\documentclass[a4ppaper,11pt]{amsart}

\usepackage{enumerate}
\usepackage[latin1]{inputenc}
\usepackage{amsmath}
\usepackage{amsthm}
\usepackage{latexsym}
\usepackage{amssymb}
\usepackage{amsmath}
\usepackage{comment}
\usepackage{bm}
\usepackage{mathrsfs}
\usepackage{multicol}
\usepackage{abstract, lipsum}
\usepackage{etoolbox}
\usepackage{titlesec}
\usepackage{secdot}
\usepackage[all]{xy}
\usepackage{calc}
\usepackage{multicol}

\newtheorem{thm}{Theorem}[section]

\newtheorem{defi}{Definition}[section]
\newtheorem{lem}{Lemma}[section]
\newtheorem{cor}{Corollary}[section]

\newtheorem{rem}{Remark}[section]
\theoremstyle{notation}
\newtheorem*{notation}{Notation}

\newcommand{\R}{\mathbb{R}}
\numberwithin{equation}{section}

\newcommand{\eps}{\varepsilon}

\newcommand{\wto}{\rightharpoonup}

\makeatletter
\patchcmd{\@settitle}{\uppercasenonmath\@title}{}{}{}
\patchcmd{\@setauthors}{\MakeUppercase}{}{}{}
\patchcmd{\section}{\scshape}{}{}{}
\makeatother

\titleformat{\section}
 {\large\bfseries}{\thesection}{1em}{}

\usepackage[textwidth=138mm,
textheight=215mm,
left=30mm,
right=30mm,
top=25.4mm,
bottom=25.4mm,
headheight=2.17cm,
headsep=4mm,
footskip=12mm,
 heightrounded,]{geometry}

\usepackage[colorlinks=true,urlcolor=blue,
citecolor=black,linkcolor=black,linktocpage,pdfpagelabels,
bookmarksnumbered,bookmarksopen]{hyperref}
\usepackage[hyperpageref]{backref}

\pagestyle{plain}

\makeatletter
\newcommand{\leqnomode}{\tagsleft@true}
\newcommand{\reqnomode}{\tagsleft@false}
\makeatother

\begin{document}

\title[]{Concentration Phenomenon of Semiclassical States to Reaction-Diffusion Systems}

\author{ {\normalsize{Tianxiang Gou}}${}^{a \ast}$, {\normalsize{Zhitao Zhang}}${}^{b,c,d}$\bigskip \\
${}^{a}${\it School of Mathematics and Statistics, Xi'an Jiaotong University, \\
Xi'an, Shaanxi 710049, China} \smallskip \\
${}^{b}${\it School of Mathematical Sciences, Jiangsu University, Zhenjiang, Jiangsu 212013, China} \smallskip \\
${}^{c}${\it HLM, Academy of Mathematics and Systems Science, Chinese Academy of Sciences, \\
Beijing 100190, China} \smallskip \\
${}^{d}${\it School of Mathematical Sciences, University of Chinese Academy of Sciences,\\
Beijing 100049, China} 
}

\thanks{ {\it E-mail addresses}: tianxiang.gou@xjtu.edu.cn (T. Gou), zzt@math.ac.cn (Z. Zhang)
\newline \indent Statements and Declarations. The author declares that there are no conflict of interests.
\newline \indent The work is supported by the National Key R \& D Program of China(2022YFA1005600), the National Science Foundation of China (Nos.12031015 and 12101483) and the Postdoctoral Science Foundation of China (No. 2021M702620). The authors would like to thank warmly the referees for the valuable suggestions and comments to improve the manuscript.}

\maketitle

\begin{abstract}

\vspace{0.2cm}

In this paper, we consider concentration phenomenon of semiclassical states to the following $2M$-component reaction-diffusion system in $\R \times \R^N$,
\begin{align*}
\left\{
\begin{aligned}
\partial_t u &=\eps^2 \Delta_x u-u-V(x)v + \partial_v H(u, v),\\
\partial_t v &=-\eps^2 \Delta_x v+v + V(x)u - \partial_u H(u, v),
\end{aligned}
\right.
\end{align*}
where $M \geq 1$, $N \geq 1$, $\eps>0$ is a small parameter, $V \in C^1(\R^N, \, \R)$, $H \in C^1(\R^M \times \R^M, \, \R)$ and $(u, v): \R \times \R^N \to \R^M \times \R^M$.
It is proved that there exist semiclassical states concentrating around the local minimum points of $V$ under mild assumptions. The approach is variational, which is mainly based upon a new linking-type argument, iterative techniques and interior estimates for nonlinear parabolic equations.
\medskip

{\it Keywords:} Concentration phenomenon; Semiclassical states; Reaction-diffusion system; Variational methods. \medskip

{\it 2010 MSC:} Primary: 35B25, 35A15; Secondary: 35K40, 35K57, 35B38.

\end{abstract}

\reqnomode

\section{Introduction}
In this paper, we are concerned with concentration phenomenon of semiclassical states to the following $2M$-component reaction-diffusion system in $\R \times \R^N$,
\begin{align} \label{system}
\left\{
\begin{aligned}
\partial_t u &=\eps^2 \Delta_x u-u-V(x)v + \partial_v H(u, v),\\
\partial_t v &=-\eps^2 \Delta_x v+v + V(x)u - \partial_u H(u, v),
\end{aligned}
\right.
\end{align}
where $M \geq 1$, $N \geq 1$, $\eps>0$ is a small parameter, $V \in C^1(\R^N, \, \R)$, $H \in C^1(\R^M \times \R^M, \, \R)$, and $(u, v): \R \times \R^N \to \R^M \times \R^M$. The system \eqref{system} arises in a wide variety of fields such as theoretical physics, chemistry and biology. It is generally applied to model the time variation of chemical concentrations due to reaction and diffusion. In such a system, $u$ and $v$ stand for chemical concentrations, the function $V$ describes a relative spatial distribution of chemical potential, and the nonlinear terms determined by the function $H$ represent external physico-chemical force, which govern dynamics of the system. The parameters $\eps^2$ and $-\eps^2$ are diffusion coefficients setting the pace of diffusion for chemicals $u$ and $v$, respectively. When diffusion coefficient is negative, which represents a phenomenon referred to as reverse diffusion. This often happens during phase separation, a situation where the transport of particles in a medium occurs towards regions of higher concentration. In addition, $\eps^2 \Delta_x u$ and $-\eps^2 \Delta_x v$ are called diffusion term and inverse diffusion term, respectively. The diffusion term specifies that $u$ increases in proportion to $\Delta_x u$, which indicates that when the quantity of $u$ is higher in neighboring areas,  $u$ will increase. Contrarily, the inverse diffusion term specifies that $v$ decreases in proportion to $\Delta_x v$, which indicates that when the quantity of $v$ is higher in neighboring areas,  $v$ will decrease. The nonlinearites $\partial_v H$ and $-\partial_u H$ are called reaction terms modeling chemicals reaction with a replenishment and diminishment, respectively. For more information regarding \eqref{system}, we refer the readers to \cite{Gr, Mu, Sm, Tur} and references therein.

We now recall some study in connection with \eqref{system}. To our knowledge, there are relatively few papers considering systems similar to \eqref{system}, most of which are indeed devoted to the study of the existence of solutions. In \cite{BrNi}, by using Schauder's fixed point theorem, the authors investigated the existence of positive solutions to the following $2$-component parabolic system in $(0, T) \times \Omega$,
\begin{align*}
\left\{
\begin{aligned}
\partial_t u &=\Delta_x u- v^5+f(x),\\
\partial_t v &=-\Delta_x v- u^3 + g(x),
\end{aligned}
\right.
\end{align*}
where $\Omega \subset \R^N$ is a bounded domain, $f, g \in L^{\infty}(\Omega)$, and $u(t, x)=v(t, x)=0$ for any $(t, x) \in (0, T) \times \partial \Omega$, $u(0, x)=v(T, x)=0$ for any $x \in \Omega$. Later, in \cite{ClFeMi}, via variational methods, the authors proved the existence of classical periodic and homoclinic solutions to the unbounded Hamiltonian system below set in $\R \times \Omega$,
\begin{align*}
\left\{
\begin{aligned}
\partial_t u &=\Delta_x u + |v|^{q-2}v,\\
\partial_t v &=-\Delta_x v - |u|^{p-2}u,
\end{aligned}
\right.
\end{align*}
where $\Omega \subset \R^N$ is a smooth bounded domain, $pq >1$, and $u(t, x)=v(t, x)=0$ for any $(t, x) \in \R \times \partial \Omega$.
Furthermore, in \cite{BaDi, Ding},  by establishing proper variational frameworks, the authors established the existence of homoclinic solutions to the following $2M$-component infinite dimensional Hamiltonian system in $\R \times \R^N$,
\begin{align*}
\left\{
\begin{aligned}
\partial_t u &=\Delta_x u-V(x)v + \partial_v H(t, x, u, v),\\
\partial_t v &=- \Delta_x v+ V(x)u - \partial_u H(t, x, u, v),
\end{aligned}
\right.
\end{align*}
where $V: \R^N \to \R$ is $1$-periodic in $x_j$ for any $j=1, \cdots, N$. We also refer the readers to \cite{DiLuWi} concerning the existence and multiplicity of homoclinic solutions to $2M$-component diffusion equations in $\R \times \Omega$, where $\Omega=\R^N$ or $\Omega \subset \R^N$ is a smooth bounded domain.

Regarding the further study related to \eqref{system}, it is worth mentioning \cite{DX, DX19}, where the authors discussed concentration of semiclassical states to \eqref{system} and proved there exist semiclassical states concentrating around the local minimum points of $V$. In fact, so far we are only aware of \cite{DX, DX19} considering this topic to \eqref{system}. 
The purpose of the present paper is to deeply explore the concentration of solutions to \eqref{system} under different context.

The research of the concentration of semiclassical states to nonlinear Schr\"odinger-type equations has attracted much attention in recent decades, there already exists a great deal of literature. However, much less is known to \eqref{system}. By the well-known Lyapunov-Schmidt reduction technique, the authors in \cite{FlWe} first proved that there exists a single spike semiclassical state to the following equation with $N=1$ and $f(w)=|w|^2 w$,
\begin{align} \label{NLS}
-\eps^2 \Delta w + V(x) w = f(w) \quad \mbox{in} \,\, \R^N.
\end{align}
It also turns out that the solution concentrates around any given non-degenerate critical point of the potential $V$. The result was extended by the author in \cite{Oh1, Oh2} to the case $N \geq 2$ and $f(w)=|w|^{p-2}w$ for $2 < p < 2^*$.
Afterwards, utilizing minimax arguments, the author in \cite{Ra} considered the existence of semiclassical states to \eqref{NLS} under the assumption
\begin{align} \label{glovp}
\inf_{x \in \R^N} V(x)<\lim_{|x| \to \infty} V(x).
\end{align}
In \cite{Wang}, the author further addressed that there exist semiclassical states to \eqref{NLS} concentrating around the global minimum points of the potential $V$. Subsequently, in \cite{ByJe, ByJeTa, ByWa1, ByWa2, deFe, deFe1, MoVa1}, the concentration of semiclassical states to \eqref{NLS} around the local minimum points of the potential $V$ was discussed under the assumption
\begin{align} \label{locvp}
\inf_{x \in \Lambda } V(x) < \inf_{x \in \partial \Lambda} V(x),
\end{align}
where $\Lambda \subset \R^N$ is a bounded domain. We also refer the readers to \cite{AmBaCi, ByTa1, ByTa2, CadoMo, ChLiWa, ChWa, DadeWe, FiIkJo, MoVa2, RuVa} and references therein for the relevant survey.

\subsection*{Statement of main result.} In order to state our main result, we now show assumptions imposed on $V$ and $H$. For the potential $V$, we make the following assumptions,
\begin{enumerate}
\item [($V_1$)] $V \in C^1(\R^N, \R)$ and $\|V\|_{\infty}:=\sup_{x \in \R^N} |V(x)| <1$;
\item [($V_2$)] there exists a bounded domain $\Lambda \subset \R^N$ with smooth boundary $\partial \Lambda$ such that
$$
\nabla V(x) \cdot {\bf{n}}(x) >0 \quad \mbox{for any} \ \, x \in \partial \Lambda,
$$
where ${\bf{n}}(x)$ denotes the unit outward normal vector at $x \in \partial \Lambda$.
\end{enumerate}

\begin{rem}
Note that $(V_2)$ is satisfied if $V$ has an isolated local minimizers set, i.e., $V$ has a locally trapping potential well. Such an assumption on $V$ is more general than the usual ones \eqref{glovp} and \eqref{locvp}, which indeed makes our study different from the one conducted in \cite{DX}.
\end{rem}


For the  nonlinear function $H$, we assume that
$$
H(z)=G(|z|):=\int_{0}^{|z|}g(s)s \, ds \quad \mbox{for any} \,\, z \in \R^M \times \R^M,
$$
where $g$ fulfills the following assumptions,
\begin{enumerate}
\item [$(H_1)$] $g \in C(\R^+, \, \R^+) \cap C^1((0, \infty), \, \R^+)$ and $g(0)=0$, where $\R^+:=[0, \infty)$;
\item [$(H_2)$] there exist $c>0$ and $2 <p < 2(N+2)/N$ such that $g(s) \leq c(1 + s^{p-2})$ for any $ s \geq 0$;
\item [$(H_3)$] $\lim_{s \to \infty}\frac{G(s)}{s^2}=\infty$;
\item [$(H_4)$] $g$ is nondecreasing on $[0, \infty)$.
\end{enumerate}

\begin{rem}
Note that, in our case,  there holds that
$$
\frac 12 g(s)s^2 - G(s) \geq 0 \quad \mbox{for any} \,\, s \geq 0.
$$
The classical Ambrosetti-Rabinowitz condition is not required.  The assumptions on the nonlinear function $H$ are rather weak to guarantee the existence of ground states to \eqref{system}.
\end{rem}

It is simple to see that the assumptions $(H_1)$-$(H_4)$ are satisfied by a large class of functions. Two typical examples are $g(s)=\ln(1 +s)$ and $g(s)=s^{p-2}$ for any $2 <p < 2(N+2)/N$ and $s \geq 0$.

Let us next fix some notations. Under the assumption $(V_2)$, the set of critical points of $V$ is defined by
\begin{align} \label{defv}
\mathcal{V}:=\{x \in \Lambda: \nabla V(x)=0\}.
\end{align}
Clearly, $\mathcal{V}$ is a nonempty compact subset of $\Lambda$. Without loss of generality, we shall assume that $0 \in \mathcal{V}$. For any set $\Omega \subset \R^N$, $\eps>0$ and $\delta >0$, we define that
$$
\Omega_{\eps}:=\left\{x \in \R^N: \eps x \in \Omega\right\},
$$
and
$$
\Omega^{\delta}:=\left\{x \in \R^N: \mbox{dist}(x, \, \Omega):=\inf_{y \in \Omega} |x-y| < \delta\right\}.
$$

The main result of this paper reads as follows.

\begin{thm} \label{theorem}
Suppose that $(V_1)$-$(V_2)$ and $(H_1)$-$(H_4)$ hold, then there exists a constant $\eps_0>0$ such that, for any $0<\eps <\eps_0$, \eqref{system} admits a ground state $z_{\eps}:=(u_{\eps}, v_{\eps})$ satisfying that, for any $\delta>0$, there exist $c=c(\delta)>0$ and $C=C(\delta)>0$ such that
$$
\left|z_{\eps}(t, x) \right| \leq C \, \textnormal{exp}\left(-\frac{c\, \textnormal{dist}\left(x, \mathcal{V}^{\delta}\right)}{\eps}\right).
$$
\end{thm}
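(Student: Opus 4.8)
My plan is to rescale, set up the strongly indefinite variational formulation, produce a ground state of a penalized problem by a generalized Nehari--Pankov (linking) argument, locate its concentration set inside $\mathcal V$ by comparing energies with the autonomous limit problems, and obtain the decay by a weighted Moser iteration. First, substituting $z(t,x)\mapsto z(t,\eps x)$ turns \eqref{system} into
\begin{align*}
\partial_t u=\Delta_x u-u-V(\eps x)\,v+\partial_v H(u,v),\qquad \partial_t v=-\Delta_x v+v+V(\eps x)\,u-\partial_u H(u,v),
\end{align*}
and a solution of this rescaled system exponentially concentrated around $\mathcal V_\eps:=\{x:\eps x\in\mathcal V\}$ yields, after undoing the scaling, the bound in the statement. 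Homoclinic solutions of the rescaled system are the critical points of the action
\begin{align*}
\mathcal A_\eps(z)=\int_{\R\times\R^N}\Big(v\cdot\partial_t u+\nabla_x u\cdot\nabla_x v+u\cdot v+\tfrac12 V(\eps x)\big(|u|^2+|v|^2\big)\Big)-\int_{\R\times\R^N}H(z),
\end{align*}
whose quadratic part is represented, after the Fourier transform in $(t,x)$, by the Hermitian symbol $\left(\begin{smallmatrix}V(\eps x)&\overline\mu\\ \mu&V(\eps x)\end{smallmatrix}\right)$ with $\mu=1+|\xi|^2+i\tau$; since $|\mu|\ge1$, assumption $(V_1)$ gives a spectral gap $(-a,a)$, $a:=1-\|V\|_\infty>0$, for the self-adjoint operator $\mathcal L_\eps$ it defines. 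I would work in $E:=\mathcal D(|\mathcal L_\eps|^{1/2})=H^{1/2}(\R,L^2(\R^N))\cap L^2(\R,H^1(\R^N))$ (componentwise), which embeds continuously into $L^q(\R\times\R^N,\R^{2M})$ for $2\le q\le 2(N+2)/N$ — exactly the range in $(H_2)$, so $\mathcal A_\eps\in C^1$ — and split $E=E^+\oplus E^-$ along $\mathrm{sign}\,\mathcal L_\eps$, so that $\mathcal A_\eps(z)=\tfrac12\|z^+\|^2-\tfrac12\|z^-\|^2-\int H(z)$.

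\textbf{Penalization, linking level, existence.} Since $V$ is not coercive I would modify $H$ in del Pino--Felmer fashion: outside $\Lambda_\eps=\{x:\eps x\in\Lambda\}$ replace $g$ by a nonlinearity bounded by $a/2$, while keeping inside $\Lambda_\eps$ the monotonicity $(H_4)$, the superquadraticity $(H_3)$, and the pointwise inequality $\tfrac12 g(s)s^2-G(s)\ge0$ of the remark; call $\Phi_\eps$ the modified action. These structural properties place us in the Szulkin--Weth setting: each $z\in E^+\setminus\{0\}$ admits a unique maximizer $\hat m_\eps(z)$ of $\Phi_\eps$ on $E^-\oplus\R^+z$, the map $\hat m_\eps$ is a homeomorphism from the unit sphere of $E^+$ onto the Nehari--Pankov set $\mathcal N_\eps$, and
\begin{align*}
c_\eps:=\inf_{\mathcal N_\eps}\Phi_\eps=\inf_{z\in E^+\setminus\{0\}}\ \max_{w\in E^-\oplus\R^+z}\Phi_\eps(w)>0
\end{align*}
is a ground-state critical value of $\Phi_\eps$; this min--max identity (which may equivalently be cast as a Kryszewski--Szulkin-type linking) is the linking-type argument underpinning the construction. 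A Cerami sequence at level $c_\eps$ is bounded — the Ambrosetti--Rabinowitz condition being unavailable, boundedness is wrung out of $(H_3)$ and $(H_4)$ via a Jeanjean-type rescaling along the rays $\R^+z$ — and the penalization restores compactness of Cerami sequences in this noncompact, non-autonomous problem, so for every small $\eps$ one obtains a nontrivial $z_\eps$ with $\Phi_\eps'(z_\eps)=0$ and $\Phi_\eps(z_\eps)=c_\eps$.

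\textbf{Concentration.} Let $c(\mu)$ denote the ground-state level of the autonomous problem with $V\equiv\mu$ (for $|\mu|<1$ the splitting $E^\pm$ is $\mu$-independent and $\mu\mapsto c(\mu)$ is strictly increasing), and set $m:=\min_{\overline\Lambda}V$. A translated and cut-off autonomous ground state at level $c(m)$, centred at a minimum point of $V$ in $\Lambda$, is an admissible competitor, so $\limsup_{\eps\to0}c_\eps\le c(m)$. Conversely $z_\eps$ cannot vanish (else $c_\eps\to0$), so there are $y_\eps$ with $|z_\eps(\cdot,y_\eps)|$ bounded below; as the truncated problem has no nontrivial solution outside $\overline\Lambda$, $\bar x:=\lim\eps y_\eps\in\overline\Lambda$ and $z_\eps(\cdot,\cdot+y_\eps)$ converges to a nontrivial solution of the $V(\bar x)$-autonomous problem, whence $c(V(\bar x))\le\liminf c_\eps\le c(m)$; strict monotonicity of $c(\cdot)$ gives $V(\bar x)\le m$, hence $V(\bar x)=m$. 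Now $(V_2)$ forces $m<\min_{\partial\Lambda}V$ (moving inward from $\partial\Lambda$ strictly lowers $V$), so $\bar x\in\mathrm{int}\,\Lambda$ and $\nabla V(\bar x)=0$, i.e. $\bar x\in\mathcal V$; the closeness of $c_\eps$ to the single-bubble value $c(m)$ excludes multi-bump profiles. Once the decay below is available, $\Phi_\eps$ and $\mathcal A_\eps$ agree along $z_\eps$, so $z_\eps$ solves the un-modified rescaled system, and an energy comparison identifies it as a ground state.

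\textbf{Decay, and the main obstacle.} Because $2(N+2)/N$ is the critical exponent for $E\hookrightarrow L^q(\R\times\R^N)$ and $(H_2)$ is strictly subcritical, a De Giorgi--Moser iteration on $|z_\eps|^2$ — using the two equations with Kato's inequality to dominate $-\Delta_x|z_\eps|$ and handling the $\pm\partial_t$ terms by testing against cutoffs in $t$ — yields $\|z_\eps\|_{L^\infty(\R\times\R^N)}\le C$ uniformly in $\eps$, and the concentration step upgrades this to $|z_\eps|\to0$ uniformly away from $\mathcal V_\eps$. Where $|z_\eps|$ is small one has $g(|z_\eps|)\le a/2$, so testing the equations against $e^{2\kappa\varphi}z_\eps^\pm$ with $\varphi=\mathrm{dist}(\cdot,\mathcal V_\eps^{\delta/\eps})$ and $\kappa$ small, and absorbing the nonlinear term by the spectral gap, gives $\int e^{2\kappa\varphi}|z_\eps|^2\le C$; re-running the Moser iteration with this weight turns it into $|z_\eps(t,x)|\le C e^{-\kappa\,\mathrm{dist}(x,\mathcal V_\eps^{\delta/\eps})}$, and undoing $z_\eps(t,x)\mapsto z_\eps(t,x/\eps)$ gives the claimed bound. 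I expect the crux to be the second and third steps: running the localized generalized Nehari--Pankov/linking construction for this Hamiltonian-type, strongly indefinite, non-autonomous functional, proving that Cerami sequences are bounded without Ambrosetti--Rabinowitz and compact after penalization (the interplay of the $E^\pm$ splitting with the $\eps$-dependent truncation), and pinning the concentration point inside $\mathcal V$ using only the boundary condition $(V_2)$; by comparison the $L^\infty$ and decay estimates, while technical, follow standard weighted-iteration patterns.
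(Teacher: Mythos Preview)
Your overall architecture matches the paper's: rescale, split $E=E^+\oplus E^-$ via the spectrum of $L$, penalize outside $\Lambda_\eps$, find a ground state of the modified functional at a min--max level, then show concentration and decay. But two of your key mechanisms do not hold under the stated hypotheses, and your concentration argument is genuinely different from the paper's.

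\textbf{Existence step.} You invoke the Szulkin--Weth reduction: a \emph{unique} maximizer $\hat m_\eps(z)$ of $\Phi_\eps$ on $\R^+z\oplus E^-$ and a homeomorphism $\hat m_\eps$ from the unit sphere of $E^+$ onto $\mathcal N_\eps$. That machinery needs $s\mapsto f_\eps(x,s)$ strictly increasing (or an equivalent strict convexity). Here $(H_4)$ only gives $g$ nondecreasing, and after the truncation $\tilde g(s)=\min\{g(s),\mu\}$ the penalized nonlinearity is eventually \emph{constant}, so uniqueness of the maximizer can genuinely fail. The paper confronts this explicitly: it proves only that a maximizer exists (Lemma~\ref{exist}) and that any $z\in\mathcal N$ is a global maximizer on $\hat E(z)$ (Lemma~\ref{compare}), then builds a deformation/linking argument (Lemma~\ref{deformation}) in the $\mathcal T$-topology that works without uniqueness and without $\mathcal T$-upper semicontinuity of $\Phi_\eps$, combining ideas from \cite{Med}, \cite{KrSz}, \cite{ChWang}. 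Your assertion that ``the penalization restores compactness of Cerami sequences'' is also false: the problem is invariant under $t$-translations, so $\Phi_\eps$ does \emph{not} satisfy the Cerami condition (the paper states this explicitly). What the paper obtains is a nontrivial weak limit after a suitable $t$-shift (Lemma~\ref{existence}), which together with Fatou and \eqref{notar} suffices to catch the ground-state level.

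\textbf{Concentration step.} Your route---compare $c_\eps$ with the autonomous level $c(\mu)$, use monotonicity of $\mu\mapsto c(\mu)$, deduce $V(\bar x)=\min_{\overline\Lambda}V$---is the scheme of \cite{DX} under $(\tilde V_2)$, and you are right that $(V_2)$ implies $\min_\Lambda V<\min_{\partial\Lambda}V$. But you would still owe a proof that the autonomous problem at each admissible $\mu$ has a ground state, that $c(\mu)$ is strictly increasing under only $(H_1)$--$(H_4)$, and that the penalized limit at $y_k$ really sees the unmodified nonlinearity (i.e.\ $y_k\in\Lambda$). The paper bypasses all of this: it runs a full profile decomposition (Lemma~\ref{sl}) allowing several bubbles, then identifies each center as a critical point of $V$ by a local Pohozaev-type identity (Lemma~\ref{concen})---multiply the equation by $\nabla V(\eps y_{\eps,k_0})\cdot\nabla w_{\eps_n}$ on a ball of radius $\sim\delta\eps_n^{-1}$, estimate all boundary terms by the annular exponential decay (Lemma~\ref{decay}), and read off $\nabla V(y_{k_0})=0$. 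No comparison with autonomous ground-state energies is used.

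\textbf{Decay.} Your weighted-Moser plan is in the right spirit; the paper instead reduces to the scalar heat operator via $\hat z_\eps(t,x)=(u_\eps(t,x),v_\eps(-t,x))$, uses Corollary~\ref{estimate} (parabolic interior $L^q\to L^\infty$), and an elementary annular iteration with cutoffs $\psi_{n,l}$ to get geometric decay of $\int(|\nabla\hat z|^2+|\hat z|^2)$ over shrinking shells, then bootstraps to pointwise exponential decay (Lemmas~\ref{decay}, \ref{expdecay}).
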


The result provides a characterization of concentration phenomenon of chemicals. It reveals that chemicals concentrate around the local minimum points of the spatial distribution of chemical potential for small diffusion coefficients.

In \cite{DX}, the authors investigated the concentration of semiclassical states to \eqref{system} under the assumption \eqref{locvp} instead of $(V_2)$, under which the associated autonomous systems do exist and they play an essential role in the discussion.
However, in our situation, under the assumption $(V_2)$, there do not exist the associated autonomous systems to use, which makes the abstract critical point theorem obtained in \cite{DX} cannot be adapted to our problem. For this reason, we establish a new linking-type argument to derive the desired existence result. Moreover, our assumptions on the nonlinearity are weaker than the ones in \cite{DX}.
And we derive the exponential decay of semiclassical states to \eqref{system}, which was not given in \cite{DX}.

We now sketch the outline of the proof of Theorem \ref{theorem}. To begin with, by making a change of variable $ x \to \eps x$, we introduce an equivalent system \eqref{system1}.
It is standard that any solution to \eqref{system1} corresponds to a critical point of the underlying energy functional $J_{\eps}$ defined by \eqref{defof}. Note that the functional $J_{\eps}$ does not satisfy the desired compactness condition. Thus we are unable to directly rely on the  functional $J_{\eps}$ to seek for ground states to \eqref{system1}. Thereby, a modified energy functional $\Phi_{\eps}$ defined by \eqref{defmf} is introduced. At this point, in order to complete the proof of this theorem, we take the following two crucial steps. 

{Step 1} : Prove that, for any $\eps>0$ small, the functional $\Phi_{\eps}$ possesses nontrivial critical points minimizing the functional $\Phi_{\eps}$ among all its critical points, which are indeed ground states to \eqref{equation}. To achieve this, we shall bring in the generalized Nehari manifold corresponding to the functional $\Phi_{\eps}$ and demonstrate the existence of minimizers to the functional $\Phi_{\eps}$ subject to the manifold. Notice that, in our scenario, for any $x \in \R^N$, the modified nonlinear function $f_{\eps}(x, \cdot)$ defined by \eqref{deff} is only nondecreasing but not strictly increasing on $[0, \infty)$. In addition, the functional $\Phi_{\eps}$ is not $\mathcal{T}$-upper semicontinuous, where the topology $\mathcal{T}$ is induced by the norm given by \eqref{deftopy}. This enables that the approaches developed in \cite{KrSz, Pan, SzWe} used to investigate the existence of solutions to strongly indefinite problems are not directly applicable to our problem. To overcome this difficulty, we need to borrow ideas from \cite{Med}, where the author succeeded in attaining the existence of ground states to strongly indefinite problem without imposing the strict monotonicity condition on the nonlinearity. However, $\mathcal{T}$-upper semicontinuity of the energy functional is required there.
Therefore, we also need to employ elements from \cite{ChXi, ChWang, GuZh, KrSz}, where the existence of solutions to strongly indefinite problems was considered without imposing $\mathcal{T}$-upper semicontinuity assumption. Let us remark that the adaption of ingredients from the existing literature to our problem is highly nontrivial, because we work in distinctive setting. At this moment, we are able to establish a new linking-type argument to our problem, see Lemma \ref{deformation}, by which the desired existence result follows necessarily.
The argument we establish is new and extends the previous ones, which holds under more general conditions and may be applicable to other problems.

{Step 2} : Prove that, for any $\eps>0$ small, ground states to \eqref{equation} decay exponentially, from which ground states to \eqref{equation} are indeed ones to \eqref{system1} with the desired decay. To attain this, we make use of the well-known Lions concentration compactness lemma, see Lemma \ref{concentration}, and the iteration techniques developed in \cite{ChWa} along with interior estimates for nonlinear parabolic equations. Let us point out that, under our circumstance, the proof of the exponential decay requires more delicate analysis, because we are concerned with $2M$-component reaction-diffusion systems instead of nonlinear elliptic equations, which are parabolic systems set on $t$-Anisotropic Sobolev spaces.

\subsection*{Structure of the paper.} The remainder of the paper is laid out as follows. In Section \ref{pre}, we shall establish the associated variational frameworks for our problem and present some crucial lemmas used frequently in our proofs. Section \ref{pf} is devoted to the proof of Theorem \ref{theorem}, which is divided into two parts. In the first part, we shall prove the existence of ground states. In the second part, we shall deduce exponential decay of ground states.

\begin{notation}
Throughout the paper, for any $1  \leq q \leq \infty$ and $n \in \mathbb{N}^+$ with $n \geq 1$, we denote by $L^q(\R^n)$ the usual Lebesgue space and denote by $W^{1, q}(\R^n)$ and $W^{2, q}(\R^n)$ the usual Sobolev spaces. We use the notations $o_n(1)$ and $o_{\eps}(1)$ for quantities which tend to zero as $n \to \infty$ and $\eps \to 0^+$, respectively. For any $T, R>0$, $B(\tau, T)$ denotes the open ball in $\R$ with center at $\tau \in \R$ and radius $T$, and $B(y, R)$ denotes the open ball in $\R^N$ with center at $y \in \R^N$ and radius $R$. Furthermore, $\partial B(y, R)$ denotes the sphere of $B(y, R)$. We write $\overline{Q}$ for the closure of a set $Q \in \R^n$. We use letters $c$ and $C$ for generic positive constants, whose values may change from line to line.
\end{notation}

\section{Preliminary results} \label{pre}

In this section, we shall present some preliminary results used to establish our main result. To begin with, by making a change of variable $ x \to \eps x$, we see that \eqref{system}  becomes
\begin{align} \label{system1}
\left\{
\begin{aligned}
\partial_t u &=\Delta_x u-u-V_{\eps}(x)v + \partial_v H(u, v),\\
-\partial_t v &=\Delta_x v-v-V_{\eps}(x)u + \partial_u H(u, v),
\end{aligned}
\right.
\end{align}
where $V_{\eps}(x):=V(\eps x)$. Set
\begin{align*}
\mathcal{J}:= \left(
\begin{array}{cc}
  0 & -I \\
  I & 0
\end{array}
\right),  \quad
\mathcal{J}_0:= \left(
\begin{array}{cc}
  0 & I \\
  I & 0
\end{array}
\right),
\quad A:=\mathcal{J}_0 \left(-\Delta_x + 1 \right),
\end{align*}
and
\begin{align}\label{defl}
L:=\mathcal{J} \partial_t + A,
\end{align}
then \eqref{system1} may be written as
$$
L z + V_{\eps}(x) z = g(|z|) z \quad \mbox{for} \,\, z:=(u, v).
$$

\subsection{Functional settings}

For any $1 \leq q  \leq \infty$, we denote by $L^q:=L^q(\R \times \R^N, \, \R^{2M})$ the usual Lebesgue space equipped with the norm $\|\cdot\|_q$. Notice that $L$ acting on $L^2$ is a self-adjoint operator with domain
$$
D(L):=W^{1, 2}\left(\R, L^2(\R^N, \R^{2M})\right) \cap L^2\left(\R, W^{2,2}\left(\R^N, \R^{2M}\right) \right).
$$

\begin{lem} \label{spectrum} \cite[Lemma 8.7]{Ding}
Assume that $(V_1)$ holds, then $\sigma(L)=\sigma_e(L) \subset \R \backslash (-1, 1)$, where $\sigma(L)$ and $\sigma_e(L)$ denote the spectrum and essential spectrum of $L$, respectively.
\end{lem}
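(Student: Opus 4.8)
The plan is to diagonalize the constant‑coefficient operator $L=\mathcal{J}\partial_t+A$ by means of the Fourier transform in $(t,x)$ and to read off the spectrum from its matrix symbol. (Observe that $(V_1)$ is irrelevant to the assertion about $L$ itself; it is recorded here only because the perturbed operator $L+V_\eps$ is the one that matters in later sections.) One works throughout on the complexification of $L^2=L^2(\R\times\R^N,\R^{2M})$, which does not affect the spectrum, and uses that, as already observed, $L$ is self‑adjoint on $L^2$ with domain $D(L)$; concretely, $\mathcal{J}\partial_t$ is self‑adjoint because $\mathcal{J}^\top=-\mathcal{J}$ and $\partial_t^\ast=-\partial_t$, while $A=\mathcal{J}_0(-\Delta_x+1)$ is self‑adjoint because $\mathcal{J}_0$ is bounded, symmetric and commutes with the positive self‑adjoint operator $-\Delta_x+1$, and the two summands commute in the strong resolvent sense since they act on different variables.

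Next, I would let $\cF$ be the Fourier transform in $(t,x)$, with $\tau$ dual to $t$ and $\xi\in\R^N$ dual to $x$; it is unitary on $L^2(\R\times\R^N,\Cc^{2M})$, and $\cF L\cF^{-1}$ is the operator of multiplication by the Hermitian matrix symbol $\widehat L(\tau,\xi)=i\tau\,\mathcal{J}+(1+|\xi|^2)\,\mathcal{J}_0$, which in block form reads
\[ \widehat L(\tau,\xi)=\begin{pmatrix} 0 & (1+|\xi|^2-i\tau)\,I_M\\ (1+|\xi|^2+i\tau)\,I_M & 0 \end{pmatrix}, \]
with natural maximal domain $\{\zeta\in L^2:\widehat L(\cdot,\cdot)\zeta\in L^2\}=\cF(D(L))$, the last equality because $|\widehat L(\tau,\xi)|$ is comparable to $1+|\tau|+|\xi|^2$. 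Since a direct computation gives $\widehat L(\tau,\xi)^2=\big((1+|\xi|^2)^2+\tau^2\big)I_{2M}$, the eigenvalues of $\widehat L(\tau,\xi)$ are $\pm\sqrt{(1+|\xi|^2)^2+\tau^2}$, each of multiplicity $M$.

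Finally, I would conclude that the spectrum of multiplication by a continuous matrix symbol is the closure of the union of the pointwise spectra, so
\[ \sigma(L)=\overline{\bigcup_{(\tau,\xi)\in\R\times\R^N}\Big\{\pm\sqrt{(1+|\xi|^2)^2+\tau^2}\,\Big\}}=(-\infty,-1]\cup[1,\infty)=\R\setminus(-1,1), \]
because $1+|\xi|^2\geq1$; in particular $\sigma(L)\subset\R\setminus(-1,1)$. Since this set has no isolated points and $L$ is self‑adjoint, the discrete spectrum $\sigma(L)\setminus\sigma_e(L)$ — isolated eigenvalues of finite multiplicity — is empty, whence $\sigma(L)=\sigma_e(L)$ (equivalently, every level set of the scalar symbol $(\tau,\xi)\mapsto\sqrt{(1+|\xi|^2)^2+\tau^2}$ is Lebesgue‑null in $\R^{N+1}$, so there are no eigenvalues at all). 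The only mildly technical point in this scheme is the identification of the domain of the multiplication operator with $\cF(D(L))$, i.e. the Fourier‑multiplier characterization of the anisotropic space $W^{1,2}(\R,L^2(\R^N,\R^{2M}))\cap L^2(\R,W^{2,2}(\R^N,\R^{2M}))$; once this is recorded the rest is the elementary linear algebra above, and I do not foresee any genuine obstacle.
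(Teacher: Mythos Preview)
Your argument is correct. The paper does not supply its own proof of this lemma; it simply cites \cite[Lemma~8.7]{Ding}, so there is no ``paper's proof'' to compare against in detail. Your Fourier-diagonalization approach is the standard one, and in fact the paper carries out essentially the same symbol computation later in the proof of Lemma~\ref{lp} (with the $2\pi$ convention), obtaining the eigenvalues $\pm\sqrt{4\pi^2|\xi_0|^2+(1+4\pi^2\sum_k|\xi_k|^2)^2}$, which is exactly your $\pm\sqrt{\tau^2+(1+|\xi|^2)^2}$ up to normalization. Your remark that $(V_1)$ plays no role in the statement about $L$ itself is also correct; the hypothesis is recorded only because it is a standing assumption in that section.
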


Let $\{E_{\lambda}\}_{\lambda \in \R}$ be the spectrum family of $L$. According to \cite[Chapter IV, Theorem 3.3]{EE}, $L$ admits the polar decomposition
\begin{align} \label{polar}
L=U|L|=|L|U,
\end{align}
where $U$ is a unitary isomorphism of $L^2$ such that $U=I-2E_0$, and $|L|$ denotes the absolute value of $L$. This, along with Lemma \ref{spectrum}, suggests that $L^2$ possesses an orthogonal decomposition
\begin{align*} 
L^2=L^+ \oplus L^{-}
\end{align*}
such that $L$ is positive definite on $L^+$ and negative definite on $L^{-}$, where
\begin{align} \label{decom}
L^\pm:=\left\{z \in L^2: Uz=\pm z \right\}.
\end{align}
In order to seek for solutions to \eqref{system1}, let us introduce $E:=D(|L|^{\frac 12})$ with the inner product
$$
\langle z_1, \, z_2\rangle:=(|L|^{\frac 12} z_1, \,|L|^{\frac 12} z_2)_2 \quad \mbox{for any} \, \, z_1, z_2 \in E,
$$
where $(\cdot, \cdot)_2$ stands for the usual inner product in $L^2$, and $|L|^{\frac 12}$ denotes the square root of $L$.  For any $z \in E$, the induced norm $\|z\|:=\langle z, \, z \rangle^{\frac 12}.$ Clearly, $E$ is a Hilbert space. By the interpolation theory in \cite{Tr}, one can see that $E=[D(L), \,L^2]_{1/2}$. 


\begin{lem} \label{embedding} \cite[Lemma 4.6]{BaDi}
Assume that $(V_1)$ holds, then $E$ is continuously embedded in $L^r$ for any $r \in [2, \infty)$ if $N \geq 1$, and for any $r \in [2, 2(N+2)/N]$ if $N \geq 2$. $E$ is compactly embedded in $L_{loc}^r$ for any $r \in [1, \infty)$ if $N \geq 1$, and for any $r \in [1, 2(N+2)/N)$ if $N \geq 2$.
\end{lem}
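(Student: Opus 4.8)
The plan is to unfold the abstract definition $E=D(|L|^{1/2})$ into a concrete anisotropic Sobolev space on $\R\times\R^{N}$ — one full derivative in the space variable $x$, half a derivative in the time variable $t$ — and then to run the Sobolev and Rellich--Kondrachov theorems adapted to the parabolic scaling. The first step is to compute $L^{2}$ explicitly: with $S:=-\Delta_{x}+1$ and $z=(u,v)$, the matrices $\mathcal{J},\mathcal{J}_{0}$ give $Lz=(-\partial_{t}v+Sv,\ \partial_{t}u+Su)$, hence $L^{2}z=(-\partial_{t}^{2}u+S^{2}u,\ -\partial_{t}^{2}v+S^{2}v)$; that is, $L^{2}=(-\partial_{t}^{2}+(-\Delta_{x}+1)^{2})$ acts componentwise. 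By the functional calculus $|L|=(-\partial_{t}^{2}+(-\Delta_{x}+1)^{2})^{1/2}$, so by Plancherel in $(t,x)$
\[
\|z\|^{2}=\bigl\||L|^{1/2}z\bigr\|_{2}^{2}=\int_{\R^{N+1}}\bigl(\tau^{2}+(|\xi|^{2}+1)^{2}\bigr)^{1/2}\,|\widehat z(\tau,\xi)|^{2}\,d\tau\,d\xi .
\]
Since $\tfrac1{\sqrt2}(a+b)\le(a^{2}+b^{2})^{1/2}\le a+b$ for $a,b\ge0$ and $|\xi|^{2}+1\simeq1+|\xi|^{2}$, the weight is comparable to $1+|\tau|+|\xi|^{2}$, so $\|z\|$ is equivalent to $\bigl(\|z\|_{2}^{2}+\||D_{t}|^{1/2}z\|_{2}^{2}+\|\nabla_{x}z\|_{2}^{2}\bigr)^{1/2}$, and $E$ coincides, with equivalent norm, with $L^{2}(\R,H^{1}(\R^{N},\R^{2M}))\cap H^{1/2}(\R,L^{2}(\R^{N},\R^{2M}))$ — this also follows from $E=[D(L),L^{2}]_{1/2}$ and the explicit form of $D(L)$ recalled above. (Assumption $(V_{1})$ enters only through Lemma~\ref{spectrum}, to keep $|L|$ bounded below so that $\langle\cdot,\cdot\rangle$ is an inner product.) From here everything reduces to Sobolev/Rellich theorems for this space, whose natural homogeneous dimension under the parabolic dilation $(t,x)\mapsto(\lambda^{2}t,\lambda x)$ is $Q:=N+2$ rather than $N$.

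For the continuous embedding I would use the Riesz potential attached to $|L|^{-1/2}$. Writing $w:=|L|^{1/2}z\in L^{2}$, we have $z=|L|^{-1/2}w=\mathcal K\ast w$, where $\mathcal K$ has Fourier transform $(\tau^{2}+(|\xi|^{2}+1)^{2})^{-1/4}\simeq(1+|\tau|+|\xi|^{2})^{-1/2}$. Its principal part is parabolically homogeneous of degree $-1$ in frequency, so $\mathcal K$ has a principal part that is parabolically homogeneous of degree $-(Q-1)=-(N+1)$; hence $|\mathcal K(t,x)|\lesssim\varrho(t,x)^{-(N+1)}$ near the origin, where $\varrho(t,x):=|t|^{1/2}+|x|$ is the parabolic gauge, for which balls of radius $r$ have measure $\simeq r^{Q}$, and $\mathcal K$ is well behaved at infinity. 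Consequently $\mathcal K$ lies in the weak Lebesgue space $L^{(N+2)/(N+1),\infty}(\R^{N+1})$, and the weak Young / Hardy--Littlewood--Sobolev inequality gives $\|z\|_{L^{2^{\ast}}}=\|\mathcal K\ast w\|_{L^{2^{\ast}}}\lesssim\|w\|_{2}=\|z\|$ with $2^{\ast}:=2Q/(Q-2)=2(N+2)/N$, the exponent bookkeeping closing precisely because $Q>2$, i.e. for every $N\ge1$. Interpolating this with the trivial bound $\|z\|_{2}\le\|z\|$ yields $E\hookrightarrow L^{r}$ for all $r\in[2,2(N+2)/N]$ (for $N=1$ the interval is $[2,6]$; it shrinks to $\{2\}$ as $N\to\infty$). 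An alternative route, which I would only sketch, is to interpolate the two factors of the description above — $L^{2}_{t}H^{1}_{x}$ embeds into $L^{2}_{t}L^{q}_{x}$ with $q$ the Sobolev conjugate of $2$ in $\R^{N}$, and one interpolates this with $H^{1/2}_{t}L^{2}_{x}$ — which is the standard energy/Strichartz computation for the heat flow and produces the same exponent.

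For the local compactness, fix a bounded parabolic cylinder $Q_{0}=B(\tau_{0},T)\times B(y_{0},R)$ and a bounded sequence $\{z_{n}\}\subset E$, and apply the Fr\'echet--Kolmogorov criterion to $\{z_{n}|_{Q_{0}}\}$ in $L^{2}(Q_{0})$. Equicontinuity of $x$-translations follows from $\int_{\R}\|z_{n}(t,\cdot+h)-z_{n}(t,\cdot)\|_{L^{2}_{x}}^{2}\,dt\le|h|^{2}\|\nabla_{x}z_{n}\|_{2}^{2}$, and equicontinuity of $t$-translations from the Fourier inequality $\|z_{n}(\cdot+\sigma,\cdot)-z_{n}\|_{2}^{2}\le2|\sigma|\,\||D_{t}|^{1/2}z_{n}\|_{2}^{2}$ (using $|e^{i\sigma\tau}-1|^{2}\le2|\sigma\tau|$); both right-hand sides tend to zero uniformly in $n$ as $|h|,|\sigma|\to0$, and a routine cutoff $\chi z_{n}$ supported in a slightly larger cylinder (multiplication by a smooth compactly supported function being bounded on $E$) handles the boundary. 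Hence, along a subsequence, $z_{n}\to z$ in $L^{2}(Q_{0})$; combining with the uniform bound $\sup_{n}\|z_{n}\|_{L^{2(N+2)/N}(Q_{0})}<\infty$ from the continuous embedding, H\"older's inequality upgrades the convergence to $L^{r}(Q_{0})$ for every $r\in[1,2(N+2)/N)$ (the range $r\in[1,2)$ being immediate on the bounded set $Q_{0}$). A diagonal argument over an exhaustion of $\R\times\R^{N}$ by such cylinders then gives the claimed compact embedding into $L^{r}_{loc}$.

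The only point where more than the textbook Sobolev/Rellich machinery is needed is the half-order time derivative: both the weak-Lebesgue estimate for $\mathcal K$ — equivalently, the sharp decay of the Bessel-type kernel of $(-\partial_{t}^{2}+(-\Delta_{x}+1)^{2})^{-1/4}$ — and the time-equicontinuity estimate $\|z(\cdot+\sigma,\cdot)-z\|_{2}\lesssim|\sigma|^{1/2}\||D_{t}|^{1/2}z\|_{2}$ must be argued through the parabolic scaling rather than by elliptic intuition, and they are what force the effective dimension $N+2$; everything else is routine. I would close by recording that the exponent $2(N+2)/N$ is optimal (it equals $6$ when $N=1$): for a fixed $\phi\in C_{c}^{\infty}$ the parabolic concentrations $z_{\lambda}(t,x):=\phi(\lambda^{2}t,\lambda x)$ satisfy $\|z_{\lambda}\|\simeq\lambda^{-N/2}$ and $\|z_{\lambda}\|_{r}\simeq\lambda^{-(N+2)/r}\|\phi\|_{r}$ as $\lambda\to\infty$, so the embedding — and a fortiori any local compactness — must fail for $r>2(N+2)/N$, and, exactly as for $H^{1}$ at its critical exponent, the compact embedding also fails at the endpoint $r=2(N+2)/N$.
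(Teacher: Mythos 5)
The paper does not prove this lemma; it is quoted from Bartsch--Ding without an argument, so there is no internal proof to compare against. On its own merits your proposal is sound and is the natural proof. The computation $L^{2}=-\partial_{t}^{2}+(-\Delta_{x}+1)^{2}$ is correct, the resulting identification $E\simeq L^{2}(\R,H^{1}(\R^{N}))\cap H^{1/2}(\R,L^{2}(\R^{N}))$ is exactly the parabolic Sobolev space of order $(1/2,1)$, the weak-Young estimate for the Riesz potential in the parabolic gauge (kernel in $L^{(N+2)/(N+1),\infty}$, homogeneous dimension $Q=N+2$) gives the endpoint $2^{\ast}=2(N+2)/N$ with the right exponent arithmetic, and the Fr\'echet--Kolmogorov argument with the $x$- and $t$-translation estimates and a cutoff correctly yields local compactness for $r<2^{\ast}$. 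One small remark: since this paper's $L=\mathcal J\partial_{t}+\mathcal J_{0}(-\Delta_{x}+1)$ does not involve $V$, assumption $(V_{1})$ is in fact irrelevant to this lemma (it is needed only for the coercivity of the full quadratic form $(Lz,z)_2+\int V_\eps|z|^2$ later on, not for the definition of $E$ or for Lemma~\ref{spectrum}).

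There is, however, a point you should flag rather than pass over in a parenthesis: your argument yields $E\hookrightarrow L^{r}$ exactly for $r\in[2,2(N+2)/N]$ for \emph{every} $N\geq1$, and your own scaling family $u_{\lambda}(t,x)=\phi(\lambda^{2}t,\lambda x)$ shows this range is sharp (both globally and for $L^{r}_{\mathrm{loc}}$, since the $u_{\lambda}$ concentrate). For $N=1$ this gives $r\in[2,6]$, whereas the lemma as quoted claims $r\in[2,\infty)$, and likewise claims $L^{r}_{\mathrm{loc}}$-compactness for all $r<\infty$ when $N=1$ and with a closed interval $[1,2(N+2)/N]$ when $N\geq2$. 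These are inconsistent with the parabolic scaling: the effective homogeneous dimension here is $N+2\geq3$, so the ``low-dimensional'' mechanism that makes $H^{1}(\R^{n})\hookrightarrow L^{\infty}$ for $n<2$ is never available, and the critical exponent for $N=1$ is $6$, not $\infty$; moreover compactness must fail at $r=2(N+2)/N$ by the usual concentration argument. Your proof therefore establishes the \emph{correct} version of the lemma (range $[2,2(N+2)/N]$ for the continuous embedding, $[1,2(N+2)/N)$ for the compact local embedding, uniformly in $N\geq1$), and exhibits a counterexample to the $N=1$ clause as literally written. Since the paper only ever invokes exponents with $2<p<2(N+2)/N$ (cf.\ $(H_{2})$), this does not affect anything downstream, but the statement should be corrected rather than reproduced.
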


From the orthogonal decomposition to $L^2$, the space $E$ admits the following associated decomposition
$$
E=E^+ \oplus E^{-},
$$
where $E^{\pm}:=E \cap L^{\pm}$. The decomposition is orthogonal with respect to $(\cdot, \cdot)_2$ and $\langle \cdot, \cdot \rangle.$ In fact, for any $z^+ \in E^+$ and $z^- \in E^-$, we know that $z^+ \in L^+$ and $z^- \in L^-$, then $(z^+, \, z^-)_2=0$. Note that
\begin{align} \label{orthogonal}
\begin{split}
\langle z^+, \,z^-\rangle &= (|L|^{\frac 12} z^+, \,|L|^{\frac 12} z^-)_2=(|L|z^+, \,z^-)_2=(|L|U z^+, \,z^-)_2 \\
&=(L z^+, \,z^-)_2=(z^+, \,Lz^-)_2=(z^+, \,|L|Uz^-)_2  \\
&=-(z^+, \,|L|z^-)_2=-(|L|^{\frac 12} z^+, \,|L|^{\frac 12} z^-)_2 \\
&=-\langle z^+, z^- \rangle,
\end{split}
\end{align}
where we used the polar decomposition and self-adjointness of $L$.  Accordingly, \eqref{orthogonal}  readily infers that $\langle z^+, z^- \rangle =0$. As a result, for any $z \in E$,
\begin{align*}
(Lz,\, z)_2&=(Lz^+ + Lz^-, \, z^+ + z^-)_2=(Lz^+, \, z^+ + z^-)_2 +(Lz^-, \, z^+ + z^-)_2\\
&=(|L|U z^+, \, z^+ + z^-)_2 +(|L|U z^-, \, z^+ + z^- )_2\\
&=(|L| z^+, \, z^+ + z^-)_2 - (|L| z^-, \, z^+ + z^- )_2 \\
&=(|L|^{\frac 12} z^+, \,|L|^{\frac 12} z^+ + |L|^{\frac 12} z^-)_2-(|L|^{\frac 12} z^-, \,|L|^{\frac 12} z^+ + |L|^{\frac 12} z^-)_2\\
&=\langle z^+, z^+ \rangle -\langle z^-, z^- \rangle \\
&=\|z^+\|^2-\|z^-\|^2,
\end{align*}
from which the energy functional associated to \eqref{system1} is given by
\begin{align}\label{defof}
J_{\eps}(z):= \frac 12 \left(\|z^+\|^2-\|z^-\|^2 \right) + \frac 12 \int_{\R} \int_{\R^N} V_{\eps}(x)|z|^2 \, dt dx - \int_{\R} \int_{\R^N} G(|z|)\, dt dx.
\end{align}
It follows from $(H_1)$ and $(H_2)$ that there exist $c_1, c_2 >0$ such that
$$
G(s)\leq c_1 s^2 + c_2 s^{p} \quad \mbox{for any} \ \, s \geq 0.
$$
Then, in view of Lemma \ref{embedding}, the functional $J_{\eps}$ is well-defined on $E$. Moreover, it is of class $C^1$, and for any $w \in E$,
$$
J_{\eps}'(z)w = \int_{\R}\int_{\R^N} Lz \cdot w \, dtdx +  \int_{\R} \int_{\R^N} V_{\eps}(x)z \cdot w \, dt dx - \int_{\R} \int_{\R^N} g(|z|) z \cdot w\, dt dx,
$$
which reveals that critical points of $J_{\eps}$ are solutions to \eqref{system1}.

In order to discuss the concentration of semiclassical states to \eqref{system}, we need to introduce a modified functional on $E$. To do this, let us first show some notations. According to $(V_2)$, we know that there is $\delta_0 >0$ such that, for any $y \in \Lambda^{\delta_0}$,  if $B(y, \, \delta_0) \backslash \Lambda \neq \emptyset$, there holds that
\begin{align} \label{vloc}
\inf_{x \in B(y, \, \delta_0) \backslash \Lambda} \nabla V(x)\cdot \nabla \mbox{dist}(x, \Lambda)>0.
\end{align}
Let $\zeta \in C^{\infty}(\R, [0, 1])$ be a cut-off function with $\zeta(t)=0$ if $t \leq 0$, $\zeta(t) >0$ if $t >0$ and $\zeta(t) =1$ if $t \geq \delta_0$, and $\zeta'(t) \geq 0$ for any $t \geq 0$. Set $\chi(x):=\zeta(\mbox{dist}(x, \, \Lambda))$ and
\begin{align}\label{tildeg}
\tilde{g}(s):=\min\left\{g(s), \, \mu\right\}, \quad \tilde{G}(s):=\int_{0}^{s} \tilde{g}(\tau) \tau \, d\tau \quad \mbox{for any} \,\, s \geq 0,
\end{align}
where $\mu:= \frac{1-\|V\|_{\infty}}{2}.$ For any $x \in \R^N$ and $s \geq 0$, we now define that
\begin{align} \label{deff}
f(x, s):=\left(1- \chi(x)\right) g(s)+\chi(x)\tilde{g}(s), \quad  F(x,s):=\int_{0}^s f(x, \tau) \tau \, d\tau,
\end{align}
then the modified functional on $E$ is introduced as
\begin{align} \label{defmf}
\hspace{-1.0cm} \Phi_{\eps}(z):= \frac 12 \left(\|z^+\|^2-\|z^-\|^2\right)+ \frac 12 \int_{\R} \int_{\R^N} V_{\eps}(x)|z|^2 \, dtdx - \int_{\R}\int_{\R^N} F_{\eps}(x, |z|) \, dtdx,
\end{align}
where $F_{\eps}(x, |z|):=F(\eps x, |z|)$. As a consequence of $(H_1)$ and $(H_2)$, we know that, for any $\gamma>0$, there exists $c_{\gamma}>0$ such that
\begin{align} \label{h1h21}
f_{\eps}(x, s) \leq \gamma  + c_{\gamma}s^{p-2} \quad \mbox{for any} \,\, x \in \R^N, s \geq 0.
\end{align}
This then indicates that
\begin{align} \label{h1h2}
F_{\eps}(x, s) \leq \frac{\gamma}{2} s^2 + \frac {c_{\gamma}}{p} s^p \quad \mbox{for any} \,\, x \in \R^N, s \geq 0.
\end{align}
Plainly, by Lemma \ref{embedding}$, \Phi_{\eps}$ is well-defined on $E$, and it is of class $C^1$. Furthermore, for any $w \in E$, we have that
$$
\Phi_{\eps}'(z)w = \int_{\R}\int_{\R^N} Lz \cdot w \, dtdx +  \int_{\R} \int_{\R^N} V_{\eps}(x)z \cdot w \, dt dx - \int_{\R} \int_{\R^N} f_{\eps}(x, |z|) z \cdot w\, dt dx,
$$
where $f_{\eps}(x, |z|):=f(\eps x, |z|)$. Thus critical points of $\Phi_{\eps}$ are solutions to the system
\begin{align} \label{equation}
L z +V_{\eps}z=f_{\eps}(x, |z|)z.
\end{align}
Additionally, from \eqref{deff} and $(H_4)$, it is easy to see that
\begin{align} \label{notar}
\frac 12 f(x,  s)s^2 - F(x, s) \geq 0 \quad \mbox{for any} \,\, x \in \R^N, s \geq 0.
\end{align}

\subsection{Some key lemmas}

In what follows, we shall present some lemmas to be used frequently throughout the paper, which play an important role in our proofs.

\begin{lem} \label{l2}
Assume that $(V_1)$ holds, then $\|z\|_2 \leq \|z\|$.
\end{lem}
\begin{proof}
Since $\sigma (L) \subset \R \backslash (-1, 1)$, see Lemma \ref{spectrum}, it then follows from the operator spectrum theory that
\begin{align*}
\|z\|^2=\langle |L|^\frac 12  z, |L|^{\frac 12} z\rangle &= \int_{1}^{\infty} |\lambda|^{\frac 12} \, d(E_{\lambda} z, z)_2 + \int_{-\infty}^{-1} |\lambda|^{\frac 12} \, d(E_{\lambda}z, z)_2 \\
& \geq \int_{1}^{\infty} d(E_{\lambda} z, z)_2 + \int_{-\infty}^{-1} d(E_{\lambda}z, z)_2 \\
&=\|z\|_2^2,
\end{align*}
and the conclusion follows.
\end{proof}

The orthogonal decomposition of $E$ induces a natural decomposition of $L^q$, and we have the following result, see also \cite{DX19}.

\begin{lem} \label{lp}
Assume that $(V_1)$ holds, then $\|z^{\pm}\|_q \leq c_q \|z\|_{q}$ for any $ 2 \leq q  \leq {2(N+2)}/N $.
\end{lem}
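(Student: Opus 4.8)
\textbf{Proof proposal for Lemma \ref{lp}.}

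The plan is to reduce the claim to the single statement that each projection $P^{\pm}\colon L^2\to L^{\pm}$, $z\mapsto z^{\pm}$, extends to a bounded operator on $L^q$ for the exponents in question, together with the corresponding projection estimate for the reflection $U=I-2E_0$. Recall from \eqref{polar} that $U$ is a unitary involution on $L^2$ and that, writing $A=\mathcal{J}_0(-\Delta_x+1)$ and $L=\mathcal{J}\partial_t+A$, the sign of $L$ is the sign of the self-adjoint operator $L$ acting on $L^2(\R\times\R^N,\R^{2M})$. So I would first express $P^{\pm}=\tfrac12(I\pm U)$ and reduce everything to showing that $U$ is $L^q$-bounded. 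The core of the argument will then be a Fourier/functional-calculus computation: passing to the Fourier transform in $t$ (variable $\tau$), $L$ becomes the family of operators $\mathcal{J}(i\tau)+A$ on $L^2(\R^N,\R^{2M})$, and $U$ is given by a matrix-valued Fourier multiplier in $\tau$ whose symbol involves $A(A^2+\tau^2)^{-1/2}$-type expressions (the polar decomposition of $i\tau\mathcal{J}+A$). One then checks that this symbol satisfies Mikhlin--H\"ormander-type bounds, so that $U$ is a bounded operator on $L^q(\R\times\R^N)$ for every $1<q<\infty$; restricting to $2\le q\le 2(N+2)/N$ is then automatic.

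An alternative, and probably cleaner, route avoids multiplier theory altogether: use the fact, implicit in the construction of the functional framework, that $P^{\pm}E=E^{\pm}\subset E$ and that (by elliptic/parabolic regularity for the operator $L$, as in the references \cite{BaDi,Ding}) $E^{\pm}$ and the projections $P^{\pm}$ interact well with the Anisotropic Sobolev scale on which $E$ sits. Concretely, by Lemma \ref{embedding} we have $E\hookrightarrow L^q$ for the relevant range of $q$; if one knows in addition that $P^{\pm}$ maps $L^q\cap E$-type spaces boundedly, one concludes $\|z^{\pm}\|_q\le c_q\|z^{\pm}\|_{E\text{-scale}}$ and then controls the latter by $\|z\|_q$ using the reverse embedding on the localized pieces. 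I would organize the final write-up around whichever of these the authors have set up earlier: since Lemma \ref{embedding} is quoted from \cite{BaDi}, the natural thing is to cite the boundedness of $U$ on $L^q$ from the same circle of ideas (it is standard for operators of Dirac/Schr\"odinger type with spectral gap) and then simply write
\[
\|z^{\pm}\|_q=\tfrac12\|(I\pm U)z\|_q\le\tfrac12\bigl(\|z\|_q+\|Uz\|_q\bigr)\le\tfrac12\bigl(1+\|U\|_{q\to q}\bigr)\|z\|_q=:c_q\|z\|_q,
\]
which is exactly the asserted inequality.

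The main obstacle is the $L^q$-boundedness of $U$: this is not a soft fact, because $U=I-2E_0$ is a spectral projection-type operator and spectral projections are generally \emph{not} bounded on $L^q$ for $q\ne 2$ in the absence of a spectral gap. Here the gap $\sigma(L)\subset\R\setminus(-1,1)$ from Lemma \ref{spectrum} is precisely what rescues the argument, since it turns the sign function $\mathrm{sgn}(\lambda)$ into a function that is smooth (indeed constant) on a neighborhood of the spectrum, so $U=\mathrm{sgn}(L)$ can be realized via a smooth functional calculus and hence as a nice Fourier multiplier. I would therefore spend the bulk of the proof making this reduction precise: conjugating by the $t$-Fourier transform, identifying the symbol of $U$, and verifying the Mikhlin condition uniformly in the spatial frequency variable (the $2M\times 2M$ matrix structure coming from $\mathcal{J},\mathcal{J}_0$ is harmless, as all the relevant matrices commute appropriately after diagonalizing $\mathcal{J}_0$). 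Everything else — the triangle inequality above, and the restriction to the stated range of exponents — is routine.
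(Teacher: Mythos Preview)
Your first approach is essentially the paper's proof: write $P^{\pm}=\tfrac12(I\pm U)$ and show that the sign operator $U$ (equivalently the projection $P$) is a Fourier multiplier bounded on $L^q$. The only difference is a simplification you overlook: instead of Fourier-transforming in $t$ alone and dealing with the operator-valued family $i\tau\mathcal{J}+A$, the paper takes the \emph{full} space--time Fourier transform in $(t,x)\in\R\times\R^N$, so that $L$ becomes multiplication by an explicit $2M\times 2M$ matrix $\hat L(\xi)$, and hence $P=\tfrac12|L|^{-1}(|L|+L)$ becomes multiplication by an explicit smooth matrix symbol $m(\xi)$. One then checks the hypotheses of the Marcinkiewicz multiplier theorem on $\R^{N+1}$ by hand, which is straightforward since all entries of $m(\xi)$ are ratios of polynomials and the square root $\lambda_1(\xi)$. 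Your partial-Fourier route would ultimately work but is harder to make precise; the full Fourier transform is the cleaner execution of the same idea.

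Your ``alternative, cleaner route'' via the embedding $E\hookrightarrow L^q$ and a ``reverse embedding on the localized pieces'' is not a proof. There is no reverse embedding that would let you control $\|z^{\pm}\|_q$ by $\|z\|_q$ through the $E$-scale, and boundedness of $P^{\pm}$ on $E$ (which is automatic since the decomposition is orthogonal there) says nothing about boundedness on $L^q$. The $L^q$-boundedness of $P^{\pm}$ genuinely requires the multiplier argument; drop this alternative.
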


In order to prove this lemma, let us introduce the definition of multiplier.

\begin{defi}
Let m be a bounded measurable function on $\R^n$, and define a linear operator $T_m$ on $L^q(\R^n) \cap L^2(\R^n)$ by
$$
{\widehat{T_m u}}(\xi):=m(\xi)\hat{u}(\xi),
$$
where $\hat{u}$ denotes the Fourier transform of $u$, and $1 \leq q \leq \infty$. We say that $m$ is a multiplier for $L^q(\R^n)$, if $T_m u \in L^q(\R^n)$ for any $u \in L^q(\R^n) \cap L^2(\R^n)$, and $T_m$ is bounded, i.e.
\begin{align} \label{samebdd}
\|T_m u\|_{L^q} \leq C \|u\|_{L^q} \quad \mbox{for any} \,\, u \in L^q(\R^n) \cap L^2(\R^n),
\end{align}
where $C>0$ is independent of $u$, and $\|\cdot\|$ denotes the norm in $L^q(\R^n)$.
\end{defi}

\begin{rem}
For any $ 1 \leq q < \infty$, by the denseness of $L^q(\R^n) \cap L^2(\R^n)$ in $L^q(\R^n)$, we know that $T_m$ has a unique bounded extension to $L^q(\R^n)$ satisfying the same inequality \eqref{samebdd} for any $ u \in L^q(\R^n)$.
\end{rem}

With this definition in hand, we are now ready to present the proof of Lemma \ref{lp} for convenience of readers.

\begin{proof} [Proof of Lemma \ref{lp}]
The proof of this lemma is inspired by the one of \cite[Proposition 2.1]{DX1}. By the definition of $L$,  in the Fourier domain $\xi:=(\xi_0, \xi_1, \cdots, \xi_N) \in \R \times \R^N$, $L$ becomes the operator of multiplication by the matrix
$$
\hat{L}(\xi):= \left(
\begin{array}{cc}
  0 & \left(-2\pi i \xi_0 + 4 \pi^2 \sum_{k=1}^N |\xi_k|^2 + 1\right) I \\
 \left (2\pi i \xi_0 + 4 \pi^2 \sum_{k=1}^N |\xi_k|^2 + 1\right) I & 0
\end{array}
\right),
$$
where $I$ is the $M \times M$ identity matrix. It is straightforward to compute that $\hat{L}(\xi)$ has two eigenvalues $\lambda_1, \lambda_2 \in \R$ with
$$
\lambda_1=\sqrt{4 \pi^2 |\xi_0|^2 + \left(1 + 4 \pi^2 \sum_{k=1}^N |\xi_k|^2\right)^2}, \quad
\lambda_2=-\sqrt{4 \pi^2 |\xi_0|^2 + \left(1 + 4 \pi^2 \sum_{k=1}^N |\xi_k|^2\right)^2}.
$$
We now denote by $P$ the projection operator on $E^+$ such that $P u=u^+$ for any $u \in E$. Note that $P$ admits the following representation,
$$
P=\frac{|L|^{-1}}{2}\left (|L| + L\right),
$$
which is a straightforward consequence of \eqref{polar} and \eqref{decom}. Consequently, in the Fourier domain, $P$ is a multiplication operator by a bounded smooth matrix-valued function $m(\xi)$, i.e.
$$
{\widehat{P u}}(\xi)=m(\xi) \hat{u}(\xi),
$$
where
$$
m(\xi):= \left(
\begin{array}{cc}
 \frac 12 I & \frac{1}{\lambda_1}\left(-\pi i \xi_0 + 2\pi^2 \sum_{k=1}^N |\xi_k|^2 + \frac 12 \right) I \\
 \frac{1}{\lambda_1} \left (\pi i \xi_0 + 2 \pi^2 \sum_{k=1}^N |\xi_k|^2 + \frac 12\right) I &\frac 12 I
\end{array}
\right).
$$
At this point, we are able to apply Marcinkiewicz multiplier theorem,  see \cite[Chapter 4, Theorem 6]{Stein}, to conclude that $P$ is a multiplier for $L^q$, which then implies that $\|u^+\|_q \leq c_q \|u\|_q$. Analogously, we can prove that $\|u^-\|_q \leq c_q \|u\|_q$. Hence the proof is completed.
\end{proof}

\begin{rem} \label{remlp}
If $q=2$,  then $\|z^{\pm}\|_2 \leq \|z\|_2$. Indeed, for any $z \in L^2$,  using the orthogonality of the decomposition in $L^2$, we obtain that
$$
\|z\|^2_2=(z, \, z)_2=(z^+ + z^-, \, z^+ + z^-)_2=(z^+, \, z^+)_2 + (z^-, \, z^-)_2=\|z^+\|_2^2 + \|z^-\|^2_2,
$$
where $z^{\pm} \in L^2$, the conclusion then follows.
\end{rem}

We next give so-called Lions' concentration compactness lemma in $E$.

\begin{lem} \label{concentration}
Let $T, R>0$. If $\{z_n\} \subset E$ is bounded, and
$$
\sup_{(\tau, \, y) \in \R \times \R^N}\int_{B(\tau, \,T)}\int_{B(y, \, R)} |z_n|^2 \, dtdx =o_n(1),
$$
then $z_n \to 0$ in $L^q$ for any $2 <q<2(N+2)/2$.
\end{lem}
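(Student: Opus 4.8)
The plan is to run the standard Lions vanishing argument adapted to the anisotropic space $E$, exploiting the two embedding regimes recorded in Lemma \ref{embedding}. First I would fix $q$ with $2<q<2(N+2)/N$ and choose an interpolation exponent $r$ strictly between $2$ and the endpoint $2(N+2)/N$ (or simply some $r>q$ when $N=1$, where $E\hookrightarrow L^r$ for all $r\in[2,\infty)$). Writing $\frac1q=\frac{\theta}{2}+\frac{1-\theta}{r}$ with $\theta\in(0,1)$, Hölder's inequality on each product cube $B(\tau,T)\times B(y,R)$ gives
$$
\|z_n\|_{L^q(B(\tau,T)\times B(y,R))}\leq \|z_n\|_{L^2(B(\tau,T)\times B(y,R))}^{\theta}\,\|z_n\|_{L^r(B(\tau,T)\times B(y,R))}^{1-\theta}.
$$
Then I would cover $\R\times\R^N$ by a lattice of such cubes with uniformly bounded overlap, raise the inequality to the power $q$, and sum: the $L^2$-factor is controlled by the vanishing hypothesis (taking a supremum over the lattice), while the $L^r$-factors, after summing, are controlled by $\|z_n\|_{L^r(\R\times\R^N)}^{(1-\theta)q}$, which is bounded via the continuous embedding $E\hookrightarrow L^r$ and the boundedness of $\{z_n\}$ in $E$. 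This yields
$$
\|z_n\|_q^q\;\lesssim\;\Big(\sup_{(\tau,y)}\|z_n\|_{L^2(B(\tau,T)\times B(y,R))}^2\Big)^{\theta q/2}\;\|z_n\|_E^{(1-\theta)q}\;=\;o_n(1),
$$
since the bracketed supremum tends to $0$ and $q(1-\theta)\cdot 1$ stays bounded, giving $z_n\to0$ in $L^q$.

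The one technical point requiring care is the covering/summation step: one needs a lattice $\{(\tau_i,y_j)\}$ such that the doubled cubes $B(\tau_i,2T)\times B(y_j,2R)$ have overlap multiplicity bounded by a dimensional constant, so that $\sum_{i,j}\|z_n\|_{L^r(Q_{ij})}^r\leq C_N\|z_n\|_{L^r(\R\times\R^N)}^r$, and the exponent bookkeeping $\frac{q}{r}\cdot(1-\theta)\geq$ is arranged so that $\ell^{q}$-summability is dominated by $\ell^{r}$-summability — this is exactly where $1-\theta<1$ and $q\le r$ matter, i.e. where choosing $q$ strictly below the Sobolev endpoint and $r$ between $q$ and the endpoint is essential. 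I expect this accounting to be the main (though routine) obstacle; everything else is Hölder plus the cited embedding.

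One should also double-check the edge cases: for $N=1$ there is no upper restriction on $r$ in Lemma \ref{embedding}, so any $r>q$ works; for $N\ge 2$ one must keep $r\le 2(N+2)/N$, which is still possible since $q<2(N+2)/N$ strictly, leaving room to pick $r\in(q,2(N+2)/N]$. (The statement of the lemma writes the hypothesis range as $2<q<2(N+2)/2$, which should read $2(N+2)/N$; the proof is insensitive to this typo.) With these choices the displayed estimate closes and the lemma follows. $\square$
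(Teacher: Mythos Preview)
Your approach is exactly what the paper intends: the authors do not give a proof at all, writing only that it ``is almost identical to the one of the classical Lions' concentration compactness lemma \cite[Lemma I.1]{Lions}, hence we omit it.'' So strategically you are aligned with the paper.

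That said, the summation step as you wrote it does not close. After pulling out the supremum of the $L^2$ factor you are left with $\sum_{i,j}\|z_n\|_{L^r(Q_{ij})}^{(1-\theta)q}$, and since $(1-\theta)q<r$ this sum is \emph{not} controlled by $\|z_n\|_{L^r(\R\times\R^N)}^{(1-\theta)q}$ via bounded overlap (the $\ell^p$ inclusion goes the wrong way). The standard fix is to arrange the local inequality so that one factor carries a summable exponent exactly: e.g.\ write
\[
\|z_n\|_{L^q(Q)}^q \le C\,\|z_n\|_{L^2(Q)}^{\theta q}\,\|z_n\|_{L^r(Q)}^{(1-\theta)q}
= C\,\|z_n\|_{L^2(Q)}^{\theta q}\,\|z_n\|_{L^r(Q)}^{(1-\theta)q-r}\,\|z_n\|_{L^r(Q)}^{r},
\]
bound the middle factor by the global $L^r$ norm (it has negative exponent only if $(1-\theta)q<r$, so instead bound it by the global norm when the exponent is positive --- more cleanly, peel off exponent $r$ and bound the remaining $\|z_n\|_{L^r(Q)}^{(1-\theta)q-r}$ only when $(1-\theta)q\ge r$; otherwise peel exponent $2$ from the $L^2$ factor). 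Equivalently, use the local Sobolev/embedding inequality for $E$ on each cube to produce a factor $\|z_n\|_{E(Q)}^{2}$, which sums to $C\|z_n\|_E^2$. Either route is routine; just be aware that the specific bookkeeping you sketched (``$\ell^q$ dominated by $\ell^r$'') is the step that fails as written. Your observation about the typo $2(N+2)/2$ versus $2(N+2)/N$ is correct.
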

\begin{proof}
The proof of this lemma is almost identical to the one of the classical Lions' concentration compactness lemma \cite[Lemma I.1]{Lions}, hence we omit it.
\end{proof}

In the following, we show two crucial lemmas from \cite{DX}.

\begin{lem} \label{regularity} \cite[Lemma A.5]{DX}
Let $V \in L^{\infty}(\R \times \R^N, \, \mathcal{M}_{2M \times 2M})$ and $H: \R \times \R^N \times \R^{2M} \to \R$ satisfy
$$
\left|\nabla_{z}H(t,\, x,\, z)\right| \leq |z| + c|z|^{p-1}
$$
for some $c>0$ and $2 < p <2(N+2)/N$. If $z \in E$ is a weak solution to the system
$$
L z + V(t, x)z=\nabla_z H(t, x, z),
$$
then $z \in B^q$ for any $q \geq 2$, and
$$
\|z\|_{B^q} \leq C(\|M\|_{\infty}, \, \|z\|, \,c, \, p,\, q),
$$
where $\mathcal{M}_{2M \times 2M}$ denotes the space of $2M \times 2M$ real matrixes equipped with the usual vector norm, $L$ is defined by \eqref{defl}, and the Banach space
\begin{align} \label{defbq}
B^{q}:=W^{1, q}(\R, \,L^q(\R^N,\, \R^{2M})) \cap L^{q}(\R, \, W^{2, q}(\R^N, \, R^{2M}))
\end{align}
with the usual norm
\begin{align*}
\|z\|_{B^q}:=\left(\int_{\R}\int_{\R^N} |z|^q+|\partial_t z|^q+|\nabla z|^q + \sum_{1 \leq i,\,j \leq N} |\partial_{i, j}z|^q  \, dtdx \right)^{1/q}.
\end{align*}
\end{lem}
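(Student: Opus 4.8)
The plan is to establish the regularity statement (Lemma \ref{regularity}, i.e. \cite[Lemma A.5]{DX}) by a bootstrap argument based on the $L^p$-theory for the operator $L = \cJ \partial_t + A$, exploiting the fact that $\hat L(\xi)$ is, up to the off-diagonal block structure, a parabolic symbol of the type $\pm(2\pi i \xi_0 + 4\pi^2|\xi'|^2 + 1)$. The key point is that $L$ enjoys maximal regularity in the anisotropic scale $B^q$: if $Lz = h$ with $h \in L^q(\R\times\R^N,\R^{2M})$ and $z \in E$, then $z \in B^q$ with $\|z\|_{B^q} \le C_q \|h\|_q$ (for $1<q<\infty$). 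This is the analogue, for the indefinite parabolic operator, of the classical estimate $\|u\|_{W^{2,q}} \le C\|(-\Delta+1)u\|_q$, and it follows from the Mikhlin--H\"ormander / Marcinkiewicz multiplier theorem applied to the symbols $\xi_0/\lambda_j(\xi)$, $|\xi_k\xi_l|/\lambda_j(\xi)$, which are smooth, homogeneous of degree zero in the parabolic scaling and satisfy the requisite derivative bounds — exactly the tool already invoked in the proof of Lemma \ref{lp}.

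First I would record that by \eqref{equation}-type reasoning $z$ solves $Lz = h$ with $h := \nabla_z H(t,x,z) - V(t,x)z$, and by the growth hypothesis $|h| \le (1+\|V\|_\infty)|z| + c|z|^{p-1}$. Since $z \in E \hookrightarrow L^r$ for $r \in [2, 2(N+2)/N]$ by Lemma \ref{embedding}, we have $h \in L^{q_0}$ where $q_0 := \min\{2, 2(N+2)/(N(p-1))\}$; note $q_0 \ge 2(N+2)/(N(p-1)) > 1$ because $p < 2(N+2)/N$ forces $p - 1 < (N+2)/N \cdot \tfrac{?}{}$ — more carefully, one checks $\tfrac{2(N+2)}{N(p-1)} > 1$ precisely when $p - 1 < \tfrac{2(N+2)}{N}$, which holds. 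Thus $h \in L^{q_0}$ for some $q_0 > 1$, hence by maximal regularity $z \in B^{q_0}$. Next, the anisotropic Sobolev embedding $B^{q} \hookrightarrow L^{s}$ with $\tfrac{1}{s} = \tfrac{1}{q} - \tfrac{2}{N+2}$ (when $q < (N+2)/2$), together with $B^q \hookrightarrow L^\infty$ once $q > (N+2)/2$, lets one iterate: from $z \in L^{s_0}$ one gets $h \in L^{s_0/\max\{1,p-1\}}$, hence $z \in B^{s_0/(p-1)}$, hence $z \in L^{s_1}$ with a strictly larger exponent, and the gain per step is uniformly bounded below, so after finitely many steps $z \in L^q$ for all $q$ and then $z \in B^q$ for all $q \ge 2$, with constants depending only on $\|V\|_\infty, \|z\|_E, c, p, q$ — this last claim is tracked by keeping all embedding and multiplier constants explicit through the (finitely many) iteration steps.

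The main obstacle I anticipate is twofold. First, one must verify the maximal $L^q$-regularity of $L$ in the exact anisotropic space $B^q$ defined in \eqref{defbq}; this requires checking that each of the multipliers $\xi_0/\lambda_j$, $|\xi_k|^2/\lambda_j$, $|\xi_k\xi_l|/\lambda_j$ obtained by differentiating the solution operator satisfies the Marcinkiewicz (product-type) derivative bounds on $\R^{N+1}$, using the parabolic homogeneity $\xi_0 \sim |\xi'|^2$ — a slightly delicate but routine symbol computation, and the precise reference is \cite[Chapter 4, Theorem 6]{Stein} as already used. Second, one must make sure the bootstrap actually closes, i.e. that the exponent sequence escapes to $+\infty$ rather than stalling; this is where the strict inequality $p < 2(N+2)/N$ is essential, as it guarantees $q_0 > 1$ and a fixed multiplicative gain $\tfrac{1}{q} \mapsto \tfrac{1}{q}(p-1) - \tfrac{2}{N+2}$ that drives $1/q$ down to $0$ in finitely many steps. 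Since the statement is quoted verbatim from \cite{DX}, in the paper itself I would simply cite \cite[Lemma A.5]{DX} and, if desired, reproduce the argument above in an appendix; the contribution of the present work uses the lemma as a black box in the exponential-decay part of the proof of Theorem \ref{theorem}.
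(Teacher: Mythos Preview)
The paper does not prove this lemma at all: it is stated as a direct quotation of \cite[Lemma A.5]{DX} and used as a black box (the only place the paper alludes to its content is the one-line proof of Lemma \ref{bddbr}, which simply says ``use Lemmas \ref{regularity}--\ref{est2.7} and the iteration technique shown in the proof of \cite[Lemma A.5]{DX}''). Your final paragraph already recognises this, and your outlined bootstrap --- maximal $L^q$-regularity for $L$ via Marcinkiewicz multipliers, followed by iteration with the anisotropic Sobolev embedding $B^q\hookrightarrow L^s$ --- is exactly the argument that \cite{DX} carries out, so the approaches coincide.

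One small correction to your sketch: the choice $q_0=\min\{2,\,2(N+2)/(N(p-1))\}$ does not quite work as stated, because when $p>2+2/N$ this gives $q_0<2$ and the linear part $|z|$ need not lie in $L^{q_0}$ from the embedding $E\hookrightarrow L^r$, $r\ge 2$. The standard fix (and what \cite{DX} effectively does) is to split the right-hand side as $|h|\le C|z|\mathbf{1}_{\{|z|\le 1\}}+C|z|^{p-1}\mathbf{1}_{\{|z|>1\}}$, so that one treats the bounded region and the large-$|z|$ region in different $L^q$ spaces, or equivalently to start the bootstrap from $h\in L^2$ (using $|z|^{p-1}\le |z|^{2(N+2)/N-1}\in L^{?}$ suitably interpolated). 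With that adjustment the iteration closes exactly as you describe, and the subcriticality $p<2(N+2)/N$ is what guarantees a uniform gain at each step.
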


\begin{lem} \cite[Corollary A.4]{DX} \label{est2.7}
Let $\frac{N+2}{2}<q< \infty$, $r>0$, and set $Q_{r}(t, x):=(-r^2, t]\times B(x, r)$. If $ w \in L^q(Q_r(t, x))$ is a weak solution to
\begin{align} \label{equu}
\partial_t w-\Delta w+w= h
\end{align}
with $h \in L^q(Q_{r}(t, x))$, then, for any $0<\sigma<r$,
$$
\|w\|_{C^{\alpha, \, \alpha/2}(\overline{Q_{r-\sigma}(t, \, x)})} \leq C(N, q, r, \sigma) \left(\|h\|_{L^q(Q_r(t, \, x))}+ \|w\|_{L^q(Q_r(t,\,x))}\right),
$$
where $0<\alpha \leq 2-\frac{N+2}{q}$, and
$$
\|w\|_{C^{\alpha/2, \, \alpha}(\overline{\mathcal{Q}})}:=\sup_{(t, \,x) \in \overline{\mathcal{Q}}}|w(t, \,x)|+\sup_{\tiny{\begin{array}{c}
(t_1, \,x_1),(t_2, x_2) \in \overline{\mathcal{Q}} \\
(t_1,\, x_1) \neq (t_2, \, x_2)\end{array}}} \frac{|w(t_1,\, x_1)-u(t_2,\, x_2)|}{\mbox{d}^{\, \alpha}\left((t_1, \,x_1), \,(t_2,\, x_2)\right)},
$$
for $\mathcal{Q}:=(a, b) \times \Omega$ with $a, b \in \R$, $a<b$, $\Omega \subset \R^N$, and
$$
\mbox{d}\left((t_1, \,x_1), \,(t_2,\, x_2)\right)=\max \{|t_1-t_2|^{1/2}, \,|x_1-x_2|\}.
$$
\end{lem}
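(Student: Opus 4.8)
The plan is to establish this purely linear, scalar interior estimate in two movements: an interior $L^q$ step (parabolic Calder\'on--Zygmund / maximal regularity) that promotes the $L^q$ solution to the anisotropic Sobolev space
$$
\mathcal{W}^q(Q):=W^{1,q}\big(I,\,L^q(B)\big)\cap L^q\big(I,\,W^{2,q}(B)\big)\qquad\text{for }Q=I\times B,
$$
that is, the localized analogue of the space $B^q$ from \eqref{defbq}; followed by the anisotropic Sobolev embedding of $\mathcal{W}^q$ into the parabolic H\"older space $C^{\alpha,\alpha/2}$. Throughout, rewrite \eqref{equu} as $\partial_t w-\Delta w=h-w=:f$ and note that $f\in L^q(Q_r(t,x))$ since $w,h\in L^q(Q_r(t,x))$.

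\textbf{Step 1 (interior $\mathcal{W}^q$ bound).} Pick an intermediate radius $r-\sigma<\rho<r$ and a cut-off $\psi_1\in C^\infty(\R\times\R^N,[0,1])$ with $\psi_1\equiv1$ on $Q_{\rho}(t,x)$ and $\operatorname{supp}\psi_1\subset Q_r(t,x)$, chosen so that $\partial_t\psi_1$ is supported in the lower time-strip of $Q_r(t,x)$, away from the top slice $\{t\}\times\R^N$; this is possible because $Q_{\rho}(t,x)$ differs from $Q_r(t,x)$ only by shrinking at the bottom and laterally, so $\psi_1\equiv1$ near $\{t\}$. Then $v_1:=\psi_1w$, extended by zero to $(-\infty,t]\times\R^N$, solves
$$
\partial_t v_1-\Delta v_1+v_1=\psi_1(f+w)+w\,(\partial_t\psi_1-\Delta\psi_1)+2\operatorname{div}(w\nabla\psi_1),
$$
where the commutator $2\nabla\psi_1\cdot\nabla w$ has been put in divergence form. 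Representing $v_1$ by Duhamel's formula against the semigroup generated by $\Delta-1$ and invoking maximal $L^q$-regularity for $\partial_t-\Delta+1$ — which follows from a Mikhlin--Marcinkiewicz multiplier estimate in the space--time Fourier variable, exactly in the spirit of the argument proving Lemma \ref{lp} — the two zeroth-order forcing terms land in $\mathcal{W}^q$, while the divergence term contributes only to $L^q(I,W^{1,q})$. In particular $\nabla w\in L^q(Q_{\rho}(t,x))$ with $\|\nabla w\|_{L^q(Q_{\rho}(t,x))}\le C\big(\|f\|_{L^q(Q_r(t,x))}+\|w\|_{L^q(Q_r(t,x))}\big)$. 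Now repeat the cut-off argument with a second $\psi_2\equiv1$ on $Q_{r-\sigma}(t,x)$ and supported in $Q_{\rho}(t,x)$: since $\nabla w\in L^q(Q_{\rho}(t,x))$ is already known, the commutator $2\nabla\psi_2\cdot\nabla w$ is now an admissible $L^q$ forcing term, so maximal regularity gives $w\in\mathcal{W}^q(Q_{r-\sigma}(t,x))$ with
$$
\|w\|_{\mathcal{W}^q(Q_{r-\sigma}(t,x))}\le C\big(\|f\|_{L^q(Q_{\rho}(t,x))}+\|w\|_{L^q(Q_{\rho}(t,x))}+\|\nabla w\|_{L^q(Q_{\rho}(t,x))}\big)\le C\big(\|h\|_{L^q(Q_r(t,x))}+\|w\|_{L^q(Q_r(t,x))}\big),
$$
using $f=h-w$.

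\textbf{Step 2 (Sobolev embedding).} Because $q>(N+2)/2$, the quantity $2-(N+2)/q$ is positive, and the anisotropic Sobolev--Morrey embedding on parabolic cylinders yields $\mathcal{W}^q(Q_{r-\sigma}(t,x))\hookrightarrow C^{\alpha,\alpha/2}(\overline{Q_{r-\sigma}(t,x)})$ for every $0<\alpha\le2-(N+2)/q$ (with $\alpha\le1$), together with the associated norm inequality. Chaining this with Step 1 gives
$$
\|w\|_{C^{\alpha,\alpha/2}(\overline{Q_{r-\sigma}(t,x)})}\le C(N,q,r,\sigma)\big(\|h\|_{L^q(Q_r(t,x))}+\|w\|_{L^q(Q_r(t,x))}\big),
$$
which is the claimed bound; the dependence of $C$ on $r,\sigma$ enters through the cut-offs. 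Full details of this standard parabolic machinery may be found in \cite{DX}.

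\textbf{Main obstacle.} The only genuinely delicate point is the first-order commutator $2\nabla\psi\cdot\nabla w$ produced by localization: a priori only $\|w\|_{L^q}$ is under control, so one cannot feed this term directly into the maximal-regularity estimate. The remedy is the two-step bootstrap above — exploit the divergence structure to first extract $\nabla w\in L^q$ on an intermediate cylinder, then localize once more with the gradient now available. A minor secondary point is that the estimate is one-sided in time (valid up to the top slice $\{t\}$), which is compatible with parabolic smoothing precisely because the time-derivative of the cut-off can be confined to the lower portion of the cylinder; everything else is classical linear parabolic theory.
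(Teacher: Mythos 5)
The paper does not actually prove this lemma: it simply imports it by citing \cite[Corollary A.4]{DX}, so there is no internal proof to compare against. Your proposed argument is a correct and self-contained route to the interior estimate, built from exactly the ingredients one expects: localize, invoke maximal $L^q$-regularity for $\partial_t-\Delta+1$ (obtainable via the same space--time Marcinkiewicz multiplier device the paper uses in Lemma~\ref{lp}), and then push into parabolic H\"older space via the anisotropic Sobolev embedding, which is available precisely because $q>(N+2)/2$ makes $2-(N+2)/q$ positive. Your two-pass bootstrap for the first-order commutator is the right device: after one localization the term $2\nabla\psi_1\cdot\nabla w$ cannot be absorbed directly since only $\|w\|_{L^q}$ is under control, but writing it in divergence form and using the fact that $\partial_t v-\Delta v+v=\operatorname{div}G$ with $G\in L^q$ yields $\nabla v\in L^q$ (bounded symbol in the space--time Fourier variable) promotes $\nabla w$ to $L^q$ on an intermediate cylinder, after which a second cut-off makes the commutator an honest $L^q$ forcing and closes the argument. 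Your handling of the one-sided time structure is also correct: maximal $L^q$-regularity on $(-r^2,t]$ with zero data at the bottom is valid all the way up to the top slice $\{t\}$, and arranging $\partial_t\psi$ to vanish near $\{t\}$ is possible precisely because the parabolic cylinders $Q_\rho(t,x)$ all share that top slice. Two cosmetic remarks: the signs in your localized equation should read $w(\partial_t\psi_1+\Delta\psi_1)-2\operatorname{div}(w\nabla\psi_1)$ after moving $2\nabla\psi_1\cdot\nabla w$ into divergence form (the discrepancy is immaterial for the $L^q$ bounds); and, as you flag parenthetically, the conclusion must be read with $\alpha\le 1$, since for $\alpha>1$ the seminorm in the statement degenerates -- this is a defect of the quoted statement, not of your argument.
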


For our purpose, we require the following interior estimate.

\begin{cor} \label{estimate}
Under the assumptions of Lemma \ref{estimate}, we have that
$$
\|w\|_{C (\overline{Q_{r-\sigma}(t,\, x)})}\leq C(N, q, r, \sigma) \left(\|h\|_{L^q(Q_r(t, \, x))}+ \|w\|_{L^q(Q_r(t,\,x))}\right),
$$
where
$$
\|w\|_{C(\overline{\mathcal{Q}})}:=\sup_{(t, \,x) \in \overline{\mathcal{Q}}}|w(t, \,x)|.
$$
\end{cor}

\section{Proof of main result} \label{pf}

In this section, our aim is to prove Theorem \ref{theorem}. From now on, we always assume that $(V_1)$-$(V_2)$ and $(H_1)$-$(H_4)$ hold.

\subsection{Existence of ground states} We first consider the existence of ground states to \eqref{equation}. To do this, let us introduce the following generalized Nehari manifold associated to \eqref{equation},
$$
\mathcal{N}:=\left\{z \in E \setminus E^-: \Phi'_{\eps}(z)z=0 \,\, \mbox{and} \,\, \Phi'_{\eps}(z) w=0 \,\, \mbox{for any} \,\, w\in E^-\right\}.
$$
This type of manifold was initially proposed in \cite{Pan} and deeply studied in \cite{SzWe}. For any $z \in E \setminus E^-$, let us define that
$$
\hat{E}(z):= \R^+ z^+ + E^-.
$$
Moreover, for any $z \in E \setminus E^-$, we define a functional $\gamma_{\eps, z}: \R^+ \times E^- \to \R$ by
$$
\gamma_{\eps, z}(\tau, \, w):=\Phi_{\eps}(\tau z^+ +w).
$$
Obviously, $\gamma_{\eps, z}$ is of class $C^1$. \medskip

We next show some basic properties related to the manifold $\mathcal{N}$, which lay a foundation to establish the existence of ground states to \eqref{equation}.

\begin{lem} \label{critical}
For any $z \in E \setminus E^-$, $(\tau, \, w)$ is a critical point of $\gamma_{\eps, z}$ if and only if $\tau z^+ + w \in \mathcal{N}$, where $\tau>0$ and $w \in E^-$.
\end{lem}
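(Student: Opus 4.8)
The plan is to reduce the statement to a direct computation of the partial derivatives of $\gamma_{\eps,z}$ combined with the uniqueness of the splitting $E=E^+\oplus E^-$. Since $\Phi_{\eps}$ is of class $C^1$ on $E$ and the map $(\tau,w)\mapsto \tau z^+ + w$ is affine, the chain rule gives, writing $u:=\tau z^+ + w$,
$$
\frac{\partial}{\partial \tau}\gamma_{\eps,z}(\tau,w)=\Phi'_{\eps}(u)z^+,\qquad \frac{\partial}{\partial w}\gamma_{\eps,z}(\tau,w)[v]=\Phi'_{\eps}(u)v\ \ \text{for }v\in E^-.
$$
Because $\tau z^+\in E^+$ and $w\in E^-$, uniqueness of the decomposition forces $u^+=\tau z^+$ and $u^-=w$; moreover $z\notin E^-$ gives $z^+\neq 0$, hence $u^+\neq 0$ and $u\in E\setminus E^-$. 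So it remains to match the vanishing of these two partial derivatives with the two defining relations of $\mathcal{N}$.

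For the forward implication I would argue as follows. If $(\tau,w)$ with $\tau>0$ is a critical point of $\gamma_{\eps,z}$, then $\Phi'_{\eps}(u)v=0$ for every $v\in E^-$ — which is exactly the second condition defining $\mathcal{N}$ — and, taking in particular $v=w=u^-$, $\Phi'_{\eps}(u)u^-=0$; also $\Phi'_{\eps}(u)z^+=0$, so $\Phi'_{\eps}(u)u^+=\tau\,\Phi'_{\eps}(u)z^+=0$, and adding the last two identities yields $\Phi'_{\eps}(u)u=0$. Hence $u=\tau z^+ + w\in\mathcal{N}$. Conversely, if $u:=\tau z^+ + w\in\mathcal{N}$ with $\tau>0$ and $w\in E^-$, then the relation $\Phi'_{\eps}(u)v=0$ for all $v\in E^-$ is precisely the vanishing of the $w$-derivative and, with $v=w$, gives $\Phi'_{\eps}(u)u^-=0$; combining with $\Phi'_{\eps}(u)u=0$ we get $\Phi'_{\eps}(u)u^+=0$, i.e. $\tau\,\Phi'_{\eps}(u)z^+=0$, and dividing by $\tau>0$ shows the $\tau$-derivative vanishes as well. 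Thus $(\tau,w)$ is a critical point of $\gamma_{\eps,z}$.

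There is no genuine difficulty in this lemma; the only points that need a little care are that $\R^+=[0,\infty)$, so the hypothesis $\tau>0$ is used both to treat $\tau$ as an interior variable (so that being a critical point truly means the $\tau$-derivative vanishes) and to divide by $\tau$, and that $z\notin E^-$ is what places $u$ in $E\setminus E^-$. All the displayed identities rely only on the orthogonal direct-sum structure of $E=E^+\oplus E^-$ established in Section \ref{pre}.
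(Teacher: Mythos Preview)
Your proof is correct and follows essentially the same route as the paper: compute the partial derivatives of $\gamma_{\eps,z}$ via the chain rule and translate their vanishing into the defining conditions of $\mathcal{N}$ using the splitting $u=\tau z^+ + w$. Your write-up is in fact slightly more careful than the paper's, since you make explicit why $\tau>0$ is needed (interior point and division) and why $u\in E\setminus E^-$.
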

\begin{proof}
Observe that
\begin{align} \label{derive}
\begin{split}
\frac{\partial}{\partial \tau}\gamma_{\eps, z}(\tau, w)&=\Phi'_{\eps}(\tau z^+ + w) z^+,\\
\frac{\partial}{\partial w}\gamma_{\eps, z}(\tau, w) h&=\Phi'_{\eps}(\tau z^+ + w) h \quad \mbox{for any} \, \, h \in E^-.
\end{split}
\end{align}
If $(\tau, w)$ is a critical point of $\gamma_{\eps, z}$ for some $\tau>0$ and $w \in E^-$, then \eqref{derive} implies that
\begin{align} \label{derive1}
\Phi'_{\eps}(\tau z^+ + w) z^+=\Phi'_{\eps}(\tau z^+ + w) h=0 \quad \mbox{for any} \, \, h \in E^-.
\end{align}
This immediately gives that
\begin{align}\label{derive2}
\Phi'_{\eps}(\tau z^+ + w) (\tau z^++w)=\Phi'_{\eps}(\tau z^+ + w) h=0 \quad \mbox{for any} \, \, h \in E^-.
\end{align}
Thus $\tau z^+ + w \in \mathcal{N}$. If $\tau z^+ + w \in \mathcal{N}$ for some $\tau>0$ and $w \in E^-$,  by the definition of $\mathcal{N}$, we then know that \eqref{derive2} necessarily holds. As a consequence, \eqref{derive1} follows. Noting that \eqref{derive}, we then derive that $(\tau, w)$ is a critical point of $\gamma_{\eps, z}$, and the proof is completed.
\end{proof}

\begin{lem} \label{bdd}
For any $z \in E \setminus E^-$, there exist $\eps_{z}>0$ and $R_z>0$ such that, for any $0<\eps<\eps_z$,
\begin{align*}
\Phi_{\eps}(\xi) < 0 \quad \mbox{for any} \,\, \xi \in \hat{E}(z)\setminus B_{R_{z}}(0),
\end{align*}
where $B_{R}(0):=\{z \in E: \|z\| <R\}$.
\end{lem}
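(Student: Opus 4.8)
The plan is to argue by contradiction, in the spirit of the coercivity estimates for strongly indefinite functionals on the model subspace $\hat{E}(z)=\R^+z^++E^-$. Fix $z\in E\setminus E^-$, so that $z^+\neq 0$. If the assertion fails, there are sequences $\eps_n\to 0^+$ and $\xi_n=\tau_n z^++w_n\in\hat{E}(z)$, with $\tau_n\geq 0$ and $w_n\in E^-$, such that $\|\xi_n\|\to\infty$ and $\Phi_{\eps_n}(\xi_n)\geq 0$. After renormalising, put
$$
\bar\xi_n:=\frac{\xi_n}{\|\xi_n\|}=\bar z_n+\bar w_n,\qquad \bar z_n:=\frac{\tau_n}{\|\xi_n\|}z^+\in E^+,\quad \bar w_n:=\frac{w_n}{\|\xi_n\|}\in E^-,
$$
so that $\|\bar z_n\|^2+\|\bar w_n\|^2=1$.

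The first step extracts coercive information. Dividing $\Phi_{\eps_n}(\xi_n)\geq 0$ by $\|\xi_n\|^2$, discarding the nonnegative term $\int_\R\int_{\R^N}F_{\eps_n}(x,|\xi_n|)\,dtdx$ (nonnegativity being immediate from $(H_1)$), bounding $\big|\int_\R\int_{\R^N}V_{\eps_n}(x)|\bar\xi_n|^2\,dtdx\big|\leq\|V\|_\infty\big(\|\bar z_n\|_2^2+\|\bar w_n\|_2^2\big)$ via Remark \ref{remlp}, and applying Lemma \ref{l2}, one gets
$$
0\leq\tfrac12\big(1+\|V\|_\infty\big)\|\bar z_n\|^2-\tfrac12\big(1-\|V\|_\infty\big)\|\bar w_n\|^2 .
$$
Combined with $\|\bar z_n\|^2+\|\bar w_n\|^2=1$ and $\|V\|_\infty<1$ from $(V_1)$, this forces $\|\bar z_n\|^2\geq\frac{1-\|V\|_\infty}{2}>0$. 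Since $\bar z_n$ is a scalar multiple of the fixed vector $z^+$, writing $\bar z_n=s_n z^+$ this lower bound together with $\|\bar z_n\|\leq 1$ confines $s_n$ to a compact subinterval of $(0,\infty)$; along a subsequence $s_n\to s>0$, hence $\bar z_n\to sz^+=:\bar z\neq 0$ strongly in $E$. The sequence $\{\bar w_n\}$ being bounded, a further subsequence gives $\bar w_n\rightharpoonup\bar w$ in $E$ and $\bar\xi_n\rightharpoonup\bar\xi:=\bar z+\bar w$; by the $L^2$-orthogonality of $E^+$ and $E^-$, $\|\bar\xi\|_2^2=\|\bar z\|_2^2+\|\bar w\|_2^2\geq s^2\|z^+\|_2^2>0$. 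By the compact embedding of Lemma \ref{embedding}, after one more extraction $\bar\xi_n\to\bar\xi$ a.e.\ on $\R\times\R^N$, and $\Omega_1:=\{(t,x):\bar\xi(t,x)\neq 0\}$ has positive Lebesgue measure.

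The \emph{decisive step} is to show the nonlinear term blows up:
$$
B_n:=\int_\R\int_{\R^N}\frac{F_{\eps_n}(x,|\xi_n|)}{\|\xi_n\|^2}\,dtdx\ \longrightarrow\ +\infty ,
$$
which contradicts the inequality $0\leq\frac{\Phi_{\eps_n}(\xi_n)}{\|\xi_n\|^2}\leq\frac12(1+\|V\|_\infty)-B_n$ obtained as above. This is where the structure of the penalised nonlinearity enters: because $\tilde{g}$ is bounded, $F(x,\cdot)$ is superquadratic only where $\chi(x)<1$, so $(H_3)$ cannot be invoked directly on $\Phi_{\eps_n}$. However, for fixed $(t,x)\in\Omega_1$ we have $\eps_n x\to 0\in\Lambda$ with $\Lambda$ open and $\chi(0)=\zeta(0)=0$, so $\chi(\eps_n x)\to 0$ by continuity; hence $1-\chi(\eps_n x)\geq\tfrac12$ for $n$ large, giving $F_{\eps_n}(x,s)\geq\big(1-\chi(\eps_n x)\big)G(s)\geq\tfrac12 G(s)$ for all $s\geq 0$. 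On $\Omega_1$, $|\xi_n(t,x)|=\|\xi_n\|\,|\bar\xi_n(t,x)|\to\infty$ and $|\bar\xi_n(t,x)|^2\to|\bar\xi(t,x)|^2>0$, so $(H_3)$ yields
$$
\liminf_{n\to\infty}\frac{F_{\eps_n}(x,|\xi_n|)}{\|\xi_n\|^2}\ \geq\ \liminf_{n\to\infty}\frac12\,\frac{G(|\xi_n|)}{|\xi_n|^2}\,|\bar\xi_n|^2\ =\ +\infty \quad\text{on }\Omega_1 .
$$
Fatou's lemma applied to the nonnegative integrands $F_{\eps_n}(x,|\xi_n|)/\|\xi_n\|^2$ then gives $\liminf_n B_n\geq+\infty$, i.e.\ $B_n\to+\infty$, the required contradiction.

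I expect the \emph{main obstacle} to be precisely this interaction between the truncation and the dilation: the penalisation destroys superquadratic growth on the $\eps$-independent region $\{\chi=1\}$, and the lemma is genuinely $\eps$-dependent because it is the change of variables $x\mapsto\eps x$ that pushes every fixed point into $\{\chi=0\}$ (where $F=G$) as $\eps\to0^+$, restoring $(H_3)$. The remaining items — arranging all extractions (for $\Phi_{\eps_n}\geq 0$, $s_n\to s$, $\bar w_n\rightharpoonup\bar w$, a.e.\ convergence) along one common subsequence, and checking $|\Omega_1|>0$ — are routine.
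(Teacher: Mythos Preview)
Your proof is correct and follows essentially the same contradiction scheme as the paper: normalise, use $(V_1)$ and Lemma~\ref{l2} to force $\|\bar z_n\|^2\geq\tfrac{1-\|V\|_\infty}{2}$, pass to a subsequence with $\bar\xi_n\rightharpoonup\bar\xi\neq 0$ and a.e.\ convergence, then invoke Fatou and $(H_3)$ on the set $\Omega_1$. The paper's own proof is terser and simply cites $(H_3)$ at the last step without comment; you correctly identify and resolve the one genuinely nontrivial point the paper leaves implicit, namely that $(H_3)$ concerns $G$ rather than the truncated $F_{\eps_n}$, and that the gap is closed because $\eps_n x\to 0\in\Lambda$ forces $\chi(\eps_n x)=0$ for large $n$, so $F_{\eps_n}(x,\cdot)=G(\cdot)$ pointwise eventually.
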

\begin{proof}
To prove this, we assume contrarily that there exist $z \in E \setminus E^-$, a sequence $\{\eps_n\} \subset \R^+$ with $\eps_n=o_n(1)$, and a sequence $\{\xi_n\} \subset \hat{E}(z)$ with $\xi_n=\tau_n z^+ + w_n$ for $\{\tau_n\} \subset \R^+$ and $\{w_n\}\subset E^-$ satisfying $\|\xi_n\| \to \infty$ as $ n \to \infty$ such that
\begin{align} \label{conbdd}
\Phi_{\eps_n}(\xi_n) \geq 0.
\end{align}
Define
$$
\bar{\xi}_n:=\frac{\xi_n}{\|\xi_n\|}= \frac{\tau_n}{\|\xi_n\|} z^+ +  \frac{w_n}{\|\xi_n\|}:=\bar{z}_n + \bar{w}_n,
$$
where
$$
\bar{z}_n:= \frac{\tau_n}{\|\xi_n\|}z^+ \in E^+, \quad \bar{w}_n:=\frac{w_n}{\|\xi_n\|} \in E^-.
$$
Therefore,
\begin{align} \label{barxi}
1=\|\bar{\xi}_n\|^2=\|\bar{z}_n\|^2 + \|\bar{w}_n\|^2.
\end{align}
Notice that
\begin{align*}
0 \leq \frac{\Phi_{\eps_n}(\xi_n)}{\|{\xi}_n\|^2}&= \frac 12 \left(\|\bar{z}_n\|^2 - \|\bar{w}_n\|^2\right)
+ \frac 12 \int_{\R}\int_{\R^N} V_{\eps_n}(x)|\bar{z}_n + \bar{w}_n|^2 \, dt dx \\
& \quad -\int_{\R}\int_{\R^N} \frac{F_{\eps_n}(x, |\xi_n|^2)}{\|\xi_n\|^2} \, dtdx \\
& \leq \frac 12 \left(\|\bar{z}_n\|^2 - \|\bar{w}_n\|^2\right)
+ \frac 12 {\|V\|_{\infty}} \left(\|\bar{z}_n\|^2_2 + \|\bar{w}_n\|_2^2\right) \\
&\leq \frac 12 \left(1 +\|V\|_{\infty}\right)\|\bar{z}_n\|^2 - \frac 12 \left(1-\|V\|_{\infty}\right) \|\bar{w}_n\|^2,
\end{align*}
where we used the fact that $F(x, s) \geq 0$ for any $x \in \R^N$ and $s \geq 0$, and Lemma \ref{l2}. This, together with \eqref{barxi}, indicates that
\begin{align*}
\left(\frac{1- \|V\|_{\infty}}{1 + \|V\|_{\infty}}\right) \|\bar{w}_n\|^2 \leq  \|\bar{z}_n\|^2 =1 -\|\bar{w}_n\|^2.
\end{align*}
Hence
\begin{align*}
0 \leq \|\bar{w}_n\|^2 \leq \frac{1 + \|V\|_{\infty}}{2}, \quad  \frac{1 - \|V\|_{\infty}}{2} \leq \|\bar{z}_n\|^2=\frac{\tau_n^2}{\|\xi_n\|^2} \|z^+\|^2\leq 1.
\end{align*}
We now suppose that $\bar{w}_n \rightharpoonup \bar{w}$ and $\bar{z}_n \to \tau z^+$ in $E$ as $n \to \infty$, where
$$
\frac{\tau_n}{\|\xi_n\|} \to \tau \neq 0 \,\, \mbox{in} \,\, \R \, \, \mbox{as} \,\, n \to \infty.
$$
Thus $\bar{\xi}_n  \rightharpoonup \bar{\xi}:= \tau z^+ + \bar{w} \neq 0$ in $E$ as $n \to \infty$. By Lemma \ref{embedding}, it then yields that $\bar{\xi}_n \to \bar{\xi}$ a.e on $\R \times \R^N$ as $n \to \infty$. Setting
$$
\Omega_1:=\left\{(t, x) \in \R \times \R^N: \bar{\xi}(t, x) \neq 0 \right\},
$$
we have that $|\Omega_1| >0$, where $|\Omega| $ denotes the Lebesgue measure of the set $\Omega \subset \R \times \R^N$. Recall that $\|\xi_n\| \to \infty$ as $n \to \infty$, then
\begin{align} \label{nonzero}
{\xi}_n(t, x) \to \infty \,\, \mbox{as}\,\, n \to \infty  \, \, \mbox{for any} \,\, (t, x) \in \Omega_1.
\end{align}
We now apply \eqref{conbdd}, \eqref{barxi}, \eqref{nonzero}, Fatou's lemma, and $(H_3)$ to conclude that
\begin{align*}
0 \leq \limsup_{n \to \infty} \frac{\Phi_{\eps_n}(\xi_n)}{\|{\xi}_n\|^2}& =\frac 12 \limsup_{n \to \infty} \left( \|\bar{z}_n\|^2-\|\bar{w}_n\|^2\right)
+ \frac 12 \limsup_{n \to \infty} \int_{\R}\int_{\R^N} V_{\eps_n}(x) \frac{|\xi_n|^2}{\|\xi_n\|^2} \, dt dx \\
& \quad - \liminf_{n \to \infty} \int_{\R}\int_{\R^N} \frac{F_{\eps_n}(x, |\xi_n|)}{|\xi_n|^2} \frac{|\xi_n|}{\|\xi_n\|^2} \, dtdx \\
& \leq  \frac 12 + \frac {\|V\|_{\infty}}{2} - \int \int_{\Omega_1} \liminf_{n \to \infty} \frac{F_{\eps_n}(x, |\xi_n|)}{|\xi_n|^2} \frac{|\xi_n|}{\|\xi_n\|^2} \, dtdx \\
&=-\infty,
\end{align*}
which is a contradiction. Thus the proof is completed.
\end{proof}

\begin{lem} \label{exist}
For any $z \in E \setminus E^-$ and $0 < \eps< \eps_{z}$, there exist $\tau_{z} >0$ and $w_{z} \in E^-$ such that
$$
\Phi_{\eps}(\tau_{z} z^++ w_{z})=\sup_{\tau \in \R^+, w \in E^-} \Phi_{\eps}(\tau z^++ w),
$$
and $\tau_{z} z^+ + w_{z} \in \mathcal{N}$, where $\eps_{z}>0$ is determined in Lemma \ref{bdd}.
\end{lem}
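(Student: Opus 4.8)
The plan is to produce the pair $(\tau_z,w_z)$ as the limit of a maximizing sequence for the $C^1$ functional $\gamma_{\eps,z}(\tau,w)=\Phi_{\eps}(\tau z^++w)$ on $\R^+\times E^-$, in the spirit of the Szulkin--Weth reduction on the generalized Nehari manifold but adapted to the present indefinite framework. Once a maximizer with $\tau_z>0$ is in hand, Lemma \ref{critical} will immediately place $\tau_z z^++w_z$ on $\mathcal{N}$. Throughout, recall that $z\in E\setminus E^-$ means $z^+\neq 0$, and that $\eps<\eps_z$ is exactly the range in which Lemma \ref{bdd} applies.

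First I would check that $m:=\sup_{\tau\in\R^+,\,w\in E^-}\Phi_{\eps}(\tau z^++w)$ satisfies $0<m<\infty$. Finiteness is a direct consequence of Lemma \ref{bdd}: $\Phi_{\eps}<0$ on $\hat{E}(z)\setminus B_{R_z}(0)$, while on $\overline{B_{R_z}(0)}$ one has $\Phi_{\eps}(\xi)\le\tfrac12\|\xi^+\|^2+\tfrac12\|V\|_{\infty}\|\xi\|_2^2\le\tfrac12(1+\|V\|_{\infty})R_z^2$ by Lemma \ref{l2}. Positivity follows by testing with $w=0$: using \eqref{h1h2} with $\gamma$ so small that $\|V\|_{\infty}+\gamma<1$, together with Lemma \ref{l2} and the embedding in Lemma \ref{embedding}, one obtains
\[
\Phi_{\eps}(\tau z^+)\ \ge\ \tfrac12\bigl(1-\|V\|_{\infty}-\gamma\bigr)\tau^2\|z^+\|^2-c\,\tau^p
\]
for some $c>0$ and all $\tau>0$; since $p>2$, the right-hand side is strictly positive for $\tau>0$ small, whence $m>0$.

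Next, take a maximizing sequence $(\tau_n,w_n)\subset\R^+\times E^-$ with $\Phi_{\eps}(\tau_n z^++w_n)\to m$. For large $n$ the value is positive, so Lemma \ref{bdd} forces $\tau_n z^++w_n\in B_{R_z}(0)$; since $\|\tau_n z^++w_n\|^2=\tau_n^2\|z^+\|^2+\|w_n\|^2$, both $\{\tau_n\}$ and $\{w_n\}$ are bounded. Passing to a subsequence, $\tau_n\to\tau_z\ge 0$ (so $\tau_n z^+\to\tau_z z^+$ strongly, the $E^+$-direction here being one-dimensional) and $w_n\rightharpoonup w_z$ in $E^-$, and by Lemma \ref{embedding} also $\xi_n:=\tau_n z^++w_n\to\xi_z:=\tau_z z^++w_z$ a.e.\ on $\R\times\R^N$. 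Writing $w_n=w_z+r_n$ with $r_n\rightharpoonup 0$ in $E^-$, a short computation (the cross terms involve $r_n$ and the strong correction $(\tau_n-\tau_z)z^+$ paired against fixed $L^2$-functions) gives
\begin{align*}
-\tfrac12\|w_n\|^2+\tfrac12\int_{\R}\int_{\R^N}V_{\eps}|\xi_n|^2
&= -\tfrac12\|w_z\|^2+\tfrac12\int_{\R}\int_{\R^N}V_{\eps}|\xi_z|^2 \\
&\quad +\Bigl(-\tfrac12\|r_n\|^2+\tfrac12\int_{\R}\int_{\R^N}V_{\eps}|r_n|^2\Bigr)+o_n(1),
\end{align*}
and by Lemma \ref{l2} together with $\|V\|_{\infty}<1$ the bracketed remainder is $\le-\tfrac12(1-\|V\|_{\infty})\|r_n\|^2\le 0$. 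Since $F_{\eps}(x,|\cdot|)\ge 0$, Fatou's lemma yields $\limsup_n\bigl(-\int_{\R}\int_{\R^N}F_{\eps}(x,|\xi_n|)\bigr)\le-\int_{\R}\int_{\R^N}F_{\eps}(x,|\xi_z|)$, while $\tfrac12\tau_n^2\|z^+\|^2\to\tfrac12\|\xi_z^+\|^2$. Summing these contributions, $m=\lim_n\Phi_{\eps}(\xi_n)\le\Phi_{\eps}(\xi_z)\le m$, so $\Phi_{\eps}(\xi_z)=m$. Moreover $\tau_z>0$: otherwise $\xi_z=w_z\in E^-$ and $\Phi_{\eps}(w_z)\le-\tfrac12(1-\|V\|_{\infty})\|w_z\|^2\le 0<m$, a contradiction.

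Finally, since $(\tau_z,w_z)$ is a global maximizer of the $C^1$ functional $\gamma_{\eps,z}$ over the open set $(0,\infty)\times E^-$, all partial derivatives vanish there, so $(\tau_z,w_z)$ is a critical point of $\gamma_{\eps,z}$ with $\tau_z>0$ and $w_z\in E^-$; Lemma \ref{critical} then gives $\tau_z z^++w_z\in\mathcal{N}$, and the supremum identity is built into the construction, which completes the argument. I expect the main obstacle to be precisely the weak upper semicontinuity step above: because $E$ does not embed compactly into $L^2(\R\times\R^N,\R^{2M})$, the indefinite term $\int_{\R}\int_{\R^N}V_{\eps}|\xi_n|^2$ is not weakly continuous, and everything hinges on absorbing its ``remainder'' part into $-\tfrac12\|r_n\|^2$ via $\|V\|_{\infty}<1$ and Lemma \ref{l2} — a cancellation that also makes clear why the sign restriction in $(V_1)$ is needed here.
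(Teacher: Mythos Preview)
Your proof is correct and follows essentially the same route as the paper: bounded maximizing sequence via Lemma \ref{bdd}, weak upper semicontinuity of $\Phi_\eps$ on $\hat E(z)$, then $\tau_z>0$ and Lemma \ref{critical}. The only cosmetic difference is that the paper obtains weak upper semicontinuity by computing $\Phi_\eps''(w)[h,h]\le -(1-\|V\|_\infty)\|h\|^2$ for $h\in E^-$ and invoking strict concavity in the $E^-$ direction, whereas you verify it directly through the decomposition $w_n=w_z+r_n$ together with Fatou's lemma for the nonlinear term---both arguments rest on the same absorption of the $V_\eps$-remainder into $-\tfrac12\|r_n\|^2$ via $\|V\|_\infty<1$ and Lemma \ref{l2}.
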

\begin{proof}
For any $z \in E \setminus E^-$ and $0<\eps<\eps_z$, we define that
$$
\beta_{\eps, z}:=\sup_{\tau \in \R^+, \, w \in E^-} \Phi_{\eps}(\tau z^+ + w).
$$
Obviously, $\beta_{\eps, z} \geq 0$. From Lemma \ref{bdd}, we know that there is a bounded minimizing sequence $\{\xi_n\} \subset \hat{E}(z)$ with $\xi_n= \tau_n z^+ + w_n$ for $\{\tau_n\} \subset \R^+$ and $\{w_n\} \subset E^-$ such that $\Phi_{\eps}(\xi_n)=\beta_{\eps, z}+o_n(1)$. Thus there exist $\tau_{z} \in \R^+$ and $w_{z} \in E^-$ such that $\tau_n \to \tau_z$ in $\R$ and $w_n \rightharpoonup w_z$ in $E$ as $n \to \infty$. Notice that, for any $w, h \in E^-$,
\begin{align} \label{concave}
\begin{split}
\Phi_{\eps}''(w)[h, h]&=-\|h\|^2+ \int_{\R}\int_{\R^N} V_{\eps}(x)|h|^2 \, dtdx- \int_{\R}\int_{\R^N} f_{\eps}(x, |w|) |h|^2 \, dtdx\\
&\quad -\int_{\R} \int_{\R^N} f_{\eps}'(x, |w|) \frac{(w \cdot h)^2}{|w|} \, dtdx \\
& \leq - \left(1- \|V\|_{\infty}\right) \|h\|^2,
\end{split}
\end{align}
where we used the fact that $f(x, s)\geq 0$ for any $x\in\R^N$ and $s \geq 0$, and $(H_4)$. Hence \eqref{concave} suggests that $\Phi_{\eps}$ is strictly concave on $E^-$. Further, we derive that $\Phi_{\eps}$ is weak upper semicontinuous on $E^-$, from which we are able to conclude that $\Phi_{\eps}(\tau_{z} z^+ + w_{z}) =\beta_{\eps, z}$. Observe that, for any $w \in E^-$,
\begin{align*}
\Phi_{\eps}(w)&=-\|w\|^2+ \int_{\R}\int_{\R^N} V_{\eps}(x)|w|^2 \, dtdx- \int_{\R}\int_{\R^N} F_{\eps}(x, |w|)  \, dtdx\\
&\leq - \left(1- \|V\|_{\infty}\right) \|w\|^2
\end{align*}
which shows that $\tau_z >0$. It then follows from Lemma \ref{critical} that $\tau_{z} z^+ + w_{z} \in \mathcal{N}$, and we have finished the proof.
\end{proof}

\begin{lem} \label{compare}
For any $z \in \mathcal{N}$, there holds that
$$
\Phi_{\eps}(\tau z + w) \leq \Phi_{\eps}(z) \quad \mbox{for any} \,\, \tau \in \R^+, w \in E^-.
$$
\end{lem}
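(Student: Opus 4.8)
My plan is to prove the inequality by a direct computation on the fiber $\R^{+}z^{+}\oplus E^{-}$, in the spirit of the Szulkin--Weth analysis of the generalized Nehari manifold, using only that $f_{\eps}(x,\cdot)$ is nondecreasing; strict monotonicity, which is usually invoked in such arguments, is not needed for this statement. Write $\Phi_{\eps}(\xi)=Q_{\eps}(\xi)-\int_{\R}\int_{\R^{N}}F_{\eps}(x,|\xi|)\,dtdx$, where $Q_{\eps}(\xi):=\tfrac12(\|\xi^{+}\|^{2}-\|\xi^{-}\|^{2})+\tfrac12\int_{\R}\int_{\R^{N}}V_{\eps}(x)|\xi|^{2}\,dtdx$ is the quadratic part, and let $B_{\eps}$ be the associated symmetric bilinear form, $B_{\eps}(\xi,\eta):=\langle\xi^{+},\eta^{+}\rangle-\langle\xi^{-},\eta^{-}\rangle+\int_{\R}\int_{\R^{N}}V_{\eps}(x)\xi\cdot\eta\,dtdx$. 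Using the identity $(L\xi,\eta)_{2}=\langle\xi^{+},\eta^{+}\rangle-\langle\xi^{-},\eta^{-}\rangle$, which follows from the polar decomposition \eqref{polar} and the orthogonality relations \eqref{orthogonal} exactly as in the computation of $(Lz,z)_{2}$ in Section~\ref{pre}, one gets $\Phi_{\eps}'(\xi)\eta=B_{\eps}(\xi,\eta)-\int_{\R}\int_{\R^{N}}f_{\eps}(x,|\xi|)\xi\cdot\eta\,dtdx$. Fix $z\in\mathcal{N}$, $\tau\in\R^{+}$ and $w\in E^{-}$, and expand $Q_{\eps}(\tau z+w)=\tfrac{\tau^{2}}{2}B_{\eps}(z,z)+\tau B_{\eps}(z,w)+\tfrac12B_{\eps}(w,w)$.

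The argument then rests on two elementary reductions. First, since $w\in E^{-}$ we have $B_{\eps}(w,w)=-\|w\|^{2}+\int_{\R}\int_{\R^{N}}V_{\eps}(x)|w|^{2}\,dtdx\le-(1-\|V\|_{\infty})\|w\|^{2}\le0$ by $(V_1)$ and $\|w\|_{2}\le\|w\|$ from Lemma~\ref{l2}; hence $Q_{\eps}(\tau z+w)\le\tfrac{\tau^{2}}{2}B_{\eps}(z,z)+\tau B_{\eps}(z,w)$. Second, the membership $z\in\mathcal{N}$ means $\Phi_{\eps}'(z)z=0$ and $\Phi_{\eps}'(z)h=0$ for every $h\in E^{-}$, i.e. $B_{\eps}(z,z)=\int_{\R}\int_{\R^{N}}f_{\eps}(x,|z|)|z|^{2}\,dtdx$ and $B_{\eps}(z,h)=\int_{\R}\int_{\R^{N}}f_{\eps}(x,|z|)z\cdot h\,dtdx$ for all $h\in E^{-}$. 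Substituting these (with $h=w$) into the previous bound and recalling $\Phi_{\eps}(z)=\tfrac12\int_{\R}\int_{\R^{N}}f_{\eps}(x,|z|)|z|^{2}\,dtdx-\int_{\R}\int_{\R^{N}}F_{\eps}(x,|z|)\,dtdx$, the claimed inequality $\Phi_{\eps}(\tau z+w)\le\Phi_{\eps}(z)$ follows once we prove the pointwise estimate
\[
f_{\eps}(x,|z|)\Bigl(\tfrac{\tau^{2}-1}{2}|z|^{2}+\tau\,z\cdot w\Bigr)\le F_{\eps}(x,|\tau z+w|)-F_{\eps}(x,|z|)\qquad\text{a.e. on }\R\times\R^{N}.
\]

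To establish this pointwise estimate, first note the algebraic identity $\tfrac{\tau^{2}-1}{2}|z|^{2}+\tau\,z\cdot w=\tfrac12\bigl(|\tau z+w|^{2}-|w|^{2}-|z|^{2}\bigr)\le\tfrac12\bigl(|\tau z+w|^{2}-|z|^{2}\bigr)$; since $f_{\eps}(x,\cdot)\ge0$, the left-hand side above is therefore at most $\tfrac12f_{\eps}(x,|z|)\bigl(|\tau z+w|^{2}-|z|^{2}\bigr)$. It thus suffices to observe the one-variable fact that, for any nonnegative nondecreasing $h$ on $[0,\infty)$ and $H(s):=\int_{0}^{s}h(\sigma)\sigma\,d\sigma$, one has $\tfrac12h(a)(b^{2}-a^{2})\le H(b)-H(a)$ for all $a,b\ge0$; this is immediate by writing $H(b)-H(a)=\int_{a}^{b}h(\sigma)\sigma\,d\sigma$ if $b\ge a$ (respectively $-\int_{b}^{a}h(\sigma)\sigma\,d\sigma$ if $b<a$) and comparing $h(\sigma)$ with $h(a)$ on the relevant interval. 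Applying it with $a=|z|$, $b=|\tau z+w|$ and $h=f_{\eps}(x,\cdot)$, which is nondecreasing by $(H_4)$ (indeed $g$ is nondecreasing, hence so is $\tilde{g}=\min\{g,\mu\}$, and thus so is the convex combination $f=(1-\chi)g+\chi\tilde{g}$ underlying $f_{\eps}$), closes the estimate. The degenerate case $\tau=0$ is handled by the same chain (the middle term becomes $-\int_{\R}\int_{\R^{N}}F_{\eps}(x,|w|)\,dtdx\le0$ and one uses \eqref{notar}), and no smallness of $\eps$ is ever used, so the conclusion holds for every $\eps>0$.

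I do not expect a genuine obstacle here; the single point worth isolating — and the reason the statement survives even though $f_{\eps}$ is only nondecreasing rather than strictly increasing — is that the fiber-maximality of $z\in\mathcal{N}$ collapses, after discarding the nonpositive term $B_{\eps}(w,w)$ (which is where $\|V\|_{\infty}<1$ enters) and invoking the two Nehari identities, to the elementary monotone one-variable inequality above, with the spatially inhomogeneous potential $V_{\eps}$ playing no role beyond that sign. Strict monotonicity would only be needed to get \emph{uniqueness} of the fiber maximizer, which is not asserted in this lemma.
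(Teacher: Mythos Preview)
Your proof is correct and, up to the point where both arguments reduce to the pointwise inequality
\[
\frac{1}{2}f_{\eps}(x,|z|)\bigl((\tau^{2}-1)|z|^{2}+2\tau\,z\cdot w\bigr)+F_{\eps}(x,|z|)-F_{\eps}(x,|\tau z+w|)\le 0,
\]
it coincides with the paper's: both subtract the identity $\Phi_{\eps}'(z)\bigl((\tau^{2}-1)z+2\tau w\bigr)=0$ (your two Nehari relations amount to exactly this), then discard the nonpositive block $-\tfrac12\|w\|^{2}+\tfrac12\int V_{\eps}|w|^{2}$.

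Where you diverge is in the proof of that pointwise inequality. The paper splits into the cases $z\cdot(\tau z+w)\le 0$ and $z\cdot(\tau z+w)>0$; in the second case it differentiates in $\tau$, characterises stationary points by $f_{\eps}(x,|z|)=f_{\eps}(x,|\tau_{0}z+w|)$, and then invokes a separate claim comparing $F_{\eps}(x,s_1)-F_{\eps}(x,s_2)$ with $\tfrac12 f_{\eps}(x,s_1)s_1^{2}-\tfrac12 f_{\eps}(x,s_2)s_2^{2}$ when $f_{\eps}(x,s_1)=f_{\eps}(x,s_2)$. Your route is shorter and more transparent: the algebraic identity $\tfrac{\tau^{2}-1}{2}|z|^{2}+\tau z\cdot w=\tfrac12(|\tau z+w|^{2}-|w|^{2}-|z|^{2})$ absorbs the vector structure in one stroke, and the remaining scalar inequality $\tfrac12 h(a)(b^{2}-a^{2})\le H(b)-H(a)$ for $h\ge 0$ nondecreasing covers both sign cases at once. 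The paper's argument, by contrast, yields the slightly sharper intermediate information that at a stationary point one actually has $h_{\eps}(\tau_{0},x)\le -\tfrac12 f_{\eps}(x,|z|)|w|^{2}$, but this refinement is never used. Your observation that only monotonicity (not strict monotonicity) of $f_{\eps}(x,\cdot)$ is needed is the same point the paper is implicitly making with its equal-values claim; your one-variable lemma just makes it explicit and avoids the case split entirely.
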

\begin{proof}
Since $z \in \mathcal{N}$, then $\Phi_{\eps}'(z)( (\tau^2-1)z + 2\tau w )=0$ for any $\tau \in \R$ and $w \in E^-$. Therefore,
\begin{align}  \nonumber
\Phi_{\eps}(\tau z + w) - \Phi_{\eps}(z)&=\Phi_{\eps}(\tau z + w) - \Phi_{\eps}(z) - \frac 12 \Phi_{\eps}'(z)((\tau^2-1 )z + 2\tau w ) \\ \nonumber
&= -\frac 12  \|w\|^2 + \frac 12 \int_{\R} \int_{\R^N} V_{\eps}(x)|w|^2 \, dtdx - \int_{\R}\int_{R^N} F_{\eps}(x, |\tau z +w|) \, dtdx \\ \label{cmpr}
& \quad + \int_{\R} \int_{\R^N}\frac 12 f_{\eps}(x, |z|) z \cdot ((\tau^2-1 )z + 2\tau w) +  F_{\eps}(x, |z|) \, dtdx \\ \nonumber
& \leq \int_{\R} \int_{\R^N} \frac 12 f_{\eps}(x, |z|) z \cdot ((\tau^2-1 )z + 2\tau w) + F_{\eps}(x, |z|)  \, dtdx \\ \nonumber
& \quad - \int_{\R} \int_{\R^N} F_{\eps}(x, |\tau z +w|) \, dtdx,
\end{align}
where we used the following simple fact,
$$
-\frac 12  \|w\|^2 + \frac 12 \int_{\R} \int_{\R^N} V_{\eps}(x)|w|^2 \, dtdx  \leq -\frac 12  \|w\|^2 + \frac 12 \|V\|_{\infty} \|w\|^2 \leq 0.
$$
For $z, w \in \R^M$, let us now define $h: \R^+ \times \R^N \to \R$ by
\begin{align} \label{defh}
h_{\eps}(\tau, x):= \frac 12 f_{\eps}(x, |z|) z \cdot ((\tau^2-1 )z + 2\tau w) + F_{\eps}(x, |z|) - F_{\eps}(x, |\tau z +w|).
\end{align}
We shall deduce that $h_{\eps}(\tau, x)  \leq 0$ for any $\tau \in \R^+$ and $x \in \R^N$. To do this, we shall consider the following two cases.\smallskip

{\it Case 1:} $z \cdot \left(\tau z + w \right) \leq 0$. \smallskip

Recall that \eqref{notar}, there then holds that
\begin{align} \label{ar}
\frac 12 f_{\eps}(x, s)s^2 - F_{\eps}(x, s) \geq 0 \quad \,\, \mbox{for any} \, \, x\in \R^N, s \geq 0.
\end{align}
Thus, for any $\tau \in \R^+$ and $x \in \R^N$,
\begin{align*}
h_{\eps}(\tau, x) & \leq \frac 12 f_{\eps}(x, |z|) z \cdot \left(\left(\tau^2-1 \right)z + 2\tau w\right) + \frac 12 f_{\eps}(x, |z|)|z|^2  - F_{\eps}(x, |\tau z +w|) \\
& \leq 0,
\end{align*}
where we used the assumption that $z \cdot w \leq - \tau |z|^2$ and the fact that $F_{\eps}(x, s) \geq 0$ for any $x \in \R^N$, $s \geq 0$.

{\it Case 2:} $z \cdot \left(\tau z + w \right) > 0$. \smallskip

Using \eqref{ar}, we can see that $h_{\eps}(0, x) \leq 0$ for any $x \in \R^N$. Moreover,  by $(H_3)$, for any $x \in \R^N$, we have that $h(\tau, x) \to -\infty$ as $\tau \to \infty$.
Note that
\begin{align*}
\partial_{\tau}h_{\eps}(\tau, x)= \left(f_{\eps}(x, |z|)-f_{\eps}(x, |\tau z+w|)\right) z \cdot \left(\tau z + w\right).
\end{align*}
If $\partial_{\tau}h_{\eps}(\tau_0, x)=0$ for some $\tau_0 \in \R^+$, then
\begin{align} \label{stationary}
f_{\eps}(x, |z|)=f_{\eps}(x, |\tau_0 z+w|),
\end{align}
because of $z \cdot \left(\tau z + w \right) > 0$. We now claim that if $f_{\eps}(x, s_1)=f_{\eps}(x, s_2)$ for $s_1, s_2 \in \R^+$, then
$$
F_{\eps}(x, s_1)-F(x, s_2) \leq \frac 12 f_{\eps}(x, s_1)s_1^2-\frac 12 f_{\eps}(x, s_2)s_2^2.
$$
To prove this claim, let us define that $\hat{F}_{\eps}(x, s):=F_{\eps}(x, s)-\frac 12 f_{\eps}(x, s)s^2$. It is easy to see that
$$
\hat{F}_{\eps}'(x, s)=-\frac 12 f_{\eps}'(x, s)s^2 \leq 0,
$$
because $f_{\eps}(x, \cdot)$ is nondecreasing on $\R^+$ for any $x \in \R^N$. Thus $\hat{F}_{\eps}(x, \cdot)$ is nonincreasing on $\R^+$ for any $x \in \R^N$. Consequently, if $s_1 \geq s_2$, then  $\hat{F}_{\eps}(x, s_1) \leq \hat{F}_{\eps}(x, s_2)$, i.e.
$$
F_{\eps}(x, s_1)-F(x, s_2) \leq \frac 12 f_{\eps}(x, s_1)s_1^2-\frac 12 f_{\eps}(x, s_2)s_2^2.
$$
If $s_1 < s_2$, then
\begin{align} \label{decreasing}
F(x, s_1)-F(x, s_2)=- \int_{s_1}^{s_2} f_{\eps}(x, s) s \, ds \leq -\frac 12 f_{\eps}(x, s_1) \left(s_2^2-s_1^2\right).
\end{align}
Since we assumed that $f_{\eps}(x, s_1)=f_{\eps}(x, s_2)$ for $s_1, s_2 \in \R$, then \eqref{decreasing} gives rise to
$$
F(x, s_1)-F(x, s_2) \leq \frac 12 f_{\eps}(x, s_1) s_1^2- \frac 12  f_{\eps}(x, s_2) s_2^2.
$$
Hence the claim follows. Noticing that \eqref{stationary}, we now apply the claim to conclude that
\begin{align*}
h_{\eps}(\tau_0, x)&= \frac 12 f_{\eps}(x, |z|) z \cdot \left(\left(\tau_0^2-1 \right)z + 2\tau_0 w\right) + F_{\eps}(x, |z|) - F_{\eps}(x, |\tau_0 z +w|) \\
& \leq \frac 12 f_{\eps}(x, |z|) z \cdot \left(\left(\tau_0^2-1 \right)z + 2\tau_0 w\right)+ \frac 12 f_{\eps}(x, |z|)|z|^2- \frac 12 f_{\eps}(x, |\tau_0 z + w|) |\tau_0z + w|^2\\
&=\frac 12 f_{\eps}(x, |z|) z \cdot \left(\left(\tau_0^2-1 \right)z + 2\tau_0 w\right)+ \frac 12 f_{\eps}(x, |z|)|z|^2- \frac 12 f_{\eps}(x, |z|) |\tau_0z + w|^2\\
& =- \frac 12  f_{\eps}(x, |z|)|w|^2 \\
& \leq 0.
\end{align*}
Consequently, we obtain that $h_{\eps}(\tau, x) \leq 0$ for any $\tau \in \R^+$ and $x \in \R^N$. Thus, by using \eqref{cmpr}, the lemma then follows, and the proof is completed.
\end{proof}

Letting $P: E \to E^+$ and $Q: E \to E^-$ be orthogonal projections, we introduce another norm on $E$ as
\begin{align} \label{deftopy}
|||z|||:=\max\left\{\|Pz\|, \, \sum_{k=1}^{\infty} \frac{1}{2^{k+1}}|\langle Qz, \,e_k \rangle |\right\} \quad \mbox{for any} \,\, z \in E,
\end{align}
where $\{e_k\} \subset E^-$ is a total orthonormal sequence. The topology generated by $|||\cdot |||$ is denoted by $\mathcal{T}$. Clearly,
\begin{align}\label{norm}
\|Pz\| \leq |||z||| \leq \|z\|.
\end{align}


\begin{lem} \label{deformation}
For any $\eps>0$ small, define
\begin{align} \label{ceps}
c_{\eps}:=\inf_{z \in E \setminus E^-} \inf_{h \in \Gamma(z)} \sup_{z' \in M(z)} \Phi_{\eps}(h(1, z')),
\end{align}
where
\begin{align} \label{defmz}
M(z):=\left\{\tau z + w: \tau \in \R^+, w \in E^-, \, \|\tau z + w \| \leq R_z \right\}
\end{align}
and $R_z>0$ is determined in Lemma \ref{bdd}, in addition,
\begin{align} \label{defGamma}
\Gamma(z):=\left\{h \in C([0, 1] \times M(z)): h \,\, \mbox{satisfies} \,\, (h_1) \mbox{-} (h_4) \right\}
\end{align}
and
\begin{enumerate}
\item [$(h_1)$] $h$ is $\mathcal{T}$-continuous;
\item [$(h_2)$] $h(0, z')=z'$ for any $z' \in M(z)$;
\item [$(h_3)$] $\Phi_{\eps}(z') \geq \Phi_{\eps}(h(t, z'))$ for any $t \in [0, 1], z' \in M(z)$;
\item [$(h_4)$] for every $(t, z') \in [0, 1] \times M(z) $, there is an open neighborhood $W$ in the product topology of $[0, 1]$ and $(E, \mathcal{T})$ such that
$$
\left\{w'-h(s, w'): (s, w') \in W \cap ([0, 1] \times M(z))\right\}
$$
is contained in a finite-dimensional subspace of $E$.
\end{enumerate}
Then there exists a sequence $\{z_n\} \subset E$ such that
\begin{align*}
\Phi_{\eps}(z_n) \leq c_{\eps}+o_n(1), \,\, \,\left(1 + \|z_n\| \right)\Phi_{\eps}'(z_n)=o_n(1).
\end{align*}
\end{lem}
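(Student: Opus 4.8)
I would argue by contradiction through a quantitative deformation. First, $c_\eps$ is finite: taking $h=\mathrm{id}$ (which lies in $\Gamma(z)$) gives, by Lemma~\ref{exist}, $c_\eps\le\sup_{z'\in M(z)}\Phi_\eps(z')=\beta_{\eps,z}<\infty$ for any fixed $z\in E\setminus E^-$; a standard linking of $M(z)$ with a small sphere in $E^+$ (using $(h_4)$ and the sign of $\Phi_\eps$ on $\partial M(z)$) gives $c_\eps\ge\rho>0$, but only finiteness is needed here. Suppose the conclusion fails: there is no $\{z_n\}\subset E$ with $\Phi_\eps(z_n)\le c_\eps+o_n(1)$ and $(1+\|z_n\|)\Phi'_\eps(z_n)=o_n(1)$. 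A routine argument then produces $\alpha>0$ such that, with $\delta:=\alpha/4$,
\[
(1+\|z\|)\,\|\Phi'_\eps(z)\|_{E'}\ \ge\ 4\delta \qquad\text{for every }z\text{ with }\Phi_\eps(z)\le c_\eps+4\delta .
\]
Since $c_\eps$ is an infimum, fix $\eta\in(0,\delta]$ and choose $z_*\in E\setminus E^-$ and $h_*\in\Gamma(z_*)$ with $\sup_{z'\in M(z_*)}\Phi_\eps(h_*(1,z'))<c_\eps+\eta$. The plan is to post-compose $h_*$ with a deformation $\sigma$ that drags $\Phi_\eps$ below $c_\eps-\delta$, thereby producing $\tilde h\in\Gamma(z_*)$ with $\sup_{z'\in M(z_*)}\Phi_\eps(\tilde h(1,z'))<c_\eps-\delta$, contradicting the definition~\eqref{ceps} of $c_\eps$.

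\textbf{Reduction to a bounded region.} The set $M(z_*)=(\R^+z_*^+ +E^-)\cap\overline{B_{R_{z_*}}(0)}$ is $\mathcal T$-compact, being the product of a compact segment in the one-dimensional space $\R z_*^+$ with a weakly compact ball of $E^-$; hence so is $[0,1]\times M(z_*)$. By $(h_4)$ and compactness the displacement set $\{z'-h_*(t,z'):(t,z')\in[0,1]\times M(z_*)\}$ lies in a single finite-dimensional subspace $F_0\subset E$, and being the $\mathcal T$-continuous image of a $\mathcal T$-compact set, with $\mathcal T$ agreeing with the norm topology on $F_0$, it is norm-compact, in particular bounded. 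As $M(z_*)$ is bounded, $\bigcup_{t\in[0,1]}h_*(t,M(z_*))$ and hence $N:=h_*(1,M(z_*))$ are contained in a fixed bounded set $B_*\subset E$; moreover $N\subset\Phi_\eps^{-1}\big((-\infty,c_\eps+\eta)\big)$, on which $\|\Phi'_\eps(z)\|_{E'}\ge 4\delta/(1+\sup_{B_*}\|\cdot\|)=:\alpha'>0$ by the previous step. Thus everything will take place in a bounded region, on which $\mathcal T$ is metrizable (by $|||\cdot|||$, cf.\ \eqref{deftopy}--\eqref{norm}); this is what makes a partition-of-unity construction feasible.

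\textbf{Construction of the deformation (main obstacle).} This is the technical heart. Cover $N$ (and a bounded neighbourhood of it) by $\mathcal T$-open sets $U_{z_0}$ on each of which $\langle\Phi'_\eps(\cdot),w_{z_0}\rangle$ stays positive for a suitable unit-scale vector $w_{z_0}$; this uses that the nonlinear part of $\Phi'_\eps$, namely $z\mapsto\int_{\R}\int_{\R^N}\big(V_\eps z-f_\eps(x,|z|)z\big)\cdot(\cdot)\,dtdx$, is weakly sequentially continuous on $E$ (escaping-mass tails $\int_{|x|>R}$ are uniformly small by \eqref{h1h21} and Lemma~\ref{embedding}), so these sets are genuinely $\mathcal T$-open. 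A $\mathcal T$-locally finite, $|||\cdot|||$-Lipschitz partition of unity then yields a norm-locally-Lipschitz, $\mathcal T$-continuous pseudo-gradient $W$ with $\|W\|\le2$ and $\langle\Phi'_\eps(z),W(z)\rangle>\alpha'$ on $B_*\cap\Phi_\eps^{-1}\big((c_\eps-2\delta,c_\eps+\eta]\big)$, whose flow has displacement lying locally in a finite-dimensional span of the $w_{z_i}$'s. The delicate point — the place where the schemes of \cite{KrSz,Med} break down — is that one must truncate near the level $c_\eps-2\delta$ and still keep the flow $\mathcal T$-continuous, whereas $\Phi_\eps$ is \emph{not} $\mathcal T$-upper semicontinuous: the term $\tfrac12\int_{\R}\int_{\R^N}V_\eps|z|^2\,dtdx$ fails to be weakly upper semicontinuous because $V$ changes sign and $E\hookrightarrow L^2_{\mathrm{loc}}$ is only locally compact, so a cutoff based directly on the $\Phi_\eps$-level need not be $\mathcal T$-continuous. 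Following the additional device of \cite{ChWang} (which runs a linking argument without the $\mathcal T$-upper semicontinuity hypothesis, at the cost of an extra condition), I would replace the naive level-cutoff by one compatible with $\mathcal T$, working throughout on the $\mathcal T$-compact bounded set $N$ and its forward images rather than on all of $E$. Integrating the truncated field gives a map $\sigma:[0,T]\times B_*\to E$, $\sigma(0,\cdot)=\mathrm{id}$, that is $\mathcal T$-continuous, has locally finite-dimensional displacement, stays in a fixed bounded set, does not increase $\Phi_\eps$, and decreases it at rate $\ge\alpha'$ while $\Phi_\eps(\sigma(t,z))\in[c_\eps-\delta,c_\eps+\eta]$; here $T:=(\eta+2\delta)/\alpha'$, so $\Phi_\eps(\sigma(T,z))<c_\eps-\delta$ whenever $\Phi_\eps(z)\le c_\eps+\eta$.

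\textbf{Conclusion.} Define
\[
\tilde h(t,z'):=
\begin{cases}
h_*(2t,z'), & 0\le t\le \tfrac12,\\
\sigma\big((2t-1)T,\ h_*(1,z')\big), & \tfrac12\le t\le 1 .
\end{cases}
\]
Then $\tilde h\in C([0,1]\times M(z_*))$ with $\tilde h(0,\cdot)=\mathrm{id}$, so $(h_2)$ holds; $\tilde h$ is $\mathcal T$-continuous as a concatenation of the $\mathcal T$-continuous maps $h_*$ and $\sigma$ (they match at $t=\tfrac12$ since $\sigma(0,\cdot)=\mathrm{id}$), giving $(h_1)$; $\Phi_\eps(\tilde h(t,z'))\le\Phi_\eps(z')$ because both pieces do not increase $\Phi_\eps$ and $h_*$ satisfies $(h_3)$, giving $(h_3)$; and $z'-\tilde h(t,z')$ lies locally in a finite-dimensional subspace, combining that of $h_*$ with that of $\sigma$, giving $(h_4)$. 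Hence $\tilde h\in\Gamma(z_*)$, and since $\Phi_\eps(h_*(1,z'))<c_\eps+\eta$ for every $z'\in M(z_*)$,
\[
c_\eps\ \le\ \sup_{z'\in M(z_*)}\Phi_\eps\big(\tilde h(1,z')\big)\ =\ \sup_{z'\in M(z_*)}\Phi_\eps\big(\sigma(T,h_*(1,z'))\big)\ <\ c_\eps-\delta ,
\]
a contradiction. Therefore the asserted sequence exists. The main obstacle, as indicated, is the construction in the third paragraph: producing a Cerami-type pseudo-gradient flow that is simultaneously $\mathcal T$-continuous and of locally finite-dimensional displacement, in the absence of $\mathcal T$-upper semicontinuity of $\Phi_\eps$.
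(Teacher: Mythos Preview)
Your overall strategy---contradiction, construction of a $\mathcal{T}$-locally Lipschitz pseudo-gradient via weak sequential continuity of $\Phi'_\eps$ on $\mathcal{T}$-open patches, integration to a flow, then concatenation with $h_*$---coincides with the paper's. You also correctly locate the central difficulty: since $\Phi_\eps$ is not $\mathcal{T}$-upper semicontinuous, one cannot multiply the vector field by a cutoff of the form $\chi(\Phi_\eps(\cdot))$ and keep it $\mathcal{T}$-continuous.

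The gap is that you do not actually resolve this difficulty. The sentence ``following the additional device of \cite{ChWang} \ldots\ I would replace the naive level-cutoff by one compatible with $\mathcal{T}$'' is an acknowledgement of the problem, not a construction; restricting to a $\mathcal{T}$-compact bounded set does not help, because $\Phi_\eps$ is still not $\mathcal{T}$-continuous there, so no function of $\Phi_\eps$ can serve as a $\mathcal{T}$-continuous cutoff. The paper's device is different in kind and is the idea you are missing: no level-cutoff is used at all. Instead, when forming the $\mathcal{T}$-open cover $\{U_z\}$ of the full sublevel set $\Phi_\eps^{c_\eps+\tau}$, one separates the centers into $\mathcal{U}_1=\{U_z:c_\eps-\tau<\Phi_\eps(z)\le c_\eps+\tau\}$ and $\mathcal{U}_2=\{U_z:\Phi_\eps(z)\le c_\eps-\tau\}$; after passing to a $\mathcal{T}$-locally finite refinement $\{M_i\}$ with $M_i\subset U_{z_i}$, one assigns the constant direction $w_i:=(1+\|z_i\|)\psi_{z_i}$ when $U_{z_i}\in\mathcal{U}_1$ and $w_i:=0$ when $U_{z_i}\in\mathcal{U}_2$. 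The field $\zeta(z)=\sum_i\lambda_i(z)w_i$ is then automatically $\mathcal{T}$-locally Lipschitz with locally finite-dimensional range (the $\lambda_i$ are $\mathcal{T}$-Lipschitz, the $w_i$ are constants), satisfies $\langle\Phi'_\eps(z),\zeta(z)\rangle\ge 0$ everywhere and $\ge\tau/2$ on the strip $\Phi_{\eps,\,c_\eps-\tau}^{c_\eps+\tau}$, and the linear bound $\|\zeta(z)\|\le 2(1+\|z\|)$ yields global existence of the flow on all of $\Phi_\eps^{c_\eps+\tau}$. The ``truncation'' is thus encoded in the choice of the constant vectors $w_i$ according to where the \emph{center} $z_i$ lies---$\Phi_\eps$ is evaluated only at the fixed centers $z_i$, never at the moving point along the flow---so its lack of $\mathcal{T}$-continuity is simply irrelevant. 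With this field in hand, your reduction to a bounded region becomes unnecessary.
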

\begin{proof}
For any $\tau >0$, let us first introduce the following notations,
$$
\Phi_{\eps}^{c_{\eps}+ \tau}:=\left\{z \in E: \Phi_{\eps}(z) \leq c_{\eps} + \tau \right\},
$$
and
$$
\Phi_{\eps, \, c_{\eps} - \tau}^{c_{\eps}+ \tau}:=\left\{z \in E: c_{\eps} -\tau  < \Phi_{\eps}(z) \leq c_{\eps} + \tau \right\}.
$$
To prove this lemma, we argue by contradiction that there exists $\tau >0$ such that
\begin{align} \label{con}
\left(1 + \|z\|\right) \|\Phi_{\eps}'(z)\| \geq \tau \quad \mbox{for any} \,\, z \in \Phi_{\eps}^{c_{\eps} + \tau}.
\end{align}
Observe that, for any $z \in \Phi_{\eps}^{c_{\eps}+\tau}$, there exists $\psi_z \in E$ with $\|\psi_z\|=1$ such that
$$
\langle \Phi_{\eps}'(z), \, \psi_{z}\rangle \geq \frac 34 \|\Phi_{\eps}'(z)\|.
$$
This, together with \eqref{con}, leads to
\begin{align} \label{conti1}
\left(1 + \|z\|\right) \langle \Phi_{\eps}'(z), \, \psi_{z}\rangle > \frac{\tau}{2}.
\end{align}
It is simple to check that $\Phi_{\eps}'$ is weakly sequentially continuous on $E$, i.e. if $z_n \rightharpoonup z$ in $E$ as $n \to \infty$, then, for any $\psi \in E$, $\langle \Phi_{\eps}'(z_n), \, \psi \rangle \to \langle \Phi_{\eps}'(z), \, \psi \rangle$ in $\R$ as $n \to \infty$. Moreover, if $z_n \xrightarrow{\mathcal{T}} z$ in $E$ as $n \to \infty$, then $z_n \wto z$ in $E$ as $n \to \infty$. Thus, for any $z \in \Phi_{\eps}^{c_{\eps} + \tau}$, \eqref{conti1} implies that there is a $\mathcal{T}$-open neighborhood $U_z \subset E$ such that, for any $w \in U_z$,
\begin{align}\label{conti2}
\left(1 + \|z\|\right) \langle \Phi_{\eps}'(w), \, \psi_{z}\rangle \geq  \frac{\tau}{2}.
\end{align}
Furthermore, for any $w \in U_z$,
\begin{align} \label{conti3}
\|\left(1 + \|z\|\right) \psi_z\|=1 + \|z\|\leq 2 \left( 1 + \|w\| \right).
\end{align}
We now define that
$$
\mathcal{U}_1:=\left\{U_z: c_{\eps}-\tau < \Phi_{\eps}(z) \leq c_{\eps}+\tau\right\},
\quad \mathcal{U}_2:=\left\{U_z: \Phi_{\eps}(z) \leq c_{\eps}-\tau\right\},
$$
then $\mathcal{U}:=\mathcal{U}_1 \cup \mathcal{U}_2$ forms a $\mathcal{T}$-open covering of $\Phi_{\eps}^{c_{\eps} + \tau}$. Note that $\mathcal{U}$ is metric, hence it is paracompact, which infers that there exists a locally finite $\mathcal{T}$-open covering $\mathcal{M}:=\{M_i: i \in I\}$ of $\Phi_{\eps}^{c_{\eps} + \tau}$, and it is finer than $\mathcal{U}$, where $I$ is an index set. Thus, for any $M_i \in \mathcal{M}$, there is $U_{z_i} \in \mathcal{U}$ for some $z_{i} \in \Phi_{\eps}^{c_{\eps}+ \tau}$ such that $M_i \subset U_{z_i}$. If $U_{z_i} \in \mathcal{U}_1$, we then define that $w_i:=\left(1+ \|z_i\|\right) \psi_{z_i}$. If $U_{z_i} \in \mathcal{U}_2$, we then define that $w_i:=0$. Let $\{\lambda_i: \, i\in \mathcal{I}\}$ be a $\mathcal{T}$-Lipschitz continuous partition of unity subordinated to $\mathcal{M}$, and define
\begin{align*}
\zeta(z):=\sum_{i \in \mathcal{I}} \lambda_i(z) w_i \quad \mbox{for any} \,\, z \in \mathcal{M}.
\end{align*}
Since the covering $\mathcal{M}$ is locally finite, then, for any $z \in \mathcal{M}$, $\zeta(z)<\infty$. In addition, for any $z \in \mathcal{M}$, there is a $\mathcal{T}$-open neighborhood $V_z \subset M_i$ for some $i \in {I}$ such that $\zeta(V_z)$ is contained in a finite-dimension subspace of $E$. Since $\lambda_{i}$ is $\mathcal{T}$-Lipschitz continuous for any $i \in \mathcal{I}$, then there is $L_{z}>0$ such that
\begin{align} \label{lltz}
|||\zeta(z_1)-\zeta(z_2)||| \leq L_z |||z_1-z_2||| \quad \mbox{for any} \,\,z_1, z_2 \in V_z.
\end{align}
By the equivalence of norms in finite-dimensional spaces and \eqref{norm}, it then yields from \eqref{lltz} that
\begin{align} \label{loc}
\|\zeta(z_1)-\zeta(z_2)\| \leq L_z \|z_1-z_2\| \quad \mbox{for any} \,\,z_1, z_2 \in V_z.
\end{align}
Moreover, for any $z \in \mathcal{M}$, \eqref{conti2} and \eqref{conti3} indicate that
\begin{align} \label{negative}
\langle \Phi_{\eps}'(z), \, \zeta(z) \rangle \geq 0
\end{align}
and
\begin{align}\label{glo}
\|\zeta(z)\| \leq 2 \left(1 + \|z\|\right),
\end{align}
respectively. In particular, for any $z \in \Phi^{c_{\eps}+\tau}_{\eps, \, {c_{\eps}-\tau}}$, there holds that
\begin{align} \label{positive}
\langle \Phi_{\eps}'(z), \, \zeta(z)\rangle \geq \frac{\tau}{2}.
\end{align}
Indeed, for any $z  \in \Phi^{c_{\eps}+\tau}_{\eps, {c_{\eps}-\tau}}$, there exist $M_1, \cdots, M_k \in \mathcal{M}$ for some $k \geq 1$ such that $z \in M_i$ for any $1 \leq i \leq k$. Since $z  \in \Phi^{c_{\eps}+\tau}_{\eps, {c_{\eps}-\tau}}$, we then have that $M_i \subset U_{z_i}$ with $z_i \in \Phi^{c_{\eps}+\tau}_{\eps, \, {c_{\eps}-\tau}}$ for any $1 \leq i \leq k$. Thus, from \eqref{conti2},
$$
\langle \Phi_{\eps}'(z), \, \zeta(z)\rangle = \sum_{i=1}^k \lambda_{i}(z) \left(1+ \|z_i\| \right)\langle \Phi_{\eps}'(z), \, \psi_{z_i} \rangle \geq \frac{\tau}{2}.
$$

Let us now consider the Cauchy problem
\begin{align} \label{cauchy}
\left\{
\begin{aligned}
\frac{d}{dt} \eta(t, z) &=-\zeta( \eta(t, z)),\\
\eta(0, z)&=z.
\end{aligned}
\right.
\end{align}
Since $\zeta$ is locally Lipschitz continuous on $\mathcal{M}$, see \eqref{loc}, then, by standard theory of ordinary differential equation in Banach space, $\eta(t, z)$ exists locally in time for any $z \in \mathcal{M}$. Further, by \eqref{glo}, we know that $\eta(t, z)$ exists globally in time for any $z \in \mathcal{M}$. Furthermore, in view of \eqref{negative}, we have that
\begin{align} \label{decreflow}
\frac{d}{dt}\Phi_{\eps}(\eta(t, z))= \langle \Phi_{\eps}'(\eta(t, z)), \, \frac{d}{dt}\eta(t, z)\rangle=-\langle \Phi_{\eps}'(\eta(t ,z)), \, \zeta(\eta(t, z)) \rangle \leq 0.
\end{align}
Choosing $T>4$, we now obtain that
\begin{align} \label{decr}
\eta(T, \Phi_{\eps}^{c_{\eps}+ \tau}) \subset \Phi_{\eps}^{c_{\eps}-\tau}.
\end{align}
In fact, for any $z \in \Phi_{\eps}^{c_{\eps} + \tau}$, if there is $t_0 \in [0, T]$ such that $\eta(t_0, z) \in \Phi_{\eps}^{c_{\eps}-\tau}$, it then follows from \eqref{decreflow} that $\eta(T, z) \in \Phi_{\eps}^{c_{\eps}-\tau}$, and \eqref{decr} follows. Otherwise, there exists $z \in \Phi_{\eps}^{c_{\eps} + \tau}$ such that $\eta(t, z) \in \Phi_{\eps, \, c_{\eps}-\tau}^{c_{\eps} + \tau}$ for any $t \in [0, T]$. According to \eqref{positive}, then
$$
\langle \Phi_{\eps}'(\eta(t, z)),\, \zeta(\eta(t, z))\rangle \geq \frac{\tau}{2}.
$$
Hence
\begin{align*}
\Phi_{\eps}(\eta(T, z))&=\Phi_{\eps}(\eta(0, z)) + \int_{0}^{T} \frac{d}{dt}\Phi_{\eps}(\eta(t ,z)) \, dt \\
&=\Phi_{\eps}(z) - \int_{0}^T \langle \Phi_{\eps}'(\eta(t ,z)),\, \zeta(\eta(t, z))\rangle \, dt \\
& \leq c_{\eps} + \tau - \frac{\tau}{2} T \\
&<c_{\eps} - \tau.
\end{align*}
This is impossible, then \eqref{decr} necessarily holds. In addition, arguing as the proof of \cite[lemma 6.8]{Willem}, we are able to derive that
\begin{enumerate}
  \item [$(i)$] $\eta$ is $\mathcal{T}$-continuous;
  \item [$(ii)$] for any $(t, z) \in [0, T] \times \Phi_{\eps}^{c_{\eps}+\tau}$, there is an open neighborhood $N_{t, z}$ in the product topology of $[0, T]$ and $(E, \mathcal{T})$ such  that
      $$
      \left\{w-\eta(t, w): (t, w) \in N_{t, z}  \cap \left([0, T] \times \Phi_{\eps}^{c_{\eps} + \tau}\right)\right\}
      $$
is contained in a finite-dimensional subspace of $E$.
\end{enumerate}

We now take $z \in E \setminus E^-$ and $h \in \Gamma(z)$ such that
\begin{align} \label{upper}
\sup_{z' \in M(z)} \Phi_{\eps}(h(1, z')) \leq c_{\eps} + \tau.
\end{align}
Define $g : [0 ,1] \times M(z) \to E$ by
\begin{align*}
g(t, z'):=\left\{
\begin{aligned}
&h(2t, z'), \quad &t \in [0, 1/2],\\
&\eta(T(2t-1), h(1, z')), \quad &t \in [1/2, 1],
\end{aligned}
\right.
\end{align*}
and it is easy to check that $g$ enjoys $(h_1)$-$(h_4)$. As a result of \eqref{decr} and \eqref{upper}, we then have that
$$
\Phi_{\eps}(g(1, z')) \leq c_{\eps} -\tau,
$$
which contradicts the definition of $c_{\eps}$. Consequently, there exists a sequence $\{z_n\} \subset E$ so that
$$
\Phi_{\eps}(z_n) \leq c_{\eps}+ o_n(1),\quad \left(1 + \|z_n\|\right) \Phi_{\eps}'(z_n)=o_n(1),
$$
and the proof is completed.
\end{proof}

\begin{lem}\label{sphere}
There exist $r>0$ and $\rho>0$ such that $\Phi_{\eps}{\mid_{S_{r}^+}} \geq \rho$, where
$$
S_{r}^+:=\left\{z \in E^+: \|z\|=r \right\}.
$$
\end{lem}
\begin{proof}
From $(H_1)$ and $(H_2)$, we know that there is $c>0$ such that
$$
G(s) \leq \frac{1-\|V\|_{\infty}}{4} s^2 + c s^{p} \quad \mbox{for any} \,\, s \geq 0.
$$
Thus, by Lemmas \ref{embedding} and \ref{l2},  for any $z \in E^+$,
\begin{align*}
\Phi_{\eps}(z) & =  \frac 12 \|z\|^2+ \frac 12 \int_{\R}\int_{\R^N} V_{\eps}(x)|z|^2 \, dt dx - \int_{\R}\int_{\R^N} F_{\eps}(x, |z|) \, dtdx \\
& \geq \frac 12 \|z\|^2+ \frac 12 \int_{\R}\int_{\R^N} V_{\eps}(x)|z|^2 \, dt dx - \int_{\R}\int_{\R^N} G(|z|) \, dtdx \\
& \geq \frac{1-\|V\|_{\infty}}{4} \|z\|^2-C\|z\|^p,
\end{align*}
from which there exist $r>0$ and $\rho>0$ such that $\Phi_{\eps}{\mid_{S_{r}^+}} \geq \rho$, due to $p>2$.
\end{proof}

\begin{lem} \label{ground}
For any $\eps>0$ small, there holds that
$$
\rho \leq c_{\eps} \leq \inf_{\mathcal{N}} \Phi_{\eps},
$$
where $\rho>0$ is given in Lemma \ref{sphere},  and $c_{\eps}$ is defined by \eqref{ceps}.
\end{lem}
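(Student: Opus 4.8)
The plan is to prove the two inequalities separately: the right-hand one by a direct comparison argument on the generalized Nehari manifold, and the left-hand one by a linking/topological-degree argument tailored to the $\mathcal{T}$-topology.

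\emph{Upper bound $c_{\eps}\le\inf_{\mathcal{N}}\Phi_{\eps}$.} Fix $z\in\mathcal{N}$; then $z\in E\setminus E^-$, so $z$ is admissible in the outer infimum defining $c_{\eps}$. The constant deformation $h(t,z')\equiv z'$ lies in $\Gamma(z)$, since it trivially satisfies $(h_1)$--$(h_4)$ (the set in $(h_4)$ reduces to $\{0\}$). Hence $c_{\eps}\le\sup_{z'\in M(z)}\Phi_{\eps}(z')$. Every $z'\in M(z)$ has the form $z'=\tau z+w$ with $\tau\in\R^+$ and $w\in E^-$, so Lemma \ref{compare} gives $\Phi_{\eps}(z')\le\Phi_{\eps}(z)$. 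Therefore $c_{\eps}\le\Phi_{\eps}(z)$, and taking the infimum over $z\in\mathcal{N}$ yields the right-hand inequality. (If $\mathcal{N}=\emptyset$ there is nothing to prove; by Lemma \ref{exist} it is nonempty.)

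\emph{Lower bound $\rho\le c_{\eps}$.} Fix $z\in E\setminus E^-$ and $h\in\Gamma(z)$; it suffices, by Lemma \ref{sphere}, to produce $z'\in M(z)$ with $h(1,z')\in S_r^+$, for then $\sup_{z'\in M(z)}\Phi_{\eps}(h(1,z'))\ge\rho$, and taking infima over $h$ and over $z$ gives $c_{\eps}\ge\rho$. Without loss of generality enlarge $R_z$ from Lemma \ref{bdd} so that $R_z>r$ and $\Phi_{\eps}<0$ on $\{\|z'\|=R_z\}\cap\hat{E}(z)$; enlarging $R_z$ only strengthens the conclusion of Lemma \ref{bdd} and leaves $\hat{E}(z)$ — hence Lemma \ref{compare} — untouched. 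Since $z^+\neq 0$ and $E^+\perp E^-$, the set $M(z)$ is described by $z'=\tau z^++w$ with $\tau\ge 0$, $w\in E^-$ and $\tau^2\|z^+\|^2+\|w\|^2\le R_z^2$. Define $\Psi_t:M(z)\to\R\times E^-$ by $\Psi_t(z'):=\big(\|Ph(t,z')\|-r,\ (I-P)h(t,z')\big)$, so that a zero of $\Psi_t$ is exactly a point with $h(t,z')\in S_r^+$. On $\partial M(z)$ one has $\Phi_{\eps}(h(t,z'))\le\Phi_{\eps}(z')\le 0<\rho$ by $(h_3)$ — using $\Phi_{\eps}\le 0$ on $E^-$ (as computed in the proof of Lemma \ref{exist}) on the face $\tau=0$, and the choice of $R_z$ on the spherical face — whence $h(t,z')\notin S_r^+$ and $0\notin\Psi_t(\partial M(z))$ for all $t\in[0,1]$. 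By $(h_1)$ and $(h_4)$ the map $\mathrm{id}-h(t,\cdot)$ is $\mathcal{T}$-continuous with locally finite-dimensional range, so after a finite-dimensional reduction the Leray--Schauder-type degree $\deg(\Psi_t,\mathrm{int}\,M(z),0)$ is well defined and, by homotopy invariance, independent of $t$. At $t=0$ one has $\Psi_0(\tau z^++w)=(\tau\|z^+\|-r,\ w)$, which has the single non-degenerate zero $\tau\|z^+\|=r$, $w=0$, an interior point of $M(z)$ since $r<R_z$; hence $\deg(\Psi_0,\mathrm{int}\,M(z),0)=\pm 1$. Therefore $\deg(\Psi_1,\mathrm{int}\,M(z),0)=\pm 1\neq 0$, so $\Psi_1$ vanishes at some $z'\in M(z)$, i.e. $h(1,z')\in S_r^+$ and $\Phi_{\eps}(h(1,z'))\ge\rho$, completing the argument.

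\emph{Where the difficulty lies.} The comparison inequality is routine once Lemma \ref{compare} is available. The delicate point is the linking inequality: one is forced to run a degree argument in the strongly indefinite setting, where the deformation class $\Gamma(z)$ supplies only $\mathcal{T}$-continuity and the locally finite-rank property $(h_4)$ — not norm-continuity or compactness — and where $M(z)$ is a bounded but non-compact subset of the infinite-dimensional space $\hat{E}(z)$. Making the degree for such maps precise (the finite-dimensional reduction through $(h_4)$, the continuity in the product topology of $[0,1]$ and $(E,\mathcal{T})$ through $(h_1)$), together with the careful verification that the deformed image of $\partial M(z)$ never meets $S_r^+$, is the substantive part; this parallels the linking arguments in Willem's book and in the Kryszewski--Szulkin framework, here channelled through Lemma \ref{deformation}.
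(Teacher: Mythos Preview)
Your argument is correct and matches the paper's approach: the upper bound via the identity deformation together with Lemma \ref{compare}, and the lower bound via the Kryszewski--Szulkin degree applied to $z'\mapsto(\|Ph(t,z')\|-r,\,Qh(t,z'))$, using $(h_3)$ and $\Phi_\eps\le 0$ on $\partial M(z)$ to keep the degree well defined along the homotopy. One caution: do not ``enlarge $R_z$'', since $R_z$ enters the definition of $M(z)$ and hence of $c_\eps$; but no enlargement is needed, because $\tfrac{r}{\|z^+\|}z^+\in S_r^+\cap\hat E(z)$ has $\Phi_\eps\ge\rho>0$ by Lemma \ref{sphere}, which forces $r<R_z$ in view of Lemma \ref{bdd}.
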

\begin{proof}
We first prove that $c_{\eps} \leq \inf_{\mathcal{N}} \Phi_{\eps}$. For any $z \in \mathcal{N}$, we define that $h: [0, 1] \times M(z) \to E$ by $h(t, z')=z'$. It is simple to check that $h$ satisfies $(h_1)$-$(h_4)$. Thus, by the definition of $c_{\eps}$ and Lemma \ref{compare},
$$
c_{\eps} \leq  \sup_{z' \in M(z)} \Phi_{\eps}(h(1, z'))=\sup_{z' \in M(z)} \Phi_{\eps}(z') \leq \Phi_{\eps}(z),
$$
which implies that $c_{\eps} \leq \inf_{\mathcal{N}} \Phi_{\eps}$. We next show that $c_{\eps} \geq \rho$ for any $\eps>0$ small. To do this,  we suppose by contradiction that $c_{\eps} < \rho$ for some $\eps>0$ small. Therefore, there exist $z \in E \setminus E^-$ and $h \in \Gamma(z)$ such that
\begin{align} \label{contra11}
 \sup_{z' \in M(z)} \Phi_{\eps}(h(1, z')) <  \rho.
\end{align}
Define $H:[0, 1] \times M(z) \to E$ by
$$
H(t, z'):=\left(\|P h(t, z')\|-r\right) \frac{z^+}{\|z^+\|} + Q h(t, z'),
$$
where $r>0$ is given in Lemma \ref{sphere}. Clearly, $H$ fulfills $(h_1)$-$(h_4)$. In addition, $H(t, z')=0$ if and only if $ h(t, z') \in E^+$ and $\|h(t, z')\|=r$. We now claim that $0 \notin H([0, 1] \times \partial M(z))$. To see this, we assume contrarily that there were $(t, z') \in [0, 1] \times \partial M(z)$ such that $H(t, z') =0$, i.e. $ h(t, z') \in E^+$ and $\|h(t, z')\|=r$. It then follows from $(h_3)$ and Lemma \ref{bdd} that
$$
\Phi_{\eps}(h(t, z')) \leq \Phi_{\eps}(z') \leq 0.
$$
However, by Lemma \ref{sphere}, we know that $\Phi_{\eps}(h(t, z')) \geq \rho$. We then reach a contradiction, which in turns indicates that the claim holds. We are now able to apply the homotopy invariance of the degree provided in \cite{KrSz} and $(h_2)$ to conclude that
$$
\mbox{deg}(H(1, \cdot), \,M(z))=\mbox{deg}(H(0, \cdot), \,M(z))=1,
$$
which implies that there exists $\hat{z} \in M(z)$ such that $H(1, \hat{z})=0$. Hence, from Lemma \ref{sphere},
$$
\sup_{z' \in M(z)} \Phi_{\eps}(h(1, z')) \geq \Phi_{\eps}(h(1, \hat{z})) \geq \rho,
$$
which contradicts \eqref{contra11}. Consequently, we have that $c_{\eps} \geq \rho$ for any $\eps>0$ small, and the proof is completed.
\end{proof}

\begin{lem} \label{bounded}
For any $\eps>0$ small,  if $\{z_n\} \subset E$ satisfies that
\begin{align*}
\Phi_{\eps}(z_n) \leq c_{\eps}+o_n(1), \,\, \,\left(1 + \|z_n\| \right)\Phi_{\eps}'(z_n)=o_n(1),
\end{align*}
then $\{z_n\}$ is bounded in $E$.
\end{lem}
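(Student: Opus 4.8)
The plan is to argue by contradiction. Suppose that for some small $\eps>0$ there is a sequence $\{z_n\}\subset E$ with $\Phi_\eps(z_n)\le c_\eps+o_n(1)$ and $(1+\|z_n\|)\Phi_\eps'(z_n)=o_n(1)$ but, along a subsequence, $\|z_n\|\to\infty$; put $w_n:=z_n/\|z_n\|$, so $\|w_n\|=1$. From the hypotheses $\Phi_\eps'(z_n)z_n=o_n(1)$ and $\Phi_\eps'(z_n)(z_n^+-z_n^-)=o_n(1)$. Combining the first with $\Phi_\eps(z_n)\le c_\eps+o_n(1)$ and the pointwise inequality \eqref{ar} gives
\[
0\le\Phi_\eps(z_n)=\int_{\R}\int_{\R^N}\Big(\tfrac12 f_\eps(x,|z_n|)|z_n|^2-F_\eps(x,|z_n|)\Big)\,dtdx+o_n(1)\le c_\eps+o_n(1),
\]
so $\Phi_\eps(z_n)$ is bounded; moreover, since $(Lz_n,z_n)_2=\|z_n^+\|^2-\|z_n^-\|^2$, the same relation together with $\|z_n\|_2\le\|z_n\|$ (Lemma \ref{l2}) yields the uniform bound $\int_{\R}\int_{\R^N} f_\eps(x,|z_n|)|w_n|^2\,dtdx\le 1+\|V\|_\infty+o_n(1)$. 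Finally, using $F_\eps\ge 0$ and Lemma \ref{l2} in $0\le\Phi_\eps(z_n)$ one obtains $\|z_n^-\|^2\le\frac{1+\|V\|_\infty}{1-\|V\|_\infty}\|z_n^+\|^2+o_n(1)$, hence $\|z_n^+\|\to\infty$ and $\|w_n^+\|\ge c>0$ for all large $n$ (otherwise $\|z_n\|$ would be bounded). After a further subsequence, $w_n\rightharpoonup w$ in $E$, $w_n\to w$ in $L^r_{\mathrm{loc}}$ for the exponents of Lemma \ref{embedding}, and $w_n\to w$ a.e.

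The first case is that $\{w_n\}$ does not vanish in the sense of Lemma \ref{concentration}; after a translation in $(t,x)$ --- legitimate since $L$ is constant-coefficient, $\Lambda$ is bounded and $\|V\|_\infty<1$ --- we may assume $w\neq0$. On $\{w\neq0\}$ one has $|z_n|=\|z_n\|\,|w_n|\to\infty$ a.e., and since $(H_3)$ and $(H_4)$ force $g(s)\to\infty$, the integrand $f_\eps(x,|z_n|)|w_n|^2$ would diverge a.e.\ on $\{w\neq0\}\cap\{\chi(\eps\cdot)<1\}$, contradicting the uniform bound above via Fatou's lemma; hence $w\equiv0$ a.e.\ on $\{\chi(\eps\cdot)<1\}$, so $w$ is supported where $\chi(\eps\cdot)=1$, on which $f_\eps(x,\cdot)=\tilde g(\cdot)\le\mu$. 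Dividing $\Phi_\eps'(z_n)=o_n(1)$ by $\|z_n\|$ and letting $n\to\infty$ --- the nonlinear term converges because $\tilde g(|z_n|)\to\mu$ where $w\neq0$ while $w_n\to0$ where $w=0$, with $\mu|w_n|$ an $L^1_{\mathrm{loc}}$ dominant --- shows that $w$ is a weak solution of the linear equation $Lw+(V_\eps-\mu\,\mathbf{1}_{\{\chi(\eps\cdot)=1\}})w=0$. Its coefficient has $L^\infty$-norm $\le\|V\|_\infty+\mu=\frac{1+\|V\|_\infty}{2}<1$, while $\sigma(L)\subset\R\setminus(-1,1)$ (Lemma \ref{spectrum}) gives $(Lw,w^+-w^-)_2\ge\|w\|_2^2$; testing with $w^+-w^-$ then forces $\big(1-\tfrac{1+\|V\|_\infty}{2}\big)\|w\|_2^2\le 0$, i.e.\ $w=0$, a contradiction. (If the escaping translation sends $y_n\to\infty$, the limit nonlinearity is entirely the capped one $\tilde g$ and the limit potential is a weak-$*$ limit of bounded functions $V_\eps(\cdot+y_n)$, so exactly the same argument applies.)

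The remaining case is that $\{w_n\}$ vanishes, so $w_n\to0$ in $L^q$ for every $2<q<2(N+2)/N$, and by Lemma \ref{lp} also $w_n^+\to0$ in such $L^q$. This is the delicate point: without the Ambrosetti--Rabinowitz condition and with a strongly indefinite geometry, the natural scaling direction is $z_n^+$ modulo $E^-$, not $z_n$ itself, so one argues as in a Jeanjean--Struwe scheme adapted to the generalized Nehari manifold. Fix $R_0>0$, put $\tau_n:=R_0/\|z_n\|\to0$, and let $v_n\in E^-$ be the maximizer of $v\mapsto\Phi_\eps(\tau_n z_n^++v)$ (unique by the strict concavity \eqref{concave}); from $\Phi_\eps(a+v)\le\tfrac12(1+\|V\|_\infty)\|a\|^2-\tfrac12(1-\|V\|_\infty)\|v\|^2$ for $a\in E^+,\,v\in E^-$ one checks $\|v_n\|$ is bounded uniformly in $n$. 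On the one hand, the estimate established in the proof of Lemma \ref{compare} --- which holds for an arbitrary $z\in E\setminus E^-$, not only for $z\in\mathcal N$, and uses only the monotonicity $(H_4)$ --- gives $\Phi_\eps(\tau_n z_n^++v_n)\le\Phi_\eps(z_n)+\tfrac12\Phi_\eps'(z_n)\big((\tau_n^2-1)z_n+2\tau_n(v_n-\tau_n z_n^-)\big)$; since $\tau_n\to0$, $\|v_n\|$ is bounded and $(1+\|z_n\|)\|\Phi_\eps'(z_n)\|=o_n(1)$, the error term is $o_n(1)$, whence $\Phi_\eps(\tau_n z_n^++v_n)\le c_\eps+o_n(1)$. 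On the other hand, $\Phi_\eps(\tau_n z_n^++v_n)\ge\Phi_\eps(R_0 w_n^+)$, and from \eqref{h1h2} with $\gamma:=\tfrac{1-\|V\|_\infty}{2}$, the $L^q$-decay of $w_n^+$ and $\|w_n^+\|\ge c$ one gets $\Phi_\eps(R_0 w_n^+)\ge\tfrac{c^2(1-\|V\|_\infty)}{4}R_0^2+o_n(1)$. Letting $n\to\infty$ and then $R_0\to\infty$ contradicts the upper bound. Thus vanishing is impossible as well, the assumption $\|z_n\|\to\infty$ fails, and $\{z_n\}$ is bounded in $E$. The main obstacle is precisely this last case --- reconciling the indefinite scaling with the absence of (AR), which forces one to control the energy of the rescaled competitor through the monotonicity structure of $\tfrac12 f_\eps(x,s)s^2-F_\eps(x,s)$ rather than by a cheap Ambrosetti--Rabinowitz estimate.
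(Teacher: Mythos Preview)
Your Case 2 (vanishing) argument is correct and is essentially the paper's strategy for the corresponding step: it exploits the pointwise inequality from the proof of Lemma~\ref{compare} (which, as you observe, holds for arbitrary $z\in E\setminus E^-$, not only $z\in\mathcal N$) together with the Cerami control $(1+\|z_n\|)\|\Phi_\eps'(z_n)\|=o_n(1)$ to get $\Phi_\eps(R_0 w_n^+)\le c_\eps+o_n(1)$, and the $L^q$--decay of $w_n^+$ (via Lemma~\ref{lp}) then forces a contradiction for $R_0$ large. Your lower bound $\|w_n^+\|\ge c>0$ is also derived just as in the paper.

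Your Case 1 (nonvanishing) has a genuine gap. Suppose the Lions centres $(\tau_n,y_n)$ have $\{y_n\}$ bounded; after the time translation, $w_n\rightharpoonup w\neq 0$, and your Fatou argument correctly forces $w\equiv 0$ on the bounded open set $\{\chi(\eps\cdot)<1\}=(\Lambda^{\delta_0})_\eps$. You then try to pass to a limit equation $Lw+(V_\eps-\mu\,\mathbf 1_{\{\chi(\eps\cdot)=1\}})w=0$ and test with $w^+-w^-$. The obstruction is the nonlinear remainder
\[
\int_{\{\chi(\eps\cdot)<1\}} f_\eps(x,|z_n|)\,w_n\cdot\phi\,dt\,dx,\qquad \phi=w^+-w^-.
\]
On this region $f_\eps$ carries the unbounded $(1-\chi)g$ part, while $w^+-w^-=Pw-Qw$ is produced by \emph{nonlocal} Fourier multipliers (Lemma~\ref{lp}), so it is not supported in $\{\chi(\eps\cdot)=1\}$ even though $w$ is. The only information available is $w_n\to 0$ in $L^q(\{\chi<1\})$ with no rate, together with $\int f_\eps(x,|z_n|)|w_n|^2\le C$. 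Neither suffices: the direct estimate via $(H_2)$ leaves a factor $\|z_n\|^{p-2}\|w_n\|_{L^p(\{\chi<1\})}^{p-1}$, an indeterminate $\infty\cdot 0$; and the weighted Cauchy--Schwarz bound $C^{1/2}\bigl(\int_{\{\chi<1\}}f_\eps(x,|z_n|)|\phi|^2\bigr)^{1/2}$ blows up since $f_\eps(x,|z_n|)\to\infty$ there. Hence the limit equation is not established and the conclusion $w=0$ does not follow. (Your parenthetical for $|y_n|\to\infty$ is fine, since then the translated nonlinearity is eventually the capped $\tilde g$ on every compact set.)

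The paper avoids this difficulty by localising \emph{before} the Lions dichotomy: it multiplies $\xi_n=z_n/\|z_n\|$ by a spatial cutoff $\varphi$ onto $N_1(\overline{(\Lambda^{\delta_0})_\eps})$ and proves nonvanishing of (the $E^+$--component of) $\varphi\xi_n$ via exactly your Case~2 reasoning. Consequently the nonvanishing necessarily occurs inside $(\Lambda^{\delta_0})_\eps\subset\{\chi(\eps\cdot)<1\}$, where $F_\eps(x,s)/s^2\to\infty$ by $(H_3)$; a single Fatou step on $\Phi_\eps(\bar z_n)/\|z_n\|^2$ then yields the contradiction, and no limit equation is needed. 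Replacing your global Lions dichotomy by this localised one repairs your argument.
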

\begin{proof}
We argue indirectly that $\{z_n\}$ were unbounded in $E$ and assume that $\|z_n\| \to \infty$ as $n \to \infty$. Define $\xi_{n}:=\frac{z_n}{\|z_n\|}$, and let $\varphi \in C^{\infty}_0(\R^N)$ be such that
\begin{align} \label{defvphi}
\varphi(x):=\left\{
\begin{aligned}
&1,  \quad x \in \overline{(\Lambda^{\delta_0})_{\eps}}, \\
&0,  \quad x \notin N_1(\overline{(\Lambda^{\delta_0})_{\eps}}),
\end{aligned}
\right.
\end{align}
where
\begin{align} \label{defn1}
N_1(\overline{(\Lambda^{\delta_0})_{\eps}}):=\left\{x \in \R^N: \mbox{dist}(x, \, \overline{(\Lambda^{\delta_0})_{\eps}}) <1\right\},
\end{align}
and the constant $\delta_0>0$ is given by \eqref{vloc}. Here the definition of the cutoff function $\varphi$ is inspired by \cite{DX}. Set $\xi_n':=\varphi \xi_n$, then $\{\xi_n'\}$ is bounded in $E$. Moreover, for any $n \in \mathbb{N}^+$, we have that
\begin{align} \label{smalleps}
\|\xi_n'-\xi_n\|=o_{\eps}(1) .
\end{align}
We now claim that there exist $T>0$ and a sequence $\{\tau_n\} \subset \R$ such that
\begin{align} \label{lions1}
\liminf_{n \to \infty} \int_{B(\tau_n, \, T)} \int_{N_1(\overline{(\Lambda^{\delta_0})_{\eps}})} |\xi_n'^+|^2 \,dtdx>0.
\end{align}
To prove this claim, we suppose by contradiction that
\begin{align} \label{vanishing1}
\liminf_{n \to \infty} \sup_{\tau \in \R}\int_{B(\tau, \, T)} \int_{N_1(\overline{(\Lambda^{\delta_0})_{\eps}})} |\xi_n'^+|^2 \,dtdx=0.
\end{align}
By Lions' concentration compactness lemma \cite[Lemma I.1]{Lions}, it then follows from \eqref{vanishing1} that $\xi_n'^+\to 0$ in $L^q$ as $n \to \infty$ for any $2 < q < 2(N+2)/N$. Hence, from \eqref{h1h2}, for any $s \geq 0$,
\begin{align} \label{F}
F_{\eps}(x, s \xi_n'^+)=o_n(1).
\end{align}
Noticing that $\Phi_{\eps}'(z_n)z_n =o_n(1)$ and $\Phi_{\eps}'(z_n) z_n^-=o_n(1)$,  and applying the same arguments as the proof of Lemma \ref{compare}, we can obtain that, for any $s \geq 0$,
$$
\Phi_{\eps}(z_n) \geq \Phi_{\eps}(s \xi_n^+) +o_n(1).
$$
This, jointly with \eqref{smalleps} and \eqref{F}, shows that, for any $n \in \mathbb{N}^+$ large and $\eps>0$ small,
\begin{align} \label{contradiction1}
\begin{split}
c_{\eps} + 2\geq \Phi_{\eps}(z_n) + 1 &\geq \Phi_{\eps}(s \xi_n^+) + \frac 12\geq \Phi_{\eps}(s \xi_n'^+)  + \frac 14\\
& \geq \frac {s^2}{2} \|\xi_n'^+\|^2 + \frac {s^2}{2} \int_{\R}\int_{\R^N}V_{\eps}(x) |\xi_n'^+|^2 \, dtdx + \frac 18 \\
& \geq \frac {s^2}{2} \left(1 - \|V\|_{\infty}\right) \|\xi_n'^+\|^2.
\end{split}
\end{align}
Observe that
\begin{align*}
\frac 12 \left(\|z_n^+\|^2 - \|z_n^-\|^2\right) + \frac 12 \int_{\R} \int_{\R^N} V_{\eps}(x)|z_n|^2 \, dtdx \geq \Phi_{\eps}(z_n)
=\Phi_{\eps}(z_n) - \frac 12 \Phi_{\eps}'(z_n)z_n+ o_n(1),
\end{align*}
where we used the fact that $F(x, s) \geq 0$ for any $x \in \R^N$ and $s \geq 0$. In addition, by \eqref{notar},
\begin{align*}
\Phi_{\eps}(z_n) - \frac 12 \Phi_{\eps}'(z_n)z_n = \int_{\R} \int_{\R^N} \frac 12 f_{\eps}(x, |z_n|)|z_n|^2 - F_{\eps}(x, |z_n|) \, dtdx \geq 0.
\end{align*}
As a result, from two inequalities above and Lemma \ref{l2},
$$
\|z_n^+\|^2 \geq  \left(\frac{1 - \|V\|_{\infty}}{1 + \|V\|_{\infty}} \right)\|z_n^-\|^2 +o_n(1),
$$
which indicates that
$$
\frac{2}{1-\|V\|_{\infty}} \|z_n^+\|^2 \geq \|z_n^+\|^2+ \|z_n^-\|^2 +o_n(1) = \|z_n\|^2 + o_n(1).
$$
Thus
$$
\|\xi_n^+\|^2 \geq \frac {1-\|V\|_{\infty}}{2} + o_n(1).
$$
Consequently, for any $n \in \mathbb{N}^+$ large and $\eps>0$ small, it follows from \eqref{smalleps} that
$$
\|\xi_n'^+\|^2 \geq \frac {1-\|V\|_{\infty}}{4}.
$$
We then reach a contradiction from \eqref{contradiction1} for $s \geq 0$ large enough.
This in turns implies that the claim holds, and we obtain that
\begin{align} \label{lions2}
\liminf_{n \to \infty} \int_{B(\tau_n, \, T)} \int_{N_1(\overline{(\Lambda^{\delta_0})_{\eps}})} |\xi_n|^2 \,dtdx>0,
\end{align}
because of $|\xi_n'| \leq |\xi_n|$. It then yields from Lemma \ref{embedding} that $\bar{\xi}_n(t, x):=\xi_n(t+\tau_n, x) \rightharpoonup \xi \neq 0$ in $E$ as $n \to \infty$. Furthermore, we have that $\bar{\xi}_n \to \xi$ a.e. on $\R \times \R^N$ as $n \to \infty$. Define
$$
\Omega_2:=\left\{(t, x) \in \R \times \R^N: \xi(x, t) \neq 0\right\},
$$
then $\bar{z}_n(t, x):=z_n(t+\tau_n, x) \to \infty$ as $n \to \infty$ for any $(t, x) \in \Omega_2$. Thus, by Fatou's lemma and $(H_3)$,
\begin{align*}
0 \leq \limsup_{n \to \infty} \frac{\Phi_{\eps}(\bar{z}_n)}{\|\bar{z}_n\|^2}& =\frac 12 \limsup_{n \to \infty} \left( \|\bar{\xi}^+_n\|^2-\|\bar{\xi}_n^-\|^2\right)
+ \limsup_{n \to \infty} \int_{\R}\int_{\R^N} V_{\eps}(x) \frac{|\bar{z}_n|^2}{\|\bar{z}_n\|^2} \, dt dx \\
& \quad - \liminf_{n \to \infty} \int_{\R}\int_{\R^N} \frac{F_{\eps}(x, |\bar{z}_n|)}{|\bar{z}_n|^2} \frac{|\bar{z}_n|}{\|\bar{z}_n\|^2} \, dtdx \\
& \leq  \frac 12 + \frac {\|V\|_{\infty}}{2} - \int \int_{\Omega_2} \liminf_{n \to \infty} \frac{F_{\eps}(x, |\bar{z}_n|)}{|\bar{z}_n|^2} \frac{|\bar{z}_n|}{\|\bar{z}_n\|^2} \, dtdx \\
&= -\infty,
\end{align*}
which is impossible. This gives that $\{z_n\}$ is bounded in $E$, and we have completed the proof.
\end{proof}

\begin{lem} \label{existence}
For any $\eps >0$ small, \eqref{equation} admits a ground state $z_{\eps} \in E$.
\end{lem}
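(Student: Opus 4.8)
The plan is to produce, via the linking argument, a bounded almost-critical sequence for $\Phi_\eps$ at the minimax level $c_\eps$, pass to a nontrivial weak limit after a translation in the time variable, and then recognize that limit as a minimizer of $\Phi_\eps$ over the generalized Nehari manifold $\mathcal{N}$. First I would apply Lemma \ref{deformation} to obtain $\{z_n\}\subset E$ with $\Phi_\eps(z_n)\le c_\eps+o_n(1)$ and $(1+\|z_n\|)\Phi'_\eps(z_n)=o_n(1)$, and Lemma \ref{bounded} to see that $\{z_n\}$ is bounded in $E$. Since $\Phi'_\eps$ is weakly sequentially continuous (as noted in the proof of Lemma \ref{deformation}) and $L$, $V_\eps$, $f_\eps$ are all invariant under translations in $t$, the weak limit in $E$ of any $t$-translate of $\{z_n\}$ is a critical point of $\Phi_\eps$, hence a solution of \eqref{equation}.

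The crux is to show that $\{z_n\}$ does not vanish, i.e.\ that there are $T,R>0$, a fixed $y_0\in\R^N$, and $\{\tau_n\}\subset\R$ with $\liminf_n\int_{B(\tau_n,T)}\int_{B(y_0,R)}|z_n|^2\,dt\,dx>0$. I would argue by contradiction: if $\sup_{(\tau,y)}\int_{B(\tau,T)}\int_{B(y,R)}|z_n|^2\to0$ for all $T,R$, then Lemma \ref{concentration} gives $z_n\to0$ in $L^q$ for every $q\in(2,2(N+2)/N)$; testing $\Phi'_\eps(z_n)$ against $z_n^+-z_n^-$, using $(Lz_n,z_n^+-z_n^-)_2=\|z_n\|^2$ and $\|z_n^+-z_n^-\|=\|z_n\|$, Lemma \ref{l2}, Remark \ref{remlp}, the bound \eqref{h1h21} with $\gamma$ chosen so that $\|V\|_\infty+\gamma<1$, and Lemma \ref{lp}, one obtains $\bigl(1-\|V\|_\infty-\gamma\bigr)\|z_n\|^2=o_n(1)$, so that $z_n\to0$ in $E$ and thus $\Phi_\eps(z_n)\to0$, which is incompatible with $c_\eps\ge\rho>0$ (Lemma \ref{ground}). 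As in the proof of Lemma \ref{bounded}, the fact that $f_\eps$ is bounded by $\mu=\tfrac12(1-\|V\|_\infty)$ outside $\Lambda_\eps$ confines the possible concentration in $x$ to a bounded set, so one only needs to translate in $t$: setting $\bar z_n(t,x):=z_n(t+\tau_n,x)$ and using Lemma \ref{embedding}, we get $\bar z_n\rightharpoonup z_\eps\ne0$ in $E$ with $\Phi'_\eps(z_\eps)=0$.

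It remains to identify $z_\eps$ as a ground state. If $z_\eps\in E^-$, then $0=\Phi'_\eps(z_\eps)z_\eps=(Lz_\eps,z_\eps)_2+\int V_\eps|z_\eps|^2-\int f_\eps(x,|z_\eps|)|z_\eps|^2\le-(1-\|V\|_\infty)\|z_\eps\|^2<0$, a contradiction; hence $z_\eps\notin E^-$, so $z_\eps\in\mathcal{N}$ and $\Phi_\eps(z_\eps)\ge\inf_{\mathcal{N}}\Phi_\eps\ge c_\eps$ by Lemma \ref{ground}. On the other hand, since by \eqref{notar} the integrand $\tfrac12 f_\eps(x,|z|)|z|^2-F_\eps(x,|z|)$ is nonnegative and $F_\eps$ is $t$-independent, Fatou's lemma applied to $\{\bar z_n\}$ together with $\Phi'_\eps(\bar z_n)\bar z_n=o_n(1)$ yields
$$
\Phi_\eps(z_\eps)=\int_{\R}\int_{\R^N}\Bigl(\tfrac12 f_\eps(x,|z_\eps|)|z_\eps|^2-F_\eps(x,|z_\eps|)\Bigr)\,dt\,dx\le\liminf_{n\to\infty}\Bigl(\Phi_\eps(\bar z_n)-\tfrac12\Phi'_\eps(\bar z_n)\bar z_n\Bigr)\le c_\eps .
$$
Therefore $\Phi_\eps(z_\eps)=c_\eps=\inf_{\mathcal{N}}\Phi_\eps$, so $z_\eps$ is a ground state of \eqref{equation}.

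The main obstacle is the non-vanishing step. Because $\Phi_\eps$ is strongly indefinite, there is no autonomous limit system to appeal to and the problem is genuinely not translation invariant in $x$, so the standard splitting / limit-equation mechanism for restoring compactness is unavailable; one has to exploit the truncation of the nonlinearity outside $\Lambda_\eps$ to localize the concentration region in $x$, and then combine Lemma \ref{concentration} with the coercivity estimate above and the positivity $c_\eps\ge\rho>0$. Once non-vanishing is in hand, the remaining steps — passing to the weak limit, checking $z_\eps\notin E^-$, and the Fatou comparison with $c_\eps\le\inf_{\mathcal{N}}\Phi_\eps$ — are routine consequences of the lemmas already established.
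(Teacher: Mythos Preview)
Your overall strategy matches the paper's: obtain the bounded sequence from Lemmas~\ref{deformation} and~\ref{bounded}, extract a nontrivial weak limit after a $t$-translation, and close with the Fatou comparison against $c_\eps\le\inf_{\mathcal N}\Phi_\eps$. The final steps (checking $z_\eps\notin E^-$ and the Fatou bound) are fine and coincide with the paper's.

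The gap is in the non-vanishing step. Your contradiction argument is correct but only shows that $\{z_n\}$ does not vanish over all of $\R\times\R^N$: it produces $(\tau_n,y_n)$ with $\liminf_n\int_{B(\tau_n,T)}\int_{B(y_n,R)}|z_n|^2>0$, not a \emph{fixed} $y_0$. The sentence ``as in the proof of Lemma~\ref{bounded}\ldots\ confines the possible concentration in $x$ to a bounded set'' is doing real work that you have not written out; moreover, Lemma~\ref{bounded} localizes via the inequality $\Phi_\eps(z_n)\ge\Phi_\eps(s\xi_n^+)+o_n(1)$, which is tied to the unboundedness assumption there and does not transfer here. The paper avoids this two-stage route by localizing \emph{first}: it sets ${z_n^+}':=\varphi z_n^+$ with the cutoff $\varphi$ from \eqref{defvphi}, and proves that ${z_n^+}'$ itself fails to vanish. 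The point is that in $\Phi'_\eps(z_n)(z_n^+-z_n^-)=o_n(1)$, using the pointwise identity $z_n\cdot(z_n^+-z_n^-)=|z_n^+|^2-|z_n^-|^2$ one bounds the nonlinear term by $\int f_\eps|z_n^+|^2$; outside $(\Lambda^{\delta_0})_\eps$ one has $f_\eps=\tilde g\le\mu=\tfrac12(1-\|V\|_\infty)$, which is absorbed into the left-hand side, while inside $|z_n^+|=|{z_n^+}'|$, and this piece is $o_n(1)$ under the vanishing hypothesis. That yields $\|z_n\|\to0$ directly, and since ${z_n^+}'$ has bounded $x$-support, only a $t$-translation is needed. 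Your global non-vanishing can be salvaged, but you then need a separate argument ruling out $|y_n|\to\infty$ (for instance by showing, as in \eqref{z1neq0}, that the limit equation $Lz+V_\infty z=\tilde g(|z|)z$ has only the trivial solution); without it the passage to a weak limit in a $t$-only translate is not justified.
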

\begin{proof}
By Lemma \ref{deformation}, we know that there exists a sequence $\{z_n\} \subset E$ such that
\begin{align*}
\Phi_{\eps}(z_n) \leq c_{\eps}+o_n(1), \,\, \,\left(1 + \|z_n\| \right)\Phi_{\eps}'(z_n)=o_n(1).
\end{align*}
It follows from Lemma \ref{bounded} that $\{z_n\}$ is bounded in $E$. We now set that
$$
{z_n^+}':=\varphi z_n^+,
$$
where $\varphi$ is given by \eqref{defvphi}.  We claim that there exist $T>0$ and a sequence $\{\tau_n\} \subset \R $ such that
\begin{align} \label{nonvanishing}
\liminf_{n \to \infty}\int_{B(\tau_n, \,T)} \int_{N_1(\overline{(\Lambda^{\delta_0})_{\eps}})} |{z_n^+}'|^2 \, dtdx >0,
\end{align}
where $N_1(\overline{(\Lambda^{\delta_0})_{\eps}})$ is given by \eqref{defn1}. Indeed, if the claim were false, then, by Lions' concentration compactness lemma \cite[Lemma I.1]{Lions},
\begin{align} \label{convlp}
{z_n^+}'\to 0 \,\, \mbox{in} \,\,L^q \,\, \mbox{for any} \,\, 2 < q < 2(N+2)/N.
\end{align}
Since $\Phi_{\eps}'(z_n)(z_n^+-z_n^-) =o_n(1)$, then
\begin{align*}
\|z_n\|^2 + \int_{\R}\int_{\R^N} V_{\eps}(x)z_n \cdot (z_n^+ - z_n^-) \,dtdx  &=\int_{\R}\int_{\R^N} f_{\eps}(x, z_n) z_n \cdot (z_n^+ - z_n^-) \, dtdx +o_n(1) \\
&\leq \int_{\R}\int_{\R^N} f_{\eps}(x, z_n) |z_n^+|^2 \, dtdx  +o_n(1).
\end{align*}
This, together with \eqref{deff}, yields  that
\begin{align*}
\|z_n\|^2  -\|V\|_{\infty} \int_{\R}\int_{\R^N}|z_n||z_n^+ - z_n^-| \,dtdx &\leq \int_{\R} \int_{\R^N}   \left(1-\chi(\eps x)\right) g(|z_n|)|z_n^+|^2\,dtdx \\
& \quad + \frac{1-\|V\|_{\infty}}{2} \int_{\R}\int_{\R^N}|z_n^+|^2 \,dtdx + o_n(1).
\end{align*}
By H\"older's inequality and Lemma \ref{l2} and Remark \ref{remlp}, then
\begin{align} \label{l1}
\frac{1-\|V\|_{\infty}}{2} \|z_n\|^2 \leq \int_{\R} \int_{\R^N} \left(1-\chi(\eps x)\right) g(|z_n|)|z_n^+|^2\,dtdx + o_n(1).
\end{align}
From $(H_1)$ and $(H_2)$, we know that there exist $r>0$ and $c_r>0$ such that
$$
g(s) \leq \frac{1-\|V\|_{\infty}}{4} \quad \mbox{for any} \, \, 0 \leq s <r, \quad g(s) \leq c_{r} s^{p-2} \quad \mbox{for any}\,\, s \geq r.
$$
Therefore, by using \eqref{convlp},  H\"older's inequality, and Lemma \ref{bounded}, we conclude from \eqref{l1} that
\begin{align*}
\frac{1-\|V\|_{\infty}}{4} \|z_n\|^2
& \leq \int \int_{\{(t, \, x) \in \R \times \overline{(\Lambda^{\delta_0})_{\eps}}: \, |z_n(t, x)| \geq r\}} g(|z_n|)|z_n^+|^2\,dtdx + o_n(1)\\
&\leq \int \int_{\{(t, \, x) \in \R \times N_1(\overline{(\Lambda^{\delta_0})_{\eps}}): \, |z_n(t, x)| \geq r\}} g(|z_n|)|{z_n^+}'|^2 \,dtdx +o_n(1)\\
&\leq c_{r} \int \int_{\{(t, \, x) \in \R \times N_1(\overline{(\Lambda^{\delta_0})_{\eps}}): \, |z_n(t, x)| \geq r\}} |z_n|^{p-2}|{z_n^+}'|^2 \,dtdx +o_n(1)\\
&\leq c_{r} \|z_n\|_{p}^{p-2} \|{z_n^+}'\|_p^2 +o_n(1)\\
&=o_n(1).
\end{align*}
This indicates that $\|z_n\|=o_n(1)$, then $c_{\eps}=o_n(1)$, which is impossible, see Lemma \ref{ground}. Hence \eqref{nonvanishing} holds, and we have that
\begin{align} \label{non1}
\liminf_{n \to \infty} \int_{B(\tau_n, \, T)} \int_{N_1(\overline{(\Lambda^{\delta_0})_{\eps}})} |z_n^+|^2 \,dtdx>0.
\end{align}
We now define that $\bar{z}_n(t, x):=z_n(t+\tau_n, x)$, then \eqref{non1} implies that $\bar{z}_n^+ \rightharpoonup z_{\eps}^+ \neq 0$ and $\bar{z}_n \rightharpoonup z_{\eps} \neq 0$ in $E$ as $n \to \infty$. By Lemma \ref{embedding}, we get that $\bar{z}_n \to z_{\eps}$ a.e. on $\R \times \R^N$ as $n \to \infty$. In addition, there holds that $\Phi_{\eps}'(z_{\eps})=0$. Consequently, by Fatou's lemma and \eqref{notar},
\begin{align*}
c_{\eps} & \geq  \liminf_{n \to \infty}\left(\Phi_{\eps}(\bar{z}_n) - \frac 12 \Phi_{\eps}'(\bar{z}_n)\bar{z}_n\right) \\
&= \liminf_{n \to \infty} \int_{\R} \int_{\R^N} \frac 12 f_{\eps}(x, |\bar{z}_n|)|\bar{z}_n|^2-F_{\eps}(x, |\bar{z}_n|) \, dtdx \\
& \geq \int_{\R} \int_{\R^N} \frac 12 f_{\eps}(x, |z_{\eps}|)|z_{\eps}|^2-F_{\eps}(x, |z_{\eps}|) \, dtdx \\
& = \Phi_{\eps}(z_{\eps})-\frac 12 \Phi_{\eps}'(z_{\eps})z_{\eps} \\
&= \Phi_{\eps}(z_{\eps}),
\end{align*}
which, along with Lemma \ref{ground}, gives that $c_{\eps}=\inf_{\mathcal{N}} \Phi_{\eps}=\Phi_{\eps}(z_{\eps})$. Hence we have completed the proof.
\end{proof}

\subsection{Exponential decay of ground states}

In what follows, we shall deduce exponential decay of ground states to \eqref{equation}.

\begin{lem} \label{cepsbdd}
For any $\eps >0$ small, there exists $c_0>0$ such that $c_{\eps} \leq c_0$.
\end{lem}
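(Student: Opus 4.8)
The plan is to bound $c_{\eps}$ from above by testing the minimax expression \eqref{ceps} against one fixed, $\eps$-independent configuration. First I would fix once and for all a nonzero element $z_0 \in E^+$ (recall $E^+ \neq \{0\}$, the problem being strongly indefinite). By Lemma \ref{bdd} there are $\eps_{z_0}>0$ and $R_{z_0}>0$, \emph{both independent of $\eps$}, such that for every $0<\eps<\eps_{z_0}$ the set $M(z_0)$ in \eqref{defmz} is well defined. The identity map $h_0(t,z'):=z'$ belongs to $\Gamma(z_0)$: it is $\mathcal{T}$-continuous, $h_0(0,z')=z'$, $\Phi_{\eps}(h_0(t,z'))=\Phi_{\eps}(z')$ for all $t$, and $z'-h_0(t,z')\equiv 0$ lies in a finite-dimensional subspace, so $(h_1)$-$(h_4)$ hold (this is the same observation used in the proof of Lemma \ref{ground}). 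Hence, from the definition \eqref{ceps},
\[
c_{\eps} \le \sup_{z' \in M(z_0)} \Phi_{\eps}\bigl(h_0(1,z')\bigr) = \sup_{z' \in M(z_0)} \Phi_{\eps}(z') \qquad \text{for all } 0<\eps<\eps_{z_0}.
\]

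Next I would estimate the right-hand side uniformly in $\eps$. Let $z'\in M(z_0)$, so $z'=\tau z_0+w$ with $\tau\in\R^+$, $w\in E^-$ and $\|z'\|\le R_{z_0}$. Since $z_0\in E^+$ and $w\in E^-$ are orthogonal, $\|z'\|^2=\tau^2\|z_0\|^2+\|w\|^2$, whence $\tau^2\|z_0\|^2\le\|z'\|^2$. Using the nonnegativity $F_{\eps}(x,s)\ge 0$, the bound $\|V\|_{\infty}<1$ from $(V_1)$, and Lemma \ref{l2} (which gives $\|z'\|_2\le\|z'\|$),
\[
\Phi_{\eps}(z') \le \frac12\tau^2\|z_0\|^2 + \frac12 \int_{\R}\int_{\R^N} V_{\eps}(x)|z'|^2\,dtdx \le \frac12\|z'\|^2 + \frac12\|V\|_{\infty}\|z'\|_2^2 \le \frac12\bigl(1+\|V\|_{\infty}\bigr)R_{z_0}^2 .
\]

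Combining the two displays yields $c_{\eps}\le c_0:=\frac12(1+\|V\|_{\infty})R_{z_0}^2$ for every $0<\eps<\eps_{z_0}$, which is the assertion (and in fact with a constant $c_0$ independent of $\eps$, as is needed for the exponential-decay argument). Alternatively one could invoke $c_{\eps}\le\inf_{\mathcal N}\Phi_{\eps}$ from Lemma \ref{ground} and bound the energy of the element of $\mathcal N$ produced by Lemma \ref{exist} for this same $z_0$ by the identical computation. There is no genuine obstacle here; the only point requiring care is that the test element $z_0$ and the radius $R_{z_0}$ be chosen independently of $\eps$, which is exactly what Lemma \ref{bdd} guarantees — the estimate itself is then immediate from the nonnegativity of the penalized nonlinearity and the a priori norm constraint built into $M(z_0)$.
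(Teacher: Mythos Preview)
Your proof is correct and follows essentially the same approach as the paper: fix a test element $z_0\in E^+$, use that the radius $R_{z_0}$ from Lemma~\ref{bdd} is $\eps$-independent, and bound $c_\eps\le\sup_{M(z_0)}\Phi_\eps$ uniformly. The paper routes the first inequality through the Nehari manifold via Lemmas~\ref{exist} and~\ref{ground} and then restricts attention to the ray $\{\tau z_0^+:\tau\ge 0\}$, whereas you estimate directly on all of $M(z_0)$ using $F_\eps\ge 0$ and the norm constraint, obtaining the explicit constant $c_0=\tfrac12(1+\|V\|_\infty)R_{z_0}^2$; this is a mild streamlining of the same idea rather than a genuinely different argument.
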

\begin{proof}
For $z_0 \in E \setminus E^-$ given, it follows from Lemmas \ref{exist} and \ref{ground} that, for any $\eps>0$ small,
$$
c_{\eps} \leq \sup_{\tau \in \R^+} \Phi_{\eps} (\tau z_0^+ ).
 $$
In view of Lemma \ref{bdd}, for any $\eps>0$ small, we deduce that there exists $\tau_0>0$ such that $\Phi_{\eps}(\tau z_0^+) \leq 0$ for any $\tau \geq \tau_0$, which then shows that
\begin{align} \label{bdd1}
c_{\eps} \leq \sup_{\tau \in [0, \tau_0]} \Phi_{\eps}(\tau z_0^+).
\end{align}
Thus, for any $\eps>0$ small, it yields from \eqref{bdd1} that $c_{\eps} \leq c_0$, and the proof is completed.
\end{proof}

\begin{lem} \label{unibdd}
Let $z_{\eps}$ be a ground state to \eqref{equation}, then there exist $c_1$, $c_2>0$ such that
$$
c_1 \leq \|z_{\eps}\| \leq c_2.
$$
\end{lem}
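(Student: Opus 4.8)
The plan is to establish the two inequalities separately. The lower bound is a short consequence of the sign of the nonlinearity together with the already-established estimate $c_\eps\ge\rho$, while the upper bound is the substantive part, which I would obtain by a blow-up argument patterned on the proof of Lemma~\ref{bounded}.

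For the lower bound: a ground state $z_\eps$ lies in $\mathcal{N}$ and satisfies $\Phi_\eps(z_\eps)=c_\eps$, so using $F_\eps\ge 0$, $\|V\|_\infty<1$ and Lemma~\ref{l2},
$$
c_\eps=\Phi_\eps(z_\eps)\le \frac12\big(\|z_\eps^+\|^2-\|z_\eps^-\|^2\big)+\frac{\|V\|_\infty}{2}\big(\|z_\eps^+\|^2+\|z_\eps^-\|^2\big)\le \frac{1+\|V\|_\infty}{2}\,\|z_\eps^+\|^2\le \|z_\eps\|^2 .
$$
Together with $c_\eps\ge\rho$ from Lemma~\ref{ground}, this yields $\|z_\eps\|\ge\sqrt{\rho}=:c_1$, a constant independent of $\eps$.

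For the upper bound I would argue by contradiction: if it failed uniformly there would be $\eps_n\to 0^+$ and ground states $z_n:=z_{\eps_n}$ of \eqref{equation} with $\|z_n\|\to\infty$. Set $\xi_n:=z_n/\|z_n\|$, so $\|\xi_n\|=1$. Since $\Phi_{\eps_n}'(z_n)z_n=0$, the manipulation used in the proof of Lemma~\ref{bounded}, based on \eqref{notar}, $F_{\eps_n}\ge 0$ and Lemma~\ref{l2}, gives $\|z_n^+\|^2\ge\frac{1-\|V\|_\infty}{1+\|V\|_\infty}\|z_n^-\|^2$, hence $\|\xi_n^+\|^2\ge\frac{1-\|V\|_\infty}{2}$. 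Because $z_n\in\mathcal{N}$, Lemma~\ref{compare} applied with $w=-z_n^-$ and $\tau=s/\|z_n\|$ yields, for every $s\ge 0$,
$$
\Phi_{\eps_n}(s\,\xi_n^+)\le \Phi_{\eps_n}(z_n)=c_{\eps_n}\le c_0 ,
$$
where $c_0$ is the $\eps$-free bound of Lemma~\ref{cepsbdd}. I would then split, exactly as in Lemma~\ref{bounded}, according to whether the truncated sequence $\xi_n':=\varphi_n\xi_n$ — with $\varphi_n$ the cut-off \eqref{defvphi} attached to $\eps_n$ — vanishes in the sense of Lions. If it does, Lemma~\ref{concentration} gives $\xi_n'^+\to 0$ in $L^q$ for $2<q<2(N+2)/N$; combining this with \eqref{h1h2} for a sufficiently small $\gamma$ and with $\|\xi_n'^+-\xi_n^+\|=o_{\eps_n}(1)$ forces $\Phi_{\eps_n}(s\,\xi_n^+)\ge c\,s^2-o_n(1)s^p$ for some $c>0$, which for a fixed large $s$ contradicts $\Phi_{\eps_n}(s\,\xi_n^+)\le c_0$. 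If instead non-vanishing holds, then after a translation $\tau_n$ in the $t$-variable one has $\bar\xi_n(t,x):=\xi_n(t+\tau_n,x)\rightharpoonup\xi\neq 0$ in $E$ and a.e. (Lemma~\ref{embedding}); writing $\Omega:=\{(t,x):\xi(t,x)\neq 0\}$ and $\bar z_n(t,x):=z_n(t+\tau_n,x)$, one has $|\Omega|>0$, $|\bar z_n|\to\infty$ on $\Omega$, and, since $\Phi_{\eps_n}$ is invariant under $t$-translations, Fatou's lemma and $(H_3)$ give
$$
0=\lim_{n\to\infty}\frac{\Phi_{\eps_n}(\bar z_n)}{\|\bar z_n\|^2}\le \frac{1+\|V\|_\infty}{2}-\int_{\Omega}\liminf_{n\to\infty}\frac{F_{\eps_n}(x,|\bar z_n|)}{|\bar z_n|^2}\cdot\frac{|\bar z_n|^2}{\|\bar z_n\|^2}\,dtdx=-\infty ,
$$
a contradiction. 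Hence $\|z_\eps\|\le c_2$ for some $\eps$-free $c_2>0$.

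The hard part is the non-vanishing alternative: one must guarantee that $\xi$ is supported in the region where the modified nonlinearity $f_\eps$ still coincides with the superquadratic $g$, so that $(H_3)$ genuinely forces the Fatou integral to diverge. This is exactly the role of the cut-off $\varphi_\eps$ of \eqref{defvphi} (used in the spirit of \cite{DX}), which confines the relevant mass to $N_1(\overline{(\Lambda^{\delta_0})_\eps})$ of \eqref{defn1}; the delicate point, handled as in the proof of Lemma~\ref{bounded}, is to control this localization — in particular that $\|\xi_n'-\xi_n\|=o_{\eps_n}(1)$ and that the concentration point may be chosen in the $t$-variable alone — uniformly as $\eps_n\to 0^+$.
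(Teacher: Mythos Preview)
Your proposal is correct and follows essentially the same approach as the paper. For the lower bound the paper argues by contradiction (if $\|z_\eps\|\to 0$ along a subsequence then $c_\eps=o_\eps(1)$, contradicting $c_\eps\ge\rho$), whereas your explicit inequality $c_\eps\le\|z_\eps\|^2$ gives the sharper constant $c_1=\sqrt{\rho}$ directly; for the upper bound the paper simply writes ``arguing as the proof of Lemma~\ref{bounded}'' together with $c_\eps\le c_0$ from Lemma~\ref{cepsbdd}, and your write-up is precisely a detailed unpacking of that reference (with Lemma~\ref{compare} invoked directly since $z_{\eps_n}\in\mathcal{N}$ exactly, rather than only up to $o_n(1)$).
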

\begin{proof}
Since, for any $\eps >0$ small, $c_{\eps} \geq \rho$,  see Lemma \ref{deformation}, then there exists $c_1>0$ such that $\|z_{\eps}\| \geq c_1$. Otherwise, we have that $c_{\eps}=o_{\eps}(1)$, which is impossible. On the other hand,  for any $\eps>0$ small, Lemma \ref{cepsbdd} indicates that $\Phi_{\eps}(z_{\eps})=c_{\eps} \leq c_0$. In addition, we know that $\Phi_{\eps}'(z_{\eps})z_{\eps}=0$, because $z_{\eps}$ is a ground state to \eqref{equation}. Thus, arguing as the proof of Lemma \ref{bounded},  we are able to prove that there exists $c_2>0$ such that $\|z_{\eps}\| \leq c_2$. Hence the proof is completed.
\end{proof}

\begin{lem} \label{bddbr}
Let $z_{\eps}$ be a ground state to \eqref{equation}, then $z_{\eps} \in B^q$, and
$$
\|z_{\eps}\|_{B^q} \leq C \quad \mbox{for any} \,\, q \geq 2,
$$
where the Banach space $B^q$ is defined by \eqref{defbq}.
\end{lem}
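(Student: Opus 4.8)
The plan is to observe that \eqref{equation} is precisely of the form covered by Lemma \ref{regularity}, and then to feed in the uniform-in-$\eps$ bounds already at our disposal. First I would put the equation in the required shape: take the (time-independent) matrix potential $V(t,x):=V_\eps(x)\,I$, where $I$ is the $(2M)\times(2M)$ identity matrix; by $(V_1)$ and $V_\eps(x)=V(\eps x)$ we have $\|V\|_\infty=\|V\|_\infty<1$, in particular bounded independently of $\eps$. Next take $H(t,x,z):=F_\eps(x,|z|)$; using $F_\eps(x,s)=\int_0^s f_\eps(x,\tau)\tau\,d\tau$ and $\nabla_z|z|=z/|z|$ one computes $\nabla_z H(t,x,z)=f_\eps(x,|z|)\,z$, a map that is continuous also at $z=0$ since $f_\eps(x,0)=0$ by $(H_1)$. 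Hence the Euler--Lagrange equation $\Phi_\eps'(z_\eps)=0$, that is \eqref{equation}, reads exactly $L z_\eps+V(t,x)z_\eps=\nabla_z H(t,x,z_\eps)$, and $z_\eps$, being a critical point of $\Phi_\eps$ on $E$, is a weak solution in $E$ in the sense of Lemma \ref{regularity}.

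Then I would check the growth hypothesis of Lemma \ref{regularity}. By \eqref{h1h21} with the choice $\gamma=1$ there is a constant $c_1>0$, independent of both $\eps$ and $x$, such that $f_\eps(x,s)\leq 1+c_1 s^{p-2}$ for all $x\in\R^N$ and $s\geq 0$; consequently $|\nabla_z H(t,x,z)|=f_\eps(x,|z|)\,|z|\leq |z|+c_1|z|^{p-1}$, with $2<p<2(N+2)/N$ by $(H_2)$. Thus all hypotheses of Lemma \ref{regularity} are met, with data $\|V\|_\infty$, $c_1$, $p$, $q$ that do not depend on $\eps$. Applying the lemma yields $z_\eps\in B^q$ for every $q\geq 2$ together with the estimate $\|z_\eps\|_{B^q}\leq C(\|V\|_\infty,\|z_\eps\|,c_1,p,q)$.

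Finally I would remove the remaining dependence on $\|z_\eps\|$ by invoking Lemma \ref{unibdd}: for all $\eps>0$ small one has $\|z_\eps\|\leq c_2$ with $c_2$ independent of $\eps$. Since the constant furnished by Lemma \ref{regularity} may be taken monotone (nondecreasing) in the norm argument, we conclude $\|z_\eps\|_{B^q}\leq C(\|V\|_\infty,c_2,c_1,p,q)=:C$, a constant depending only on $q$ (and the fixed data $\|V\|_\infty$, $c_1$, $p$, $c_2$) but not on $\eps$, which is the asserted bound.

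I do not expect a genuine obstacle here; the only point that needs care is the bookkeeping of constants, namely verifying that every quantity entering the constant of Lemma \ref{regularity} is controlled uniformly in $\eps$. The potential's sup norm is handled by $V_\eps(x)=V(\eps x)$, the nonlinearity's growth constant by the $x$-uniform bound \eqref{h1h21}, and the energy norm of the ground state by Lemma \ref{unibdd}; together these give the uniform $B^q$-bound.
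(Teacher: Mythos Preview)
Your proposal is correct and follows essentially the same route as the paper: the paper's own proof is a one-line appeal to Lemma~\ref{regularity} (together with the iteration behind it in \cite[Lemma A.5]{DX}), and you have spelled out precisely how the hypotheses of that lemma are verified for \eqref{equation} and why the resulting constant is uniform in $\eps$ via Lemma~\ref{unibdd}. The only mild assumption you add is monotonicity of the constant $C(\cdot,\|z\|,\cdot,\cdot,\cdot)$ in $\|z\|$, which is indeed how such bootstrap constants arise, so no issue there.
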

\begin{proof}
This lemma can be proved by using Lemmas \ref{regularity}-\ref{est2.7}, and the iteration technique shown in the proof of \cite[Lemma A.5]{DX}.
\end{proof}

\begin{lem} \label{sl}
Let $z_{\eps}$ be a ground state to \eqref{equation}, then there exist a number $m \in \mathbb{N}^+$, $m$ nontrivial functions $z_1, \cdots, z_m \in E$, and $m$ sequences $\{(\tau_{\eps, 1}, \, y_{\eps, 1})\}, \cdots,\{(\tau_{\eps, m}, \, y_{\eps, m})\} \subset \R \times \R^N$ such that, up to subsequences if necessary,
\begin{enumerate}
\item [$(i)$] $\eps y_{\eps, k} \to y_{k} \in \Lambda^{\delta_0}  \,\, \mbox{in} \,\,\R^N \,\,\mbox{as}  \,\, \eps \to 0^+ \,\, \mbox{for any} \,\, 1 \leq k \leq m$ and $\left|\tau_{\eps, k_1} - \tau_{\eps, k_2}\right| \to \infty$ or $\left|y_{\eps, k_1} -y_{\eps, k_2}\right| \to \infty$ for any $1\leq k_1 \neq k_2 \leq m$, where $\delta_0>0$ is given by \eqref{vloc};
\item [$(ii)$] there holds that
\begin{align} \label{bl}
z_{\eps}- \sum_{k=1}^{m}z_k(\cdot-\tau_{\eps, k}, \, \cdot-y_{\eps, k}) =o_{\eps}(1)  \,\, \mbox{in} \,\, E,
\end{align}
where, for any $1 \leq k \leq m$, $z_k$ is a nontrivial solution to the system
$$
Lz + V(y_{k})z=f(y_k, |z|)z.
$$
\end{enumerate}
\end{lem}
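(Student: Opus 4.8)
The plan is to run an iterative profile (bubbling) decomposition of the family $\{z_{\eps}\}$ as $\eps\to0^+$ and to show that each extracted profile is, in the limit, a nontrivial solution of one of the \emph{autonomous} systems $Lz+V(y)z=f(y,|z|)z$ with $y$ in the trapping region. Throughout I use that $\|z_{\eps}\|$ is bounded below and above (Lemma~\ref{unibdd}), that $\Phi'_{\eps}(z_{\eps})=0$, and that both $L$ and the norm $\|\cdot\|$ are invariant under translations in $(t,x)$ (since $L$ has constant coefficients), so every translate of an element of $E$ again lies in $E$ with the same norm. First I would establish non-vanishing: if $\sup_{(\tau,y)}\int_{B(\tau,T)}\int_{B(y,R)}|z_{\eps}|^2\,dtdx\to0$ for all $T,R>0$, then Lemma~\ref{concentration} forces $z_{\eps}\to0$ in $L^q$ for $2<q<2(N+2)/N$, and the identity $c_{\eps}=\Phi_{\eps}(z_{\eps})-\tfrac12\Phi'_{\eps}(z_{\eps})z_{\eps}=\int_{\R}\int_{\R^N}(\tfrac12 f_{\eps}(x,|z_{\eps}|)|z_{\eps}|^2-F_{\eps}(x,|z_{\eps}|))\,dtdx$ together with \eqref{h1h21}, Lemma~\ref{l2} and Lemma~\ref{embedding} would give $c_{\eps}\to0$, contradicting $c_{\eps}\ge\rho$ (Lemma~\ref{ground}). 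Hence there are $T,R>0$ and $\{(\tau_{\eps,1},y_{\eps,1})\}$ with $\liminf_{\eps\to0^+}\int_{B(\tau_{\eps,1},T)}\int_{B(y_{\eps,1},R)}|z_{\eps}|^2\,dtdx>0$; passing to a subsequence, $w^1_{\eps}:=z_{\eps}(\cdot+\tau_{\eps,1},\cdot+y_{\eps,1})\rightharpoonup z_1$ in $E$ with $z_1\ne0$ (the non-vanishing passes to the weak limit via the compact embedding of Lemma~\ref{embedding}) and $w^1_{\eps}\to z_1$ a.e.

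Next I would locate the concentration point. Since $\eps x\to0$ for fixed $x$, $w^1_{\eps}$ solves $Lw+V(\eps\cdot+\eps y_{\eps,1})w=f(\eps\cdot+\eps y_{\eps,1},|w|)w$. If $\{\eps y_{\eps,1}\}$ were unbounded, then $\mathrm{dist}(\eps x+\eps y_{\eps,1},\Lambda)\to\infty$ on bounded sets, so $\chi(\eps\cdot+\eps y_{\eps,1})\to1$ and $f(\eps\cdot+\eps y_{\eps,1},s)\to\tilde g(s)$ locally uniformly, while $V(\eps\cdot+\eps y_{\eps,1})$ has a subsequential weak-$*$ limit $V_\infty$ with $\|V_\infty\|_\infty\le\|V\|_\infty$; passing to the limit gives $Lz_1+V_\infty z_1=\tilde g(|z_1|)z_1$. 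Testing with $z_1^+-z_1^-$, using $(Lz_1,z_1^+-z_1^-)_2=\|z_1\|^2$, $\|z_1^\pm\|_2\le\|z_1\|_2\le\|z_1\|$ (Remark~\ref{remlp}, Lemma~\ref{l2}) and $\tilde g\le\mu=\tfrac{1-\|V\|_\infty}{2}$, we get $\|z_1\|^2\le(\|V\|_\infty+\mu)\|z_1\|^2=\tfrac{1+\|V\|_\infty}{2}\|z_1\|^2$, forcing $z_1=0$, a contradiction. Hence $\{\eps y_{\eps,1}\}$ is bounded and, along a subsequence, $\eps y_{\eps,1}\to y_1$; then $V(\eps\cdot+\eps y_{\eps,1})\to V(y_1)$ and $f(\eps\cdot+\eps y_{\eps,1},s)\to f(y_1,s)$ pointwise, and passing to the limit in the weak formulation (dominated convergence via \eqref{h1h21}, the $L^p$-bound of $w^1_{\eps}$, and a.e.\ convergence) shows that $z_1$ solves $Lz+V(y_1)z=f(y_1,|z|)z$. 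If $\mathrm{dist}(y_1,\Lambda)\ge\delta_0$, then $\chi(y_1)=1$, $f(y_1,\cdot)=\tilde g$, and the same estimate forces $z_1=0$; hence $y_1\in\Lambda^{\delta_0}$.

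For the iteration I would first record a uniform lower bound: any nontrivial solution $z$ of an autonomous system $Lz+V(y)z=f(y,|z|)z$ satisfies $\|z\|\ge m_0>0$ with $m_0$ independent of $y$, since, fixing $\gamma>0$ with $\|V\|_\infty+\gamma<1$, testing with $z^+-z^-$ and using \eqref{h1h21}, Lemma~\ref{l2} and Lemma~\ref{lp} gives $(1-\|V\|_\infty-\gamma)\|z\|^2\le C\|z\|^p$, so $\|z\|\ge m_0:=((1-\|V\|_\infty-\gamma)/C)^{1/(p-2)}$. Set $z^{(1)}_{\eps}:=z_{\eps}-z_1(\cdot-\tau_{\eps,1},\cdot-y_{\eps,1})$. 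By an argument identical to the concluding one below (with $m$ replaced by $1$), if $z^{(1)}_{\eps}$ vanishes then $\|z^{(1)}_{\eps}\|\to0$ and the process stops; otherwise one extracts $\{(\tau_{\eps,2},y_{\eps,2})\}$ and $z_2\ne0$. Because $z^{(1)}_{\eps}(\cdot+\tau_{\eps,1},\cdot+y_{\eps,1})\rightharpoonup0$ and weak convergence in $E$ is stable under translations converging to a finite vector, we must have $|\tau_{\eps,2}-\tau_{\eps,1}|\to\infty$ or $|y_{\eps,2}-y_{\eps,1}|\to\infty$; with this separation the $\psi_{\eps,1}:=z_1(\cdot-\tau_{\eps,1},\cdot-y_{\eps,1})$ contribution drops out when $\Phi'_{\eps}(z_{\eps})$ is tested against translates centred at $(\tau_{\eps,2},y_{\eps,2})$, and the previous paragraph applies verbatim to give $\eps y_{\eps,2}\to y_2\in\Lambda^{\delta_0}$ with $z_2$ solving the corresponding autonomous system. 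Iterating, a standard induction yields the mutual separation of all the centres and the asymptotic disjointness of the profiles $\psi_{\eps,k}:=z_k(\cdot-\tau_{\eps,k},\cdot-y_{\eps,k})$ (so $\langle\psi_{\eps,k},\psi_{\eps,l}\rangle\to0$ for $k\ne l$ and $\langle\psi_{\eps,k},z^{(j)}_{\eps}\rangle\to0$), whence $\|z_{\eps}\|^2=\sum_{k=1}^{j}\|z_k\|^2+\|z^{(j)}_{\eps}\|^2+o_{\eps}(1)$; since $\|z_{\eps}\|\le c_2$ (Lemma~\ref{unibdd}) and each $\|z_k\|\ge m_0$, the process terminates after $m\le c_2^2/m_0^2$ steps.

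Finally I would close the remainder $r_{\eps}:=z_{\eps}-\sum_{k=1}^{m}\psi_{\eps,k}$ in $E$. When the iteration stops, $r_{\eps}$ vanishes, so $r_{\eps}\to0$ in $L^q$ for $2<q<2(N+2)/N$ by Lemma~\ref{concentration}. Testing $0=\langle\Phi'_{\eps}(z_{\eps}),r_{\eps}^+-r_{\eps}^-\rangle$ and inserting $z_{\eps}=\sum_k\psi_{\eps,k}+r_{\eps}$: each $\psi_{\eps,k}$ solves $L\psi_{\eps,k}=-V(y_k)\psi_{\eps,k}+f(y_k,|\psi_{\eps,k}|)\psi_{\eps,k}$ exactly, the differences $(V(y_k)-V_{\eps})\psi_{\eps,k}$ and $(f(y_k,\cdot)-f_{\eps})\psi_{\eps,k}$ tend to $0$ in the relevant norms by dominated convergence, a Brezis--Lieb-type expansion allows one to replace $f_{\eps}(x,|z_{\eps}|)z_{\eps}$ by $\sum_k f(y_k,|\psi_{\eps,k}|)\psi_{\eps,k}+f_{\eps}(x,|r_{\eps}|)r_{\eps}$ up to $o_{\eps}(1)$, and $\int_{\R}\int_{\R^N}f_{\eps}(x,|r_{\eps}|)r_{\eps}\cdot(r_{\eps}^+-r_{\eps}^-)\,dtdx\le\gamma\|r_{\eps}\|^2+o_{\eps}(1)$ by \eqref{h1h21}, Lemma~\ref{l2}, Lemma~\ref{lp} and the $L^q$-smallness of $r_{\eps}$. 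Collecting terms gives $(1-\|V\|_\infty-\gamma)\|r_{\eps}\|^2\le o_{\eps}(1)$, hence $\|r_{\eps}\|\to0$, which is \eqref{bl}. The step I expect to be the main obstacle is precisely this last one, together with the norm almost-orthogonality used in the previous paragraph: both demand a careful Brezis--Lieb-type bookkeeping of the many cross terms produced by profiles whose mutual $(t,x)$-distances diverge while the coefficients $V_{\eps},f_{\eps}$ themselves vary with $\eps$. By contrast, the identification of the concentration points inside $\Lambda^{\delta_0}$ is the easy part, following cleanly from the truncation $\tilde g\le\mu$ and $\|V\|_\infty<1$.
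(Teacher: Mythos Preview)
Your proposal is correct and follows essentially the same approach as the paper: non-vanishing via the energy identity and Lemma~\ref{concentration}, localisation of $\eps y_{\eps,k}$ in $\Lambda^{\delta_0}$ by testing the limiting equation with $z_k^+-z_k^-$ and using $\tilde g\le\mu$, a uniform lower bound on each profile from the same test, and termination via the Hilbert-space Pythagorean identity combined with Lemma~\ref{unibdd}. The only cosmetic difference is that the paper packages the Brezis--Lieb bookkeeping you flag as the ``main obstacle'' into the statement that each remainder $z_{\eps,k}$ itself solves an approximate equation $Lz_{\eps,k}+V_{\eps}z_{\eps,k}=f_{\eps}(x,|z_{\eps,k}|)z_{\eps,k}+o_{\eps}(1)$ (its \eqref{zeps1}), which it then tests with $z_{\eps,k}^+-z_{\eps,k}^-$ at each step rather than deferring everything to the end as you do.
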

\begin{proof}
We first claim that
\begin{align} \label{non2}
\liminf_{\eps \to 0^+} \sup_{(\tau, \,y) \in \R \times \R^N} \int_{B(\tau, \, T)} \int_{B(y, \, R)} |z_{\eps}|^2 \, dtdx >0.
\end{align}
Indeed, if \eqref{non2} were false,  then, by Lemma \ref{concentration}, we get that $z_{\eps} \to 0$ in $L^p$ as $\eps \to 0^+$ for any $2 <p<2(N+2)/N$. Note that
\begin{align*}
c_{\eps}&=\Phi_{\eps}(z_{\eps})- \frac 12 \Phi_{\eps}'(z_{\eps})z_{\eps} = \int_{\R} \int_{\R^N} \frac 12 f_{\eps}(x, |z_{\eps}|)|z_{\eps}|^2 -F_{\eps}(x, |z_{\eps}|) \, dtdx.
\end{align*}
As a consequence of \eqref{h1h21} and \eqref{h1h2}, we then obtain that $c_{\eps}=o_{\eps}(1)$, which is impossible, see Lemma \ref{ground}. Hence the claim holds, and we know that there exists a sequence $\{(\tau_{\eps, 1}, \, y_{\eps, 1})\} \subset \R \times \R^N$ such that
\begin{align} \label{nontrivial1}
\liminf_{\eps \to 0^+} \int_{B(\tau_{\eps, 1}, \, T)} \int_{B(y_{\eps, 1}, \, R)} |z_{\eps}|^2 \, dtdx >0.
\end{align}
Define
$$
\bar{z}_{\eps}(t, x):=z_{\eps}(t+ \tau_{\eps, 1}, x+y_{\eps, 1}),
$$
it then follows from \eqref{nontrivial1} and Lemma \ref{embedding} that $\bar{z}_{\eps} \wto z_1 \neq 0$ in $E$ as $n \to \infty$. Since $z_{\eps}$ is a ground state to \eqref{equation}, then
\begin{align} \label{zeps}
L\bar{z}_{\eps} + V_{\eps}(x + y_{\eps, 1})\bar{z}_{\eps}=f_{\eps}(x+y_{\eps, 1}, |\bar{z}_{\eps}|)\bar{z}_{\eps}.
\end{align}
We now deduce that $\eps y_{\eps, 1} \to y_{1} \in \Lambda^{\delta_0}$ in $\R^N$ as $\eps \to 0^+$. To do this, let us first prove that $\{\eps y_{\eps, 1}\} \subset \R^N$ is bounded. We assume contrarily that $|\eps y_{\eps, 1}| \to \infty$ in $\R$ as $\eps \to 0^+$. Thus, from \eqref{zeps}, we have that
\begin{align} \label{z1}
Lz_1 + V_1z_1=\tilde{g}(|z_1|)z_1,
\end{align}
where $V_1:=\lim_{\eps \to 0^+} V_{\eps}(x + y_{\eps, 1})$, and $\tilde{g}$ is defined by \eqref{tildeg}. By taking the scalar product to \eqref{z1} with $z_1^+-z_1^-$ and integrating on $\R \times \R^N$, then
\begin{align} \label{z1neq0}
\begin{split}
0&=\|z_1\|^2+V_1 \int_{\R}\int_{\R^N}z_1 \cdot \left(z_1^+- z_1^-\right) \, dtdx -\int_{\R}\int_{\R^N} \tilde{g}(|z_1|)z_1 \cdot \left(z_1^+-z_1^-\right) \, dtdx \\
&\geq \|z_1\|^2-\|V\|_{\infty} \|z_1\|^2-\frac{1-\|V\|_{\infty}}{2}\|z_1\|^2\\
&=\frac{1-\|V\|_{\infty}}{2}\|z_1\|^2,
\end{split}
\end{align}
where we used H\"older's inequality and Lemma \ref{l2}.  As a result of \eqref{z1neq0}, we then obtain that $z_1=0$, which is a contradiction. Thus we know that $\{\eps y_{\eps, 1}\}$ is bounded in $\R^N$. We now suppose that $\eps y_{\eps, 1} \to y_1$ in $\R^N$ as $\eps \to 0^+$. If $y_1 \notin \Lambda^{\delta_0}$,  we conclude from \eqref{zeps} that
\begin{align} \label{blcon}
Lz_1 + \tilde{V}_1z_1=\tilde{g}(|z_1|)z_1,
\end{align}
where $\tilde{V}_1:=\lim_{\eps \to 0^+} V_{\eps}(x + y_{\eps, 1})$. By \eqref{blcon}, we are able to reach a contradiction as before. Accordingly, $\eps y_{\eps, 1} \to y_{1} \in \Lambda^{\delta_0}$ in $\R^N$ as $\eps \to 0^+$. It then follows from \eqref{zeps} that
\begin{align} \label{z11}
Lz_1 + V(y_1) z_1=f(y_1, |z_1|)z_1.
\end{align}
Taking the scalar product to \eqref{z11} with $z_1^+-z_1^-$ and integratimg on $\R \times \R^N$, we find that
\begin{align} \label{lowerbd}
\begin{split}
\|z_1\|^2+V(y_1) \int_{\R}\int_{\R^N}z_1 \cdot \left(z_1^+- z_1^-\right) \, dtdx &=\int_{\R}\int_{\R^N} f(y_1, |z_1|)z_1 \cdot \left(z_1^+-z_1^-\right) \, dtdx \\
& \leq \frac{1-\|V\|_{\infty}}{2}\|z_1\|^2 + c\|z_1\|^p,
\end{split}
\end{align}
where we used the inequality \eqref{h1h21} with $\gamma=\frac{1-\|V\|_{\infty}}{2}$, H\"older's inequality, and Lemmas \ref{embedding}-\ref{lp}. Notice that
$$
\left|V(y_1) \int_{\R}\int_{\R^N}z_1 \cdot \left(z_1^+- z_1^-\right) \, dtdx \right| \leq \|V\|_{\infty} \|z_1\|^2,
$$
then \eqref{lowerbd} leads to
$$
\frac{1-\|V\|_{\infty}}{2} \|z_1\|^2 \leq c\|z_1\|^p,
$$
from which we derive that there exists $c_p>0$ such that $\|z_1\| \geq c_p$.

We now define that
$$
z_{\eps, 1}(t, x):=z_{\eps}(t, x)-z_1(t-\tau_{\eps, 1}, x-y_{\eps, 1}).
$$
If $\|z_{\eps, 1}\|=o_{\eps}(1)$, then the proof is completed. Otherwise, there holds that $\lim_{\eps \to 0^+} \|z_{\eps, 1}\| >0$. Since $\bar{z}_{\eps} \wto z_1$ in $E$ as $n \to \infty$, then
\begin{align} \label{lb1}
\|z_{\eps, 1}\|^2=\|z_{\eps}\|^2 -\|z_1\|^2 +o_{\eps}(1).
\end{align}
Noting that \eqref{zeps} and \eqref{z11}, by standard arguments, we get that
\begin{align}\label{zeps1}
Lz_{\eps, 1}+V_{\eps}(x)z_{\eps, 1}=f_{\eps}(x, |z_{\eps, 1}|)z_{\eps, 1} +o_{\eps}(1).
\end{align}
Taking the scalar product to \eqref{zeps1} with $z_{\eps,1}^+-z_{\eps,1}^-$ and integrating on $\R \times \R^N$, we conclude that
$$
\|z_{\eps,1}\|^2+\int_{\R}\int_{\R^N}V_{\eps}(x)z_{\eps,1} \cdot (z_{\eps,1}^+- z_{\eps,1}^-) \, dtdx =\int_{\R}\int_{\R^N} f_{\eps}(x, |z_{\eps, 1}|)z_{\eps, 1} \cdot (z_{\eps,1}^+-z_{\eps,1}^-) \, dtdx + o_{\eps}(1).
$$
Similarly, by using \eqref{h1h21}, H\"older inequality, and Lemmas \ref{l2}-\ref{lp}, we can deduce that
\begin{align} \label{con2}
\frac{1-\|V\|_{\infty}}{2} \|z_{\eps, 1}\|^2 \leq c \|z_{\eps, 1}\|^{p}_p+ o_{\eps}(1).
\end{align}
Recall that $\lim_{\eps \to 0^+} \|z_{\eps, 1}\| >0$, it then follows from \eqref{con2} and Lemma \ref{concentration} that
$$
\liminf_{\eps \to 0^+} \sup_{(\tau, \,y) \in \R \times \R^N} \int_{B(\tau, \, T)} \int_{B(y, \, R)} |z_{\eps, 1}|^2 \, dtdx >0.
$$
Thus there exists a sequence $\{(\tau_{\eps, 2}, \, y_{\eps, 2})\} \subset \R \times \R^N$ such that
\begin{align} \label{nonzero1}
\liminf_{\eps \to 0^+} \int_{B(\tau_{\eps, 2}, \, T)} \int_{B(y_{\eps, 2}, \, R)} |z_{\eps, 1}|^2 \, dtdx >0,
\end{align}
from which we know that
\begin{align} \label{concentra}
\liminf_{\eps \to 0^+} \int_{B(\tau_{\eps, 2}-\tau_{\eps, 1}, \, T)} \int_{B(y_{\eps, 2}-y_{\eps, 1}, \, R)} |z_{\eps, 1}(t + \tau_{\eps, 1}, x + y_{\eps, 1})|^2 \, dtdx >0.
\end{align}
Since $z_{\eps, 1}(\cdot + \tau_{\eps, 1}, \cdot + y_{\eps, 1}) \wto 0$ in $E$ as $\eps \to 0^+$, then \eqref{concentra} and Lemma \ref{embedding} yields that
$$
\left|\tau_{\eps, 1}-\tau_{\eps, 2}\right| \to \infty \,\, \mbox{or} \, \, \left|y_{\eps, 1}-y_{\eps, 2}\right| \to \infty \quad \mbox{as} \, \, \eps \to 0^+.
$$
Define
$$
\bar{z}_{\eps, 1}(t, x):=z_{\eps, 1}(t+ \tau_{\eps, 2}, x+y_{\eps, 2}).
$$
It then follows from \eqref{nonzero1} and Lemma \ref{embedding} that $\bar{z}_{\eps, 1} \wto z_2 \neq 0$ in $E$ as $n \to \infty$. In addition,  from \eqref{zeps1}, we obtain that
$$
L\bar{z}_{\eps, 1}+V_{\eps}(x+ y_{\eps, 2}) \bar{z}_{\eps, 1}=f_{\eps}(x, |\bar{z}_{\eps, 1}|)\bar{z}_{\eps, 1} +o_{\eps}(1).
$$
By a similar way, we can deduce that $\eps y_{\eps, 2} \to y_2 \in \Lambda^{\delta_0}$ in $\R^N$ as $\eps \to 0^+$, and
$$
Lz_{2} +V(y_2)z_2 = f(y_2, |z_2|)z_2.
$$
Furthermore, $\|z_2\| \geq c_p$.

We now define that
$$
z_{\eps, 2}(t, x):=z_{\eps, 1}-z_2(t -\tau_{\eps, 2}, x-y_{\eps, 2}).
$$
If $\|z_{\eps, 2}\|=o_{\eps}(1)$, then the proof is done. Otherwise, we have that $\lim_{\eps \to 0^+} \|z_{\eps, 2}\| >0$. Since $\bar{z}_{\eps, 1} \wto z_2$ in $E$ as $n \to \infty$, then
\begin{align*}
\|z_{\eps, 2}\|^2=\|z_{\eps, 1}\|^2-\|z_2\|^2+o_{\eps}(1).
\end{align*}
This, along with \eqref{lb1}, indicates that
$$
\|z_{\eps, 2}\|^2=\|z_{\eps}\|^2-\|z_1\|^2-\|z_2\|^2+o_{\eps}(1).
$$
Applying the same arguments as before, we can derive that there exists a sequence $\{(\tau_{\eps, 3}, \, y_{\eps, 3})\} \subset \R \times \R^N$ such that $\eps y_{\eps, 3} \to y_3 \in \Lambda^{\delta_0}$ in $\R^N$ as $\eps \to 0^+$, and for any $1 \leq k_1 \neq k_2 \leq 3$,
$$
\left|\tau_{\eps, k_1} - \tau_{\eps, k_2}\right| \to \infty \, \, \mbox{or} \,\, \left|y_{\eps, k_1}-y_{\eps, k_2}\right| \to \infty \quad \mbox{as} \,\, \eps \to 0^+.
$$
Define
$$
\bar{z}_{\eps, 2}(t, x):=z_{\eps, 2}(t+ \tau_{\eps, 3}, x+ y_{\eps, 3}),
$$
then $ \bar{z}_{\eps, 2} \wto z_3 \neq 0$ in $E$ as $\eps \to 0^+$, and
$$
Lz_{3} +V(y_3)z_3 = f(y_3, |z_3|)z_3.
$$
Furthermore, $\|z_3\| \geq c_p$.

By iterating $m$ times, we are able to obtain $m$ sequences $\{(\tau_{\eps, 1},\, y_{\eps, 1})\}, \cdots, \{(\tau_{\eps, m},\, y_{\eps, m})\} \subset \R \times \R^N$ such that $\eps y_{\eps, k} \to y_k \in \Lambda^{\delta_0}$ in $\R^N$ as $\eps \to 0^+$ for any $1 \leq k \leq m$ and
$$
\left|\tau_{\eps, k_1} - \tau_{\eps, k_2}\right| \to \infty \, \, \mbox{or} \,\, \left|y_{\eps, k_1}-y_{\eps, k_2}\right| \to \infty \,\, \mbox{as} \,\, \eps \to 0^+ \quad \mbox{for any} \,\,1 \leq k_1 \neq k_2 \leq m.
$$
There also exist $m$ nontrivial functions $z_1, \cdots, z_m \in E$ such that, for any $1 \leq k \leq m$, $\|z_k\| \geq c_p$ and
$$
Lz_k +V(y_k)z_k = f(y_k, |z_k|)z_k.
$$
In addition,
$$
0 \leq \|z_{\eps}\|^2-\sum_{k=1}^{m} \|z_k\|^2+o_{\eps}(1).
$$
Since, for any $1 \leq k \leq m$, $\|z_k\| \geq c_p$, and $\|z_{\eps}\| \leq c_2$, see Lemma \ref{unibdd}, then the procedure has to terminate at some finite index $m$ with $\|z_{\eps, m}\|=o_{\eps}(1)$, and the proof is completed.
\end{proof}

Let $\{\eps_n\} \subset \R^+$ be such that $\eps_n=o_n(1)$, and assume that $\lim_{n \to \infty} \eps_n y_{\eps_n ,k}$ exists for any $1 \leq k \leq m$. We write
$$
\left\{x_1, x_2, \cdots, x_{\tilde{m}}\right\}:=\left\{\lim_{n \to \infty} \eps_n y_{\eps_n ,k}: k=1,2, \cdots, m\right\},
$$
where $1\leq \tilde{m} \leq m$, and $x_{k_1} \neq x_{k_2}$ for any $1\leq k_1 \neq k_2 \leq \tilde{m}$. Define
\begin{align*}
\nu:=\left\{
\begin{aligned}
&\frac {1}{10} \mbox{min} \left\{|x_{k_1}-x_{k_2}|: 1\leq k_1 \neq k_2 \leq \tilde{m}\right\}, & \tilde{m} \geq 2,\\
&\infty, &\tilde{m}=1.
\end{aligned}
\right.
\end{align*}

\begin{lem} \label{decay}
Let $0<\delta< \nu$, then there exist $c>0$ and $C>0$ such that, for any $n \in \mathbb{N}^+$ large,
$$
\int_{\R}\int_{\mathcal{D}_{n, k}}  |\nabla z_{\eps_n}|^2 + |z_{\eps_n}|^2 \, dtdx \leq  C \, \textnormal{exp}\left(-c \, \eps_n^{-1}\right),
$$
\end{lem}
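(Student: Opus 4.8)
The plan is a Caccioppoli/Agmon type energy iteration over $O(\eps_n^{-1})$ nested spherical shells centred at $y_{\eps_n,k}$, which upgrades the merely $o_{\eps_n}(1)$ smallness of $z_{\eps_n}$ away from the concentration centres into an exponential bound. First I would recast \eqref{equation} as the \emph{linear} system $Lz_{\eps_n}+W_{\eps_n}z_{\eps_n}=0$ with effective potential $W_{\eps_n}(t,x):=V_{\eps_n}(x)-f_{\eps_n}(x,|z_{\eps_n}(t,x)|)$, and show that $\|z_{\eps_n}\|_{L^\infty}$ is small on a whole neighbourhood of $\mathcal{D}_{n,k}$. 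By Lemma \ref{sl}, $z_{\eps_n}=\sum_{j=1}^m z_j(\cdot-\tau_{\eps_n,j},\cdot-y_{\eps_n,j})+w_{\eps_n}$ with $\|w_{\eps_n}\|_E\to0$; each $z_j$ solves an autonomous system, so by Lemma \ref{bddbr} and Corollary \ref{estimate} it lies in $B^q$ for all $q$, hence is continuous and vanishes at spatial infinity uniformly in $t$. The error $w_{\eps_n}$ solves a linear parabolic system $Lw_{\eps_n}+V_{\eps_n}w_{\eps_n}=\sigma_{\eps_n}$ with $\|\sigma_{\eps_n}\|_{L^q}\to0$ (using $\|w_{\eps_n}\|_E\to0$, the uniform $L^\infty$-bound of Lemma \ref{bddbr}, the continuity of $V$ and of $f(\cdot,s)$, the asymptotic orthogonality in part (i) of Lemma \ref{sl}, and Lemma \ref{embedding}); then Corollary \ref{estimate} applied componentwise (after reversing time in the backward equation), combined with interpolation against the uniform $B^q$-bounds, yields $\|w_{\eps_n}\|_{L^\infty}\to0$. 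Since $\eps_n y_{\eps_n,k}\to y_k$ and the distinct limit points of $\{\eps_n y_{\eps_n,j}\}_j$ are $\ge 10\nu$ apart, the annulus $\mathcal{O}_{n,k}:=\{x:\delta/(2\eps_n)\le|x-y_{\eps_n,k}|\le 3\nu/\eps_n\}$ contains no centre $y_{\eps_n,j}$ for $n$ large and every point of it is at distance $\gtrsim\eps_n^{-1}$ from all the centres; hence $\|z_{\eps_n}\|_{L^\infty(\R\times\mathcal{O}_{n,k})}\to0$, so by $(H_1)$ and \eqref{tildeg}--\eqref{deff} one has $0\le f_{\eps_n}(x,|z_{\eps_n}|)<\mu=\tfrac12(1-\|V\|_\infty)$ on $\R\times\mathcal{O}_{n,k}$, and therefore $|W_{\eps_n}|<\theta:=\tfrac12(1+\|V\|_\infty)<1$ there.

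Next I would establish the basic localised energy inequality on $\mathcal{O}_{n,k}$. Fix a smooth cut-off $\psi=\psi(x)$ with $\operatorname{supp}\psi\subset\mathcal{O}_{n,k}$; since $\psi$ is $t$-independent, only the $-\Delta_x$ part of $L$ produces a commutator, $[L,\psi]z=-\mathcal{J}_0\big((\Delta\psi)z+2\nabla\psi\cdot\nabla z\big)$, supported on $\operatorname{supp}\nabla\psi$. Let $\tilde W:=\eta W_{\eps_n}$, with $\eta$ a cut-off equal to $1$ on $\operatorname{supp}\psi$ and supported in $\mathcal{O}_{n,k}$, so that $\|\tilde W\|_\infty\le\theta$ and $\tilde W=W_{\eps_n}$ on $\operatorname{supp}\psi$; then $w:=\psi z_{\eps_n}\in E$ solves $(L+\tilde W)w=[L,\psi]z_{\eps_n}$. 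Testing against $w^+-w^-$ and using the orthogonal splitting $E=E^+\oplus E^-$ and the polar decomposition \eqref{polar} (which give $(Lw,w^+-w^-)_2=\|w^+\|^2+\|w^-\|^2=\|w\|^2$), Lemma \ref{l2} ($|(\tilde W w,w^+-w^-)_2|\le\theta\|w\|_2^2\le\theta\|w\|^2$), and the elementary bound $\|w\|_E^2\ge c(\|w\|_2^2+\|\nabla_x w\|_2^2)$ coming from $|L|^2=-\partial_t^2+(-\Delta_x+1)^2$ and Plancherel, I get $(1-\theta)\|\psi z_{\eps_n}\|_E^2\le\|[L,\psi]z_{\eps_n}\|_{L^2}\|\psi z_{\eps_n}\|_{L^2}\le\|[L,\psi]z_{\eps_n}\|_{L^2}\|\psi z_{\eps_n}\|_E$, hence, with all integrals taken in $(t,x)$ over $\R\times\R^N$,
\[
c\int_{\{\psi=1\}}\!\big(|\nabla z_{\eps_n}|^2+|z_{\eps_n}|^2\big)\,dtdx\le\frac{C\big(\|\nabla\psi\|_\infty^2+\|\Delta\psi\|_\infty^2\big)}{(1-\theta)^2}\int_{\operatorname{supp}\nabla\psi}\!\big(|\nabla z_{\eps_n}|^2+|z_{\eps_n}|^2\big)\,dtdx .
\]

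Then I would iterate this over nested shells. Set $\delta_-:=\delta/2$, fix a macroscopic length $\ell>0$, and for $j\ge0$ put $A_j:=\{x:\delta_-\eps_n^{-1}+j\ell\le|x-y_{\eps_n,k}|\le 2\nu\eps_n^{-1}-j\ell\}$; choose $\psi_j$ equal to $1$ on $A_j$ and $0$ outside the $\ell$-enlargement of $A_j$, so that $\operatorname{supp}\nabla\psi_j\subset\overline{A_{j-1}\setminus A_j}$, $\|\nabla\psi_j\|_\infty+\|\Delta\psi_j\|_\infty\le C\ell^{-1}$, and (for $n$ large) $\operatorname{supp}\psi_j\subset\mathcal{O}_{n,k}$. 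With $I_j:=\int_\R\int_{A_j}(|\nabla z_{\eps_n}|^2+|z_{\eps_n}|^2)\,dtdx$, the inequality above gives $c\,I_j\le C(1-\theta)^{-2}\ell^{-2}(I_{j-1}-I_j)$, i.e. $I_j\le\kappa\,I_{j-1}$ with $\kappa=\kappa(\ell,\|V\|_\infty,N)\in(0,1)$ independent of $n$ — there is no leftover source term precisely because the two-sided cut-off $\psi_j$ squeezes towards the middle annulus from both ends. Iterating $J:=\lfloor(\delta-\delta_-)(\ell\eps_n)^{-1}\rfloor$ times (still small enough that $\mathcal{D}_{n,k}\subset A_J$) and using $I_0\le C\|z_{\eps_n}\|_E^2\le C$ by Lemma \ref{unibdd}, I conclude $\int_\R\int_{\mathcal{D}_{n,k}}(|\nabla z_{\eps_n}|^2+|z_{\eps_n}|^2)\,dtdx\le I_J\le\kappa^J I_0\le C\exp(-c\,\eps_n^{-1})$ with $c=|\ln\kappa|(\delta-\delta_-)/\ell>0$.

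The hard part is the strong indefiniteness. Because the projections $z\mapsto z^\pm$ are nonlocal Fourier multipliers (Lemma \ref{lp}), $\psi z_{\eps_n}$ does not solve any localised equation, so one cannot run a comparison or Caccioppoli argument on the components of $z_{\eps_n}$ directly. The only route I see is the passage to $Lz_{\eps_n}+W_{\eps_n}z_{\eps_n}=0$ and the truncation $\tilde W$ of sup-norm $<1$, which is legitimate \emph{only} because $\|z_{\eps_n}\|_{L^\infty}$ is small on the shell region — that is the real content of the first step, and it rests on the concentration-compactness splitting (Lemma \ref{sl}) together with the parabolic interior estimates. A further delicate point is keeping the whole chain of shells clear of the other concentration centres: this works because $\mathcal{D}_{n,k}$ lies strictly between the macroscopic radii $\delta$ and $\nu$, with $\nu$ chosen (just before the statement) as one tenth of the minimal separation of the limit points, so the iteration has plenty of room. (This energy estimate, fed into the parabolic sup-estimates of Corollary \ref{estimate}, is what will eventually produce the pointwise exponential decay in Theorem \ref{theorem}.)
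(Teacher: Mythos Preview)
Your proposal is correct and leads to the same exponential bound, but the route differs from the paper's in two places worth flagging.

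For the $L^\infty$ smallness on the shell, the paper does something more economical than your error-term analysis: it simply observes from \eqref{bl} that $\int_\R\int_{\mathcal N_{n,k}}|z_{\eps_n}|^2\,dtdx=o_n(1)$ on a unit-neighbourhood of the annulus (because every centre $y_{\eps_n,j}$ is at spatial distance $\gtrsim\eps_n^{-1}$ from it, so each translated profile contributes $o(1)$, and the remainder is small in $E\hookrightarrow L^2$), interpolates with the uniform $B^q$-bound of Lemma~\ref{bddbr} to get $L^q$ smallness, and then feeds this directly into Corollary~\ref{estimate}. There is no need to write an equation for $w_{\eps_n}$ or to estimate a source term $\sigma_{\eps_n}$.

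For the energy step, the paper avoids the spectral projections altogether via a \emph{time-reversal trick}: setting $\hat z_{\eps_n}(t,x):=(u_{\eps_n}(t,x),v_{\eps_n}(-t,x))$ turns \eqref{equation} into the genuinely forward parabolic system $\partial_t\hat z_{\eps_n}-\Delta\hat z_{\eps_n}+\hat z_{\eps_n}=h$, and one then multiplies by $\psi_{n,l}^2\hat z_{\eps_n}$ and integrates. The $\partial_t$ term vanishes because $\psi_{n,l}$ is $t$-independent, and the Caccioppoli inequality $a_{n,l}\le\theta\,a_{n,l-1}$ drops out with no appeal to $E^\pm$, commutators $[L,\psi]$, or truncated potentials $\tilde W$. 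Your approach---testing $\psi z_{\eps_n}$ against $(\psi z_{\eps_n})^+-(\psi z_{\eps_n})^-$ and using the polar decomposition to obtain $(Lw,w^+-w^-)_2=\|w\|_E^2$---is perfectly valid and more intrinsic to the functional-analytic setup, but the time-reversal is what lets the paper treat the strongly indefinite operator as if it were elliptic/parabolic at this stage; it is the simpler of the two and you may find it useful elsewhere.
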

where $1 \leq k \leq m$, and
$$
\mathcal{D}_{n, k}:=\overline{B(y_{\eps_n, k}, \, \delta \eps_n^{-1}+2)} \setminus B(y_{\eps_n, k}, \, \delta \eps_n^{-1} -2).
$$
\begin{proof}
To prove this, we shall make use of the iteration technique developed in \cite{ChWa}. Let us define that
$$
A_{n, k}:=\overline{B(y_{\eps_n, k}, \, \frac 32\delta \eps_n^{-1})}\setminus B(y_{\eps, k}, \, \frac 12 \delta \eps_n^{-1}).
$$
By the definition of $\nu$, then, for any $0<\delta <\nu$,
\begin{align} \label{distlimit}
\mbox{dist}\left(y_{\eps_n, k'}, \, A_{n, k}\right) \to \infty \,\, \mbox{as} \,\, n \to \infty \quad \mbox{for any} \,\,1 \leq k', k \leq m.
\end{align}
Notice that, for any $1 \leq k \leq m$,
\begin{align} \label{inftylimit}
\lim_{R \to \infty}\int_{\R}\int_{\R^N \setminus B(y_{\eps_n, k}, \, R)} |z_k(\cdot,\, \cdot-y_{\eps_n, k})|^2 \, dtdx =o_n(1).
\end{align}
Define
$$
\mathcal{N}_{n, k}:=\left\{x \in \R^N: \textnormal{dist}(x, \, A_{n, k}) \leq 1\right\}.
$$
From \eqref{bl}, \eqref{distlimit} and \eqref{inftylimit}, we then have that
\begin{align} \nonumber
\int_{\R} \int_{\mathcal{N}_{n, k}} |z_{\eps_n}|^2 \, dtdx
&= \int_{\R}\int_{\mathcal{N}_{n, k}} |z_{\eps_n}- \sum_{k=1}^m z_k(\cdot-\tau_{\eps_n, k}, \cdot-y_{\eps_n, k}) + \sum_{k=1}^m z_k(\cdot -\tau_{\eps_n, k}, \cdot-y_{\eps_n, k}) |^2 \, dtdx \\ \nonumber
& \leq  2\int_{\R}\int_{\mathcal{N}_{n, k}}|z_{\eps_n}- \sum_{k=1}^m z_k(\cdot-\tau_{\eps_n, k}, \cdot-y_{\eps_n, k})|^2 \, dtdx \\ \label{estii}
& \quad + 2\int_{\R}\int_{\mathcal{N}_{n, k}}|\sum_{k=1}^m z_k(\cdot-\tau_{\eps_n, k}, \cdot-y_{\eps_n, k}) |^2 \, dtdx \\ \nonumber
& = 2\int_{\R}\int_{\mathcal{N}_{n, k}}|z_{\eps_n}- \sum_{k=1}^m z_k(\cdot-\tau_{\eps_n, k}, \cdot-y_{\eps_n, k})|^2 \, dtdx \\ \nonumber
& \quad + 2\int_{\R}\int_{\mathcal{N}_{n, k}}|\sum_{k=1}^m z_k(\cdot, \cdot-y_{\eps_n, k}) |^2 \, dtdx \\ \nonumber
&=o_n(1).
\end{align}
According to Lemma \ref{bddbr}, for any $n \in \mathbb{N}$ large, we know that $\|z_{\eps_n}\|_{B^q} \leq C$ for any $q \geq 2$. By H\"older's inequality, we then get from \eqref{estii} that
\begin{align} \label{esti00}
\int_{\R} \int_{\mathcal{N}_{n, k}} |z_{\eps_n}|^q \, dtdx =o_n(1) \quad \mbox{for any} \,\, q >2.
\end{align}
Define
$$
\hat{z}_{\eps_n}(t, x):=\left(u_{\eps_n}(t, x), v_{\eps_n}(-t, x)\right).
$$
Since, for any $n \in \mathbb{N}^+$ large, $z_{\eps_n}$ is a ground state to \eqref{equation}, then
\begin{align}\label{hatzepsn}
\partial_{t} \hat{z}_{\eps_n}-\Delta \hat{z}_{\eps_n} + \hat{z}_{\eps_n}=h,
\end{align}
where $h:=(h_1, h_2)$ with
$$
h_1(t, x):=-V_{\eps_n}(x)  v_{\eps_n}(t, x)+f_{\eps_n}(x, |z_{\eps_n}(t ,x)|)v_{\eps_n}(t, x),
$$
and
$$
h_2(t, x):=-V_{\eps_n}(x) u_{\eps_n}(-t, x) + f_{\eps_n}(x, |z_{\eps_n}(-t, x)|)u_{\eps_n}(-t, x).
$$
It then follows from Corollary \ref{estimate} and \eqref{esti00} that, for any $\gamma>0$, there exists $N \in \mathbb{N}^+$ such that, for any $n \geq N$,
\begin{align}\label{small}
|\hat{z}_{\eps_n}(t, x)| \leq \gamma \quad \mbox{for any}\,\, t \in \R, x \in A_{n, k}.
\end{align}

For any ${l} \in \mathbb{N}^+$,  we now define that
$$
\mathcal{A}_{n, l}:=\overline{B(y_{\eps_n, k}, \frac 32\delta \eps_n^{-1}-l)} \setminus B(y_{\eps_n, k}, \, \frac 12 \delta \eps_n^{-1} +l).
$$
Let $\zeta_l \in C^{\infty}(\R, [0, 1])$ be a cut-off function with $|\zeta_l '(\tau)| \leq 4$ for any $\tau \in \R$, and
\begin{align*}
\zeta_{n, l}(\tau):=\left\{
\begin{aligned}
&0, \quad \tau \leq  \frac 12 \delta \eps_n^{-1} +l-1 \,\, \mbox{or} \,\, \tau \geq \frac 32 \delta \eps_n^{-1}-l +1, \\
&1, \quad \frac 12 \delta \eps_n^{-1}+l \leq \tau \leq \frac 32 \delta \eps_n^{-1}-l.
\end{aligned}
\right.
\end{align*}
For any $x \in \R^N$, we define that $\psi_{n, l}(x):=\zeta_{n, l}(|x-y_{\eps_n, k}|)$. Taking the scalar product to \eqref{hatzepsn} with $\psi_{n, l}^2 \hat{z}_{\eps_n}$ and integrating on $\R \times \R^N$, we obtain that
\begin{align} \label{integrate}
\begin{split}
&\int_{\R}\int_{\R^N} \partial_t \hat{z}_{\eps_n} \cdot \hat{z}_{\eps_n} \psi_{n, l}^2 \, dtdx -\int_{\R} \int_{\R^N} \Delta \hat{z}_{\eps_n} \cdot \hat{z}_{\eps_n}\psi_{n, l}^2  \, dtdx + \int_{\R} \int_{\R^N} |\hat{z}_{\eps_n}|^2 \psi_{n, l}^2 \, dtdx \\
&= \int_{\R} \int_{\R^N} h \cdot \hat{z}_{\eps_n} \psi_{n, l}^2 \, dtdx.
\end{split}
\end{align}
Note that
$$
\int_{\R}\int_{\R^N} \partial_t \hat{z}_{\eps_n} \cdot \hat{z}_{\eps_n} \psi_{n, l}^2 \, dtdx =\frac 12 \int_{\R} \partial_t \int_{\mathcal \R^N} |\hat{z}_{\eps_n}|^2 \psi_{n, l}^2 \, dxdt =0,
$$
and
$$
-\int_{\R} \int_{\R^N} \Delta \hat{z}_{\eps_n} \cdot \hat{z}_{\eps_n}\psi_{n, l}^2  \, dtdx = \int_{\R} \int_{\mathcal{A}_{n, l-1}} |\nabla \hat{z}_{\eps_n}|^2 \psi_{n, l}^2 \, dtdx + 2\int_{\R} \int_{\mathcal{A}_{n, l-1}} \left(\nabla \hat{z}_{\eps_n} \cdot \nabla \psi_{n, l}\right) \cdot\left(\hat{z}_{\eps} \psi_{n, l}\right)\, dtdx.
$$
Since, for any $l \in \mathbb{N}^+$, $\mathcal{A}_{n ,l} \subset A_{n, k}$, and $\|V\|_{\infty} <1$, it then follows from \eqref{small} that there exists $0<\beta<1$ such that, for any $n \in \mathbb{N}^+$ large,
$$
\int_{\R}\int_{\mathcal{A}_{n, l-1}} h\cdot \hat{z}_{\eps_n} \psi_{n, l}^2 \, dtdx \leq \beta \int_{\R} \int_{\mathcal{A}_{n, l-1}} |\hat{z}_{\eps_n}|^2 \psi_{n, l}^2 \, dtdx.
$$
Thus \eqref{integrate} implies that
\begin{align*}
\int_{\R} \int_{\mathcal{A}_{n, l-1}} |\nabla \hat{z}_{\eps_n}|^2 \psi_{n, l}^2 + \left(1-\beta \right) |\hat{z}_{\eps_n}|^2 \psi_{n, l}^2 \, dtdx &\leq -2\int_{\R} \int_{\mathcal{A}_{n, l-1}} \left(\nabla \hat{z}_{\eps_n} \cdot \nabla \psi_{n, l}\right) \cdot\left(\hat{z}_{\eps} \psi_{n, l}\right)\, dtdx \\
&\leq \hat{C} \int_{\R} \int_{\mathcal{A}_{n, l-1}\setminus \mathcal{A}_{n, l}} |\nabla \hat{z}_{\eps_n}|  |\hat{z}_{\eps}|\, dtdx.
\end{align*}
Observe that $\mathcal{A}_{n, l} \subset \mathcal{A}_{n, l-1}$, then there is $\hat{c}>0$ such that
$$
\int_{\R} \int_{\mathcal{A}_{n, l}} |\nabla \hat{z}_{\eps_n}|^2  + |\hat{z}_{\eps_n}|^2 \, dtdx
\leq   \hat{c} \int_{\R} \int_{\mathcal{A}_{n, l-1}\setminus \mathcal{A}_{n, l}} |\nabla \hat{z}_{\eps_n}|^2 + |\hat{z}_{\eps}|^2  \, dtdx.
$$
This gives that $a_{n, l} \leq \hat{c} \left(a_{n, l-1}-a_{n, l} \right)$, where
$$
a_{n, l}:=\int_{\R} \int_{\mathcal{A}_{n, l}} |\nabla \hat{z}_{\eps_n}|^2  + |\hat{z}_{\eps_n}|^2  \, dtdx.
$$
Hence  $a_{n, l} \leq \theta a_{n, l-1}$ for $\theta:=\frac{\hat{c}}{\hat{c}+1}<1$, from which we get that $a_l \leq \theta^l a_0$, where
$$
a_{n, 0}:=\int_{\R} \int_{{A}_{n, k}} |\nabla \hat{z}_{\eps_n}|^2  + |\hat{z}_{\eps_n}|^2  \, dtdx.
$$
Recall that $\{z_{\eps_n}\}$ is bounded in $E$, see Lemma \ref{unibdd}, then $a_l \leq \bar{c}\,\theta^l=\bar{c}\,e^{l \ln{\theta}}$ for some $\bar{c}>0$. Taking $l=[\frac 12 \delta \eps_n^{-1}]-2$, and letting $n \in \mathbb{N}^+$ large if necessary such that
$$
\left[\frac 12 \delta \eps_n^{-1}\right]-2 \geq \frac 14 \delta \eps_n^{-1},
$$
we then obtain that
\begin{align*} 
\int_{\R}\int_{\mathcal{D}_{n, k}} |\nabla \hat{z}_{\eps_n}|^2  + |\hat{z}_{\eps_n}|^2 \, dtdx \leq a_{n, l} &\leq c \, \mbox{exp}\left(\left(\left[\frac 12 \delta \eps_n^{-1}\right]-2\right) \ln{\theta}\right) \\
& \leq c\, \mbox{exp}\left(\left(\frac 14 \delta \eps_n^{-1}\right) \ln{\theta}\right),
\end{align*}
where $[ r ]$ denotes the integer part of a real number $r$. Thus we have finished the proof.
\end{proof}

\begin{lem} \label{concen}
For any $1 \leq k \leq m$, there holds that
$$
\lim_{\eps \to 0^+} \textnormal{dist}(\eps y_{\eps, k}, \, \mathcal{V})=0,
$$
where $\mathcal{V}$ is defined by \eqref{defv}.
\end{lem}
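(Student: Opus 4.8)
The plan is to argue by contradiction, exploiting a localized Pohozaev‑type identity anchored to the exponential decay of Lemma~\ref{decay}. Suppose the assertion fails for some index $k$. Along a sequence $\eps_n\to0^+$ with $\textnormal{dist}(\eps_n y_{\eps_n,k},\mathcal V)\geq\alpha>0$, pass to a subsequence so that $\eps_n y_{\eps_n,j}\to y_j$ for every $j$; by Lemma~\ref{sl} each $y_j\in\Lambda^{\delta_0}$ and the corresponding profile $z_j\neq0$ solves $Lz+V(y_j)z=f(y_j,|z|)z$. Having fixed these limits (hence $\nu$), choose $\delta\in(0,\nu)$ additionally small with respect to the geometry of $\partial\Lambda$ near $y_k$, keep Lemma~\ref{decay} in force for this $\delta$, and let $\phi_k\in C_0^\infty(\R^N,[0,1])$ be a cut‑off equal to $1$ on $B(y_{\eps,k},\delta\eps^{-1}-2)$ and vanishing outside $B(y_{\eps,k},\delta\eps^{-1}+2)$, with $|\nabla\phi_k|+|\Delta\phi_k|\leq C$ and $\nabla\phi_k,\Delta\phi_k$ supported in $\mathcal D_{n,k}$.

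For a fixed unit vector $e\in\R^N$ I test \eqref{equation} with $\phi_k\,\partial_e z_{\eps}$, which is admissible after differentiating \eqref{equation} and bootstrapping the parabolic regularity of Lemma~\ref{bddbr}. The quadratic term $\langle L z_{\eps},\phi_k\partial_e z_{\eps}\rangle$ is treated by writing $\phi_k\partial_e z_{\eps}=\partial_e(\phi_k z_{\eps})-(\partial_e\phi_k)z_{\eps}$, using the self‑adjointness of $L$, the commutation $L(\phi_k z_{\eps})=\phi_k L z_{\eps}-\mathcal J_0\bigl(2\nabla\phi_k\cdot\nabla z_{\eps}+(\Delta\phi_k)z_{\eps}\bigr)$, and the equation to replace $Lz_{\eps}$ by $f_{\eps}(x,|z_{\eps}|)z_{\eps}-V_{\eps}z_{\eps}$ where $\partial_e\phi_k\neq0$; this forces $\langle L z_{\eps},\phi_k\partial_e z_{\eps}\rangle$ to equal, up to terms supported on $\mathcal D_{n,k}$ involving only $z_{\eps}$ and $\nabla z_{\eps}$, its own negative, hence to be $O(e^{-c/\eps})$ by Lemma~\ref{decay}. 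For the potential term one integrates by parts, $\langle V_{\eps}z_{\eps},\phi_k\partial_e z_{\eps}\rangle=-\tfrac\eps2\int(e\cdot\nabla V)(\eps x)\phi_k|z_{\eps}|^2+O(e^{-c/\eps})$; for the nonlinear term one uses $\partial_{x_i}\bigl(F(\eps x,|z_{\eps}|)\bigr)=\eps(\partial_{x_i}^{(1)}F)(\eps x,|z_{\eps}|)+f_{\eps}(x,|z_{\eps}|)|z_{\eps}|\partial_i|z_{\eps}|$ together with $(\nabla^{(1)}F)(x,s)=-\nabla\chi(x)\bigl(G(s)-\tilde G(s)\bigr)$ to get $\langle f_{\eps}(x,|z_{\eps}|)z_{\eps},\phi_k\partial_e z_{\eps}\rangle=\eps\int(e\cdot\nabla\chi)(\eps x)\bigl(G(|z_{\eps}|)-\tilde G(|z_{\eps}|)\bigr)\phi_k+O(e^{-c/\eps})$, the uniform bounds $z_{\eps}\in B^q$ (Lemma~\ref{bddbr}, Corollary~\ref{estimate}) being used to control $F_{\eps}$ and $f_{\eps}$ on $\mathcal D_{n,k}$. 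Dividing by $\eps$ gives
\begin{align*}
\frac12\int_{\R}\int_{\R^N}(e\cdot\nabla V)(\eps x)\,\phi_k|z_{\eps}|^2\,dtdx
+\int_{\R}\int_{\R^N}(e\cdot\nabla\chi)(\eps x)\bigl(G(|z_{\eps}|)-\tilde G(|z_{\eps}|)\bigr)\phi_k\,dtdx=o(1).
\end{align*}

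Since $z_k\neq0$, Lemma~\ref{sl} yields $\liminf_{\eps\to0^+}\int_{\R}\int_{\R^N}\phi_k|z_{\eps}|^2\,dtdx\geq m_k>0$, while $\eps x\to y_k$ uniformly on $\mathrm{supp}\,\phi_k$. If $y_k\in\partial\Lambda$ or $y_k\in\Lambda^{\delta_0}\setminus\overline\Lambda$, take $e=\mathbf{n}(y_k)$, respectively $e=\nabla\textnormal{dist}(y_k,\Lambda)$: by $(V_2)$, respectively \eqref{vloc}, and continuity one has $(e\cdot\nabla V)(\eps x)\geq c_0>0$ on $\mathrm{supp}\,\phi_k$ for $\eps$ small (here the smallness of $\delta$ is used), whereas $(e\cdot\nabla\chi)(\eps x)=\zeta'(\textnormal{dist}(\eps x,\Lambda))\,\bigl(e\cdot\nabla\textnormal{dist}(\eps x,\Lambda)\bigr)\geq0$ there and $G-\tilde G\geq0$; the displayed identity then forces $\tfrac{c_0}{2}m_k\leq o(1)$, a contradiction. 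Hence $y_k\in\Lambda$. Choosing now also $\delta<\textnormal{dist}(y_k,\partial\Lambda)$, the function $\chi$ vanishes on $\mathrm{supp}\,\phi_k$ for $\eps$ small, so the second integral drops out and the first gives $(e\cdot\nabla V)(y_k)=0$ for every unit $e$, i.e.\ $\nabla V(y_k)=0$, so $y_k\in\mathcal V$ — contradicting $\textnormal{dist}(y_k,\mathcal V)\geq\alpha$. This proves the lemma.

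The main obstacle is establishing the localized identity with remainders that are genuinely exponentially small: every commutator term produced by the cut‑off $\phi_k$ must be confined to the annulus $\mathcal D_{n,k}$ and absorbed by Lemma~\ref{decay} (controlling $z_{\eps}$ and $\nabla z_{\eps}$ there) together with the uniform $B^q$‑bounds of Lemma~\ref{bddbr} (controlling $F_{\eps}(x,|z_{\eps}|)$ and $f_{\eps}(x,|z_{\eps}|)$ there). In particular the quadratic term $\langle Lz_{\eps},\phi_k\partial_e z_{\eps}\rangle$ has to be handled via the self‑adjointness of $L$ rather than a direct integration by parts, because the reverse‑diffusion structure of $L$ obstructs the naive computation; a secondary point is justifying that $\partial_e z_{\eps}\in E$, which follows from differentiating \eqref{equation} and the regularity of Lemma~\ref{bddbr}.
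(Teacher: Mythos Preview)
Your approach is correct and very close in spirit to the paper's: both arguments localize a Pohozaev-type identity to the bump at $y_{\eps,k}$, exploit Lemma~\ref{decay} to kill the boundary/commutator terms, and read off a contradiction from the sign of the $\nabla V$ contribution together with the sign of the $\nabla\chi$ contribution coming from \eqref{vloc}. The differences are mainly packaging. You use a smooth cut-off $\phi_k$ and recover the identity with remainders living on $\mathcal D_{n,k}$; the paper instead integrates over the hard ball $B(y_{\eps_n,k_0},\tau\delta\eps_n^{-1})$, obtains genuine boundary integrals $I_1,\dots,I_5$ on the sphere, and then integrates over $\tau\in[1-2\eps_n/\delta,\,1+2\eps_n/\delta]$ to turn these into volume integrals on the annulus, where Lemma~\ref{decay} applies. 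The paper also avoids your case split: it fixes the single direction $\bm{\nu}_n=\nabla V(\eps_n y_{\eps_n,k_0})$ (which is nonzero because any $y_{k_0}\in\Lambda^{\delta_0}\setminus\mathcal V$ automatically satisfies $\nabla V(y_{k_0})\neq0$ by $(V_2)$ and \eqref{vloc}), and the same direction makes \emph{both} the $\nabla V$ term strictly positive and the $\nabla\chi$ term nonnegative, in one shot. Your self-adjointness trick for $\langle Lz_\eps,\phi_k\partial_e z_\eps\rangle$ is a nice alternative to the paper's direct divergence-theorem computation on the system; as you note, the remainders it produces involve only $z_\eps$ and $\nabla z_\eps$, so Lemma~\ref{decay} alone suffices.

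One small inaccuracy to fix: the claim ``$\eps x\to y_k$ uniformly on $\mathrm{supp}\,\phi_k$'' is not true---$\eps\cdot\mathrm{supp}\,\phi_k$ only sits inside $B(y_k,\delta+o(1))$. This matters in your Case~2 conclusion ``the first gives $(e\cdot\nabla V)(y_k)=0$'', which does not follow for a fixed $\delta$. The correct finish (and essentially what the paper does) is to argue by contradiction: if $y_k\in\Lambda$ with $\nabla V(y_k)\neq0$, pick $e=\nabla V(y_k)/|\nabla V(y_k)|$ and then shrink $\delta$ so that $(e\cdot\nabla V)(\eps x)\geq\tfrac12|\nabla V(y_k)|$ on $\mathrm{supp}\,\phi_k$; your identity then yields $\tfrac14|\nabla V(y_k)|\,m_k\leq o(1)$, a contradiction. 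With this adjustment your proof goes through.
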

\begin{proof}
To prove this lemma, we argue by contradiction that there exist $1\leq k_0 \leq m$ and  a sequence $\{\eps_n\} \subset \R^+$ with $\eps_n=o_n(1)$ such that
$$
\lim_{n \to \infty} \mbox{dist}(\eps_n y_{\eps_n, k_0}, \, \mathcal{V}) >0.
$$
By Lemma \ref{sl}, we assume that $\eps_n y_{\eps_n, k_0} \to y_{k_0} \notin \mathcal{V}$ in $\R^N$ as $n \to \infty$,  then there is $\delta>0$ small such that, for any $n \in \mathbb{N}^+$ large,
\begin{align*}
\inf_{x \in B(y_{\eps_n, k_0},\, \delta \eps_n^{-1})} \nabla V(\eps_n x) \cdot \nabla V(\eps_n y_{\eps_n, k_0}) \geq \frac 12 |\nabla V(y_{k_0})|^2>0.
\end{align*}
Thus, for any $\tau \in [1-{2\eps_n/\delta}, \, 1+{2\eps_n/\delta}]$ and $n \in \mathbb{N}^+$ large,
\begin{align} \label{v}
\inf_{x \in B(y_{\eps_n, k_0},\, \tau \delta \eps_n^{-1})} \nabla V(\eps_n x) \cdot \nabla V(\eps_n y_{\eps_n, k_0}) \geq \frac 14 |\nabla V(y_{k_0})|^2>0.
\end{align}
We now set that
$$
{\bm \nu_n}:=\nabla V(\eps_n y_{\eps_n, k_0})=(\nu_{n, 1}, \nu_{n ,2}, \cdots, \nu_{n, N}), \quad w_{\eps_n}:=(v_{\eps_n}, \,u_{\eps_n} ).
$$
Recall that, for any $n \in \mathbb{N}^+$ large,
\begin{align} \label{equ}
Lz_{\eps_n} + V_{\eps_n}(x)z_{\eps_n}=f_{\eps_n}(x, |z_{\eps_n}|)z_{\eps_n}.
\end{align}
Taking the scalar product to \eqref{equ} with $\bm{\nu_{n}}\cdot \nabla w_{\eps_n} $ and integrating on $\R \times B(y_{\eps_n, k_0}, \, \tau\delta \eps_n^{-1})$, we then obtain that
\begin{align} \label{estimate1}
\begin{split}
&\int_{\R} \int_{B(y_{\eps_n, k_0}, \, \tau\delta \eps_n^{-1})} \left(Lz_{\eps_n} + V_{\eps_n}(x)z_{\eps_n}\right)\cdot \left({\bm \nu_{n}}\cdot \nabla w_{\eps_n}\right) \, dtdx \\
&=\int_{\R} \int_{B(y_{\eps_n, k_0}, \, \tau\delta \eps_n^{-1})} f_{\eps_n}(x, |z_{\eps_n}|) z_{\eps_n}\cdot \left({\bm \nu_{n}}\cdot \nabla w_{\eps_n} \right) \, dtdx.
\end{split}
\end{align}
In what follows, we shall calculate the terms in \eqref{estimate1} with the help of the divergence theorem. For the sake of convenience, let us introduce Einstein's summation convention on repeated indices. We assume that $1 \leq i \leq M$ and $1 \leq j \leq N$. Note first that
\begin{align*}
\int_{\R} \int_{B(y_{\eps_n, k_0}, \, \tau\delta \eps_n^{-1})} \partial_t u_{\eps_n} \cdot \left(\bm{\nu_n} \cdot \nabla v_{\eps_n} \right)\, dtdx
&=\int_{\R} \int_{B(y_{\eps_n, k_0}, \, \tau\delta \eps_n^{-1})} \partial_{t} u_{\eps_n, i} \, \partial_{j} v_{\eps_n, i} \, \nu_{n, j}\, dtdx \\
&=-\int_{\R} \int_{B(y_{\eps_n, k_0}, \, \tau\delta \eps_n^{-1})} \partial_{j} \partial_{t} v_{\eps_n, i} \, u_{\eps_n, i} \, \nu_{n, j}\, dtdx,
\end{align*}
from which we then get that
\begin{align*}
\int_{\R} \int_{B(y_{\eps_n, k_0}, \, \tau\delta \eps_n^{-1})} \partial_t u_{\eps_n} \cdot \left(\bm{\nu_n} \cdot \nabla v_{\eps_n} \right)\, dtdx
&=\int_{\R} \int_{B(y_{\eps_n, k_0}, \, \tau\delta \eps_n^{-1})}  \partial_{t} v_{\eps_n, i} \, \partial_{j}u_{\eps_n, i}\,  \nu_{n, j}\, dtdx \\
& \quad - \int_{\R} \int_{\partial B(y_{\eps_n, k_0}, \, \tau\delta \eps_n^{-1})} \partial_{t} v_{\eps_n, i} \, u_{\eps_n, i} \, \nu_{n, j} \, n_j\, dtdS,
\end{align*}
where $\bm{n}:=(n_1, n_2, \cdots, n_N)$ denotes the unit outward normal vector to $\partial B(y_{\eps_n, k_0}, \, \tau \delta \eps_n^{-1})$. As a consequence, we have that
\begin{align*} 
\int_{\R} \int_{B(y_{\eps_n, k_0}, \, \tau\delta \eps_n^{-1})} \partial_t u_{\eps_n} \cdot \left(\bm{\nu_n} \cdot \nabla v_{\eps_n} \right)\, dtdx -\int_{\R} \int_{B(y_{\eps_n, k_0}, \, \tau\delta \eps_n^{-1})} \partial_t v_{\eps_n} \cdot \left(\bm{\nu_n} \cdot \nabla u_{\eps_n} \right)\, dtdx
=I_1(\tau),
\end{align*}
where
\begin{align*}
 I_1(\tau):=- \int_{\R} \int_{\partial B(y_{\eps_n, k_0}, \, \tau \delta \eps_n^{-1})} \left( \partial_t v_{\eps_n} \cdot u_{\eps_n} \right) \left({\bm{\nu_n} \cdot \bm{n}}\right) \,dtdS.
\end{align*}
We next deal with the diffusion terms. By straightforward calculations, then
\begin{align}  \label{deltatm}
\begin{split}
&\int_{\R} \int_{B(y_{\eps_n, k_0}, \, \tau\delta \eps_n^{-1})} \Delta u_{\eps_n} \cdot \left(\bm{\nu_n} \cdot \nabla v_{\eps_n}\right) \, dtdx \\
&= \int_{\R} \int_{B(y_{\eps_n, k_0}, \, \tau\delta \eps_n^{-1})} \Delta u_{\eps_n, i} \, \partial_{j} v_{\eps_n, i} \, \nu_{n, j} \,dt dx \\
& = \int_{\R} \int_{\partial B(y_{\eps_n, k_0}, \, \tau\delta \eps_n^{-1})} \left(\nabla u_{\eps_n, i} \cdot \bm{n} \right) \,  \partial_{j} v_{\eps, i} \, \nu_{n, j} - \left( \nabla \partial_{j} v_{\eps_n, i} \cdot \bm{n}\ \right) \, u_{\eps_n, i} \, \nu_{n, j}  dt dS \\
& \quad + \int_{\R} \int_{B(y_{\eps_n, k_0}, \, \tau\delta \eps_n^{-1})} u_{\eps_n, i} \, \Delta \partial_{j} v_{\eps_n, i} \, \nu_{n, j} \, dt dx.
\end{split}
\end{align}
Observe that
\begin{align*}
&\int_{\R} \int_{B(y_{\eps_n, k_0}, \, \tau\delta \eps_n^{-1})} u_{\eps_n, i} \, \Delta \partial_{j}  v_{\eps_n, i} \, \nu_{n, j} \, dt dx \\
&= - \int_{\R} \int_{B(y_{\eps_n, k_0}, \, \tau\delta \eps_n^{-1})} \Delta v_{\eps_n, i}  \, \partial_{j} u_{\eps_n, i} \, \nu_{n, j}\, dt dx + \int_{\R} \int_{\partial B(y_{\eps_n, k_0}, \, \tau\delta \eps_n^{-1})} \Delta v_{\eps_n, i} \, u_{\eps_n, i} \, \nu_{n, j} \,  n_j \, dt dS.
\end{align*}
It then follows from \eqref{deltatm} that
\begin{align*} 
\int_{\R} \int_{B(y_{\eps_n, k_0}, \, \tau\delta \eps_n^{-1})} \Delta u_{\eps_n} \cdot \left(\bm{\nu_n} \cdot \nabla v_{\eps_n}\right) \, dtdx +\int_{\R} \int_{B(y_{\eps_n, k_0}, \, \tau\delta \eps_n^{-1})} \Delta v_{\eps_n} \cdot \left(\bm{\nu_n} \cdot \nabla u_{\eps_n}\right) \, dtdx
=I_2(\tau),
\end{align*}
where
\begin{align*}
I_2(\tau):&= \int_{\R} \int_{\partial B(y_{\eps_n, k_0}, \, \tau\delta \eps_n^{-1})} \left(\nabla u_{\eps_n, i} \cdot \bm{n} \right) \, \partial_{j} v_{\eps, i} \, \nu_{n, j}  - \left( \nabla \partial_{j} v_{\eps_n, i} \cdot \bm{n}\ \right) \, u_{\eps_n, i} \,  \nu_{n, j} \, dt dS\\
& \quad + \int_{\R} \int_{\partial B(y_{\eps_n, k_0}, \, \tau\delta \eps_n^{-1})} \left(\Delta v_{\eps_n} \cdot u_{\eps_n}\right)  \left(\bm{\nu}_n \cdot \bm{n} \right) \,dt dS.
\end{align*}
In addition, we can obtain that
\begin{align*} 
\int_{\R} \int_{B(y_{\eps_n, k_0}, \, \tau\delta \eps_n^{-1})} u_{\eps_n} \cdot \left(\bm{\nu} \cdot \nabla v_{\eps_n}\right) \, dtdx
+\int_{\R} \int_{B(y_{\eps_n, k_0}, \, \tau\delta \eps_n^{-1})} v_{\eps_n} \cdot \left(\bm{\nu} \cdot \nabla u_{\eps_n}\right) \, dtdx
=I_3(\tau),
\end{align*}
where
$$
I_3(\tau):=\int_{\R} \int_{\partial B(y_{\eps_n, k_0}, \, \tau\delta \eps_n^{-1})} \left(u_{\eps_n} \cdot v_{\eps_n}\right)  \left(\bm{\nu}_n \cdot \bm{n} \right)  \, dtdS.
$$
We are now ready to compute the potential terms. Notice that
\begin{align} \label{pterm1}
\begin{split}
&\int_{\R} \int_{B(y_{\eps_n, k_0}, \, \tau\delta \eps_n^{-1})} V_{\eps_n}(x) \, v_{\eps_n} \cdot \left(\bm{\nu} \cdot \nabla v_{\eps_n}\right) \, dtdx \\
&=\int_{\R}\int_{B(y_{\eps_n, k_0}, \, \tau\delta \eps_n^{-1})}  V_{\eps_n}(x)\, v_{\eps_n, i}  \, \partial_j v_{\eps_n, i} \, \nu_{n, j}\,dtdx \\
&= \frac 12  \int_{\R} \int_{B(y_{\eps_n, k_0}, \, \tau\delta \eps_n^{-1})} V_{\eps_n}(x)\, \partial_j \left( |v_{\eps_n, i}|^2\right) \, \nu_{n, j} \,dtdx  \\
&=-\frac {\eps_n}{2}\int_{\R} \int_{B(y_{\eps_n, k_0}, \, \tau\delta \eps_n^{-1})} \partial_j V(\eps_n x)\, |v_{\eps_n, i}|^2 \,\nu_{n, j} \,dtdx \\
&\quad + \frac 12 \int_{\R} \int_{\partial B(y_{\eps_n, k_0}, \, \tau\delta \eps_n^{-1})}  V_{\eps_n}(x)\, |v_{\eps_n, i}|^2\, \nu_{n, j} \, n_{j} \,dtdS.
\end{split}
\end{align}
Similarly, there holds that
\begin{align} \label{pterm2}
\begin{split}
&\int_{\R} \int_{B(y_{\eps_n, k_0}, \, \tau\delta \eps_n^{-1})} V_{\eps_n}(x) \, u_{\eps_n} \cdot \left(\bm{\nu} \cdot \nabla u_{\eps_n}\right) \, dtdx \\
&=-\frac {\eps_n}{2} \int_{\R} \int_{B(y_{\eps_n, k_0}, \, \tau\delta \eps_n^{-1})} \partial_j V(\eps_n x)\,|u_{\eps_n, i}|^2 \, \nu_{n, j} \,dtdx \\
&\quad + \frac 12 \int_{\R} \int_{\partial B(y_{\eps_n, k_0}, \, \tau\delta \eps_n^{-1})}  V_{\eps_n}(x)\, |u_{\eps_n, i}|^2 \, \nu_{n, j} \, n_{j} \,dtdS.
\end{split}
\end{align}
As a result of Lemma \ref{sl}, we know that $z_{\eps_n}(\cdot + \tau_{\eps_n, k_0},\, \cdot+ y_{\eps_n, k_0}) \wto z_{k_0} \neq 0$ in $E$ as $n \to \infty$. By Lemma \ref{embedding}, we then have that $z_{\eps_n}(\cdot + \tau_{\eps_n, k_0},\, \cdot+ y_{\eps_n, k_0}) \to z_{k_0}$ a.e. on $\R \times \R^N$ as $n \to \infty$. It then follows from \eqref{v} and Fatou's Lemma that, for any $n \in \mathbb{N}^+$ large,
\begin{align*}
& \int_{\R} \int_{B(y_{\eps_n, k_0}, \, \tau\delta \eps_n^{-1})} \partial_j V(\eps_n x) \, \left(|v_{\eps_n, i}|^2 + |u_{\eps_n, i}|^2 \right) \, \nu_{n, j}\,dtdx \\
&=\int_{\R} \int_{B(y_{\eps_n, k_0}, \, \tau\delta \eps_n^{-1})} \partial_j V(\eps_n x) \, \nu_{n, j}\, |z_{\eps_n}|^2 \,dtdx \\
&\geq \frac{|V(y_{k_0})|^2}{4}  \int_{\R} \int_{B(y_{\eps_n, k_0}, \, \tau\delta \eps_n^{-1})} |z_{\eps_n}|^2 \, dtdx \\
&=\frac{|V(y_{k_0})|^2}{4}  \int_{\R} \int_{B(0, \, \tau\delta \eps_n^{-1})} |z_{\eps_n}(t+ \tau_{\eps_n, k_0},\, x+ y_{\eps_n, k_0})|^2 \, dtdx \\
& \geq \frac{|V(y_{k_0})|^2}{8}  \int_{\R} \int_{\R^N} |z_{k_0}|^2 \, dtdx.
\end{align*}
Therefore, by using \eqref{pterm1} and \eqref{pterm2}, we get that, for any $n \in \mathbb{N}^+$ large,
\begin{align*} 
&\int_{\R} \int_{B(y_{\eps_n, k_0}, \, \tau\delta \eps_n^{-1})} V_{\eps_n}(x) \left( u_{\eps_n} \cdot \left(\bm{\nu} \cdot \nabla u_{\eps_n}\right) +  u_{\eps_n} \cdot \left(\bm{\nu} \cdot \nabla u_{\eps_n}\right) \right)\, dtdx \\
& \leq - \frac{\eps_n}{16} |V(y_{k_0})|^2 \int_{\R} \int_{\R^N} |z_{k_0}|^2 \, dtdx + I_4(\tau) ,
\end{align*}
where
$$
 I_4(\tau):=\frac 12\int_{\R} \int_{\partial B(y_{\eps_n, k_0}, \, \tau\delta \eps_n^{-1})}  V_{\eps_n}(x)\, |z_{\eps_n}|^2 \left(\bm{\nu}_n \cdot \bm{n} \right)\,dtdS.
$$
Finally, let us turn to treat the nonlinearity term. It is not difficult to see that
\begin{align*}
&\int_{\R} \int_{B(y_{\eps_n, k_0}, \, \tau\delta \eps_n^{-1})} f_{\eps_n}(x, |z_{\eps_n}|) z_{\eps_n}\cdot \left({\bm \nu_{n}}\cdot \nabla w_{\eps_n} \right) \, dtdx \\
&=\int_{\R} \int_{B(y_{\eps_n, k_0}, \, \tau\delta \eps_n^{-1})} f_{\eps_n}(x, |z_{\eps_n}|) \left(v_{\eps_n} \cdot \left(\bm{\nu} \cdot \nabla v_{\eps_n}\right) + u_{\eps_n} \cdot \left(\bm{\nu} \cdot \nabla u_{\eps_n}\right)\right) \, dtdx\\
&=\int_{\R} \int_{B(y_{\eps_n, k_0}, \, \tau\delta \eps_n^{-1})} f_{\eps_n}(x, |z_{\eps_n}|) \left(v_{\eps_n, i} \,\partial_j v_{\eps_n, i}+ u_{\eps_n, i} \,\partial_j u_{\eps_n, i}\right)\, \nu_{n ,j} \, dtdx \\
&= \int_{\R} \int_{B(y_{\eps_n, k_0}, \, \tau\delta \eps_n^{-1})}\left( \partial_j(F_{\eps_n}(x, |z_{\eps_n}|)) -\eps_n \partial_{j} F_x(\eps_n x, |z_{\eps_n}|)\right)\, \nu_{n, j} \, dtdx \\
&= \int_{\R} \int_{\partial B(y_{\eps_n, k_0}, \, \tau\delta \eps_n^{-1})} F_{\eps_n}(x, |z_{\eps_n}|) \,\nu_{n, j} \,n_j \, dtdS - \eps_n \int_{\R}\int_{B(y_{\eps_n, k_0}, \, \tau\delta \eps_n^{-1})}  \partial_{j} F_x(\eps_n x, |z_{\eps_n}|)\, \nu_{n, j} \, dtdx.
\end{align*}
Note that
\begin{align*}
\partial_{j} F_x(\eps_n x, |z_{\eps_n}|) \nu_{n, j}&= \partial_j \chi(\eps_n x) (\tilde{G}(|z_{\eps_n}|)-G(|z_{\eps_n}|)) \nu_{n, j} \\
&=\zeta'(\mbox{dist}(\eps_n x, \Lambda)) \,\partial_j \mbox{dist}(\eps_nx, \Lambda) \,\nu_{n, j} (\tilde{G}(|z_{\eps_n}|)-G(|z_{\eps_n}|)).
\end{align*}
If $ \eps_n x \in \Lambda$, then $\mbox{dist}(\eps_n x, \Lambda)=0$, this shows that $\partial_j F_x(\eps_n x, |z_{\eps_n}|) \nu_{n, j}=0$. If $\eps_n x  \in B(\eps_n y_{\eps_n,  k_0}, \,\tau \delta) \setminus \Lambda$, since $\tilde{G}(s) \leq G(s)$ and $\zeta'(s) \geq 0$ for any $s \geq 0$, it then yields from \eqref{vloc} that $ \partial_j F_x(\eps_n x, |z_{\eps_n}|)\,\nu_{n, j} \leq 0$.
Thus
\begin{align*} 
\int_{\R} \int_{B(y_{\eps_n, k_0}, \, \tau\delta \eps_n^{-1})} f_{\eps_n}(x, |z_{\eps_n}|) z_{\eps_n}\cdot \left({\bm \nu_{n}}\cdot \nabla w_{\eps_n} \right) \, dtdx  \geq  I_5(\tau),
\end{align*}
where
$$
I_5(\tau):=\int_{\R} \int_{\partial B(y_{\eps_n, k_0}, \, \tau\delta \eps_n^{-1})} F_{\eps_n}(x, |z_{\eps_n}|)  \left(\bm{\nu}_n \cdot \bm{n} \right)\, dtdS.
$$
From the arguments above, we then arrive at
\begin{align}\label{decaych}
I_1(\tau)-I_2(\tau)+I_3(\tau)+I_4(\tau)-I_5(\tau)\geq \frac{\eps_n}{16} |V(y_{k_0})|^2 \int_{\R} \int_{\R^N} |z_{k_0}|^2 \, dtdx.
\end{align}
Integrating \eqref{decaych} with respect to $\tau$ on $[1-{2\eps_n/\delta}, \, 1+{2\eps_n/\delta}]$, and applying \eqref{h1h2}, \eqref{equ},  Lemmas \ref{bddbr} and \ref{decay}, and H\"older's inequality,  we then deduce that there are $c>0$ and $C>0$ such that
$$
C \, \textnormal{exp}(-c\, \eps_n^{-1}) \geq \frac{\eps^2_n}{4 \delta} |V(y_{k_0})|^2 \int_{\R} \int_{\R^N} |z_{k_0}|^2,
$$
which is impossible for any $n \in \mathbb{N}^+$ large. Accordingly, the conclusion of the lemma holds, and the proof is completed.
\end{proof}

\begin{lem} \label{expdecay}
Let $\eps>0$ be small, then, for any $\delta>0$, there exist $c>0$ and $C>0$ such that
$$
|z_{\eps}(x, t)| \leq C \,\textnormal{exp}(-c \, \textnormal{dist} (x, \, (\mathcal{V}^{\delta})_{\eps})).
$$
\end{lem}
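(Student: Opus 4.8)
The idea is to combine the concentration description of $z_\eps$ from Lemmas~\ref{sl} and \ref{concen} with a uniform exterior smallness estimate and the iteration scheme of \cite{ChWa} already employed in Lemma~\ref{decay}. Since every constant produced below depends only on $\|V\|_\infty$, $N$, $p$ and the uniform bounds of Lemmas~\ref{unibdd} and \ref{bddbr}, it suffices to establish the estimate along an arbitrary sequence $\eps_n\to 0^+$. Passing to a subsequence, Lemmas~\ref{sl} and \ref{concen} give nontrivial profiles $z_1,\dots,z_m\in E$ and points $y_{\eps_n,1},\dots,y_{\eps_n,m}$ with $z_{\eps_n}-\sum_{k=1}^m z_k(\cdot-\tau_{\eps_n,k},\cdot-y_{\eps_n,k})=o_n(1)$ in $E$ and $\textnormal{dist}(\eps_n y_{\eps_n,k},\mathcal{V})=o_n(1)$, so that for any fixed $R>0$ the set $K_n:=\bigcup_{k=1}^m\overline{B(y_{\eps_n,k},R)}$ is contained in $(\mathcal{V}^\delta)_{\eps_n}$ once $n$ is large, because then $R+\tfrac{\delta}{2}\eps_n^{-1}<\delta\eps_n^{-1}$ and $\eps_n y_{\eps_n,k}\in\mathcal{V}^{\delta/2}$. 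The technical core is the claim that for every $\gamma>0$ there are $R_\gamma>0$ and $N_\gamma\in\mathbb{N}^+$ with $|z_{\eps_n}(t,x)|\le\gamma$ for all $t\in\R$ and all $x\notin\bigcup_k B(y_{\eps_n,k},R_\gamma)$ whenever $n\ge N_\gamma$: each $z_k$ lies in $B^q$ for all $q\ge 2$ by Lemma~\ref{regularity}, hence $|z_k(t,x)|\to 0$ as $|(t,x)|\to\infty$, while $z_{\eps_n}-\sum_k z_k(\cdot-\tau_{\eps_n,k},\cdot-y_{\eps_n,k})\to 0$ in $L^q$ for all $q\in[2,\infty)$; feeding this together with the uniform bound $\|z_{\eps_n}\|_{B^q}\le C$ of Lemma~\ref{bddbr} into the interior $C^0$-estimate of Corollary~\ref{estimate} — applied to each scalar component of the flipped function $\hat z_{\eps_n}(t,x):=(u_{\eps_n}(t,x),v_{\eps_n}(-t,x))$, which solves a system of the form \eqref{equu} with right-hand side $\hat h_n$ bounded by $(\|V\|_\infty+f_{\eps_n}(x,|z_{\eps_n}|))|z_{\eps_n}|$ evaluated at $t$ and at $-t$, exactly as in the proof of Lemma~\ref{decay} — upgrades the $L^q$-smallness to the asserted uniform pointwise smallness.

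Now fix $\gamma>0$ small enough that $\|V\|_\infty+f_\eps(x,s)\le 1-\beta$ for $0\le s\le\gamma$, with $\beta:=\tfrac12(1-\|V\|_\infty)>0$, which is possible by \eqref{h1h21}, and set $K_n:=\bigcup_k\overline{B(y_{\eps_n,k},R_\gamma)}$, so that on the exterior $\R\times(\R^N\setminus K_n)$ one has $|\hat h_n(t,x)|\le(1-\beta)\big(|z_{\eps_n}(t,x)|^2+|z_{\eps_n}(-t,x)|^2\big)^{1/2}$. Running the iteration of \cite{ChWa} as in Lemma~\ref{decay}: for $l\in\mathbb{N}^+$ put $\Sigma_{n,l}:=\{x\in\R^N:\textnormal{dist}(x,K_n)\ge l\}$, test the equation with $\psi_l^2\hat z_{\eps_n}$ for a cut-off $\psi_l$ supported in a unit neighbourhood of $\partial\Sigma_{n,l}$ and equal to $1$ on $\Sigma_{n,l}$, and absorb the reaction term using the exterior bound together with the invariance of the $t$-integral under $t\mapsto -t$; this yields $b_{n,l}\le\hat c\,(b_{n,l-1}-b_{n,l})$, whence $b_{n,l}\le\theta^l b_{n,0}$ with $\theta:=\hat c/(\hat c+1)\in(0,1)$, where $b_{n,l}:=\int_\R\int_{\Sigma_{n,l}}(|\nabla z_{\eps_n}|^2+|z_{\eps_n}|^2)\,dtdx$ and $b_{n,0}\le\|z_{\eps_n}\|^2\le c_2^2$ by Lemma~\ref{unibdd}. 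Hence $b_{n,l}\le C\mathrm{e}^{-cl}$ uniformly in $n$.

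To pass from this decay of local energy to the pointwise bound, fix $(t,x)$ with $r:=\textnormal{dist}(x,K_n)\ge 3$ and apply Corollary~\ref{estimate} on the unit cylinder $Q_2(t,x)$ to each component of $\hat z_{\eps_n}$, obtaining $|\hat z_{\eps_n}(t,x)|\le C\big(\|\hat h_n\|_{L^q(Q_2(t,x))}+\|\hat z_{\eps_n}\|_{L^q(Q_2(t,x))}\big)\le C\|z_{\eps_n}\|_{L^q(\R\times B(x,2))}$ for $q>(N+2)/2$; interpolating the last norm between $\|z_{\eps_n}\|_{L^2(\R\times B(x,2))}^2\le b_{n,\lfloor r-2\rfloor}\le C\mathrm{e}^{-cr}$ and $\|z_{\eps_n}\|_{L^{q'}(\R\times B(x,2))}\le\|z_{\eps_n}\|_{B^{q'}}\le C$ for some $q'>q$ gives $|\hat z_{\eps_n}(t,x)|\le C\mathrm{e}^{-c'r}$. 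Since $|z_{\eps_n}(t,x)|^2\le|\hat z_{\eps_n}(t,x)|^2+|\hat z_{\eps_n}(-t,x)|^2$ and $|z_{\eps_n}|\le C$ on $K_n$ by the uniform $L^\infty$-bound coming from Lemmas~\ref{unibdd} and \ref{bddbr}, we get $|z_{\eps_n}(t,x)|\le C\mathrm{e}^{-c'\,\textnormal{dist}(x,K_n)}$ for all $(t,x)$; combining this with $\textnormal{dist}(x,(\mathcal{V}^\delta)_{\eps_n})\le\textnormal{dist}(x,K_n)$, which holds because $K_n\subset(\mathcal{V}^\delta)_{\eps_n}$, we conclude $|z_{\eps_n}(t,x)|\le C\,\textnormal{exp}\big(-c'\,\textnormal{dist}(x,(\mathcal{V}^\delta)_{\eps_n})\big)$, and since the sequence was arbitrary the lemma follows. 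The main obstacle is the uniform exterior smallness claim of the first step, where the $E$-level concentration of $z_\eps$ must be turned into an $L^\infty$-smallness uniform both in $\eps$ and in $t$; the Moser-type interpolation of the last step is the only other point requiring care.
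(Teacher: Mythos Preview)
Your proof is correct and follows essentially the same strategy as the paper: establish uniform $L^\infty$ smallness of $z_{\eps_n}$ on an exterior region via the profile decomposition of Lemma~\ref{sl} together with Corollary~\ref{estimate}, then run the Chen--Wang iteration of Lemma~\ref{decay} on nested exterior level sets, and finally upgrade the resulting $L^2$ exponential decay to a pointwise bound via Corollary~\ref{estimate} and the uniform $B^q$ bounds. The only cosmetic difference is that you iterate outward from the finite union $K_n=\bigcup_k \overline{B(y_{\eps_n,k},R_\gamma)}$ and then invoke the inclusion $K_n\subset(\mathcal{V}^\delta)_{\eps_n}$, whereas the paper iterates directly from $(\mathcal{V}^\delta)_{\eps_n}$; your version is arguably a bit more transparent because it separates the analytic decay estimate from the geometric concentration information of Lemma~\ref{concen}. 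One small remark: your bound $b_{n,0}\le\|z_{\eps_n}\|^2$ is not quite what you want, since the $E$-norm is not literally $\int(|\nabla z|^2+|z|^2)$; the correct uniform bound on $b_{n,0}$ comes from $\|z_{\eps_n}\|_{B^2}\le C$ via Lemma~\ref{bddbr}, which you already cite elsewhere.
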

\begin{proof}
From Lemma \ref{concen}, for any $\eps>0$ small, we know that
$$
\mbox{dist}(\eps y_{\eps, k}, \, \R^N \setminus \mathcal{V}^{\delta}) \geq \frac{\delta}{2},
$$
which shows that
\begin{align*}
\mbox{dist}(y_{\eps, k}, \, \R^N \setminus (\mathcal{V}^{\delta})_{\eps}) \to \infty \,\,\mbox{as} \,\, \eps \to 0^+.
\end{align*}
Applying \eqref{bl}, and arguing as the proof of Lemma \ref{decay}, we have that
$$
\int_{\R} \int_{\mbox{dist}(x,\, \R^N \setminus (\mathcal{V}^{\delta})_{\eps}) \leq 1} |z_{\eps}(t, x)|^2 \, dtdx =o_{\eps}(1),
$$
from which and Corollary \ref{estimate} we are able to deduce that, for any $\gamma >0$, there exists $\tilde{\eps}>0$ such that, for any $0 <\eps < \tilde{\eps}$,
\begin{align*}
|z_{\eps}(t, x)| \leq \gamma \quad \mbox{for any}\,\, t \in \R, x \in  \R^N \setminus (\mathcal{V}^{\delta})_{\eps}.
\end{align*}
At this point, in order to complete the proof, it suffices to show that there is $R_0>0$ large such that
$$
|z_{\eps}(x, t)| \leq C \, \mbox{exp}(-c \, \mbox{dist}(x, \, (\mathcal{V}^{\delta})_{\eps})) \quad \mbox{for} \,\, \mbox{dist}(x,\, (\mathcal{V}^{\delta})_{\eps}) \geq R_0.
$$
To do this, we utilize the iteration arguments presented in the proof of Lemma \ref{decay}. For any $R \geq R_0$,  we define that
$$
\mathcal{V}_{\eps, l}:=\left\{x \in \R \setminus (\mathcal{V}^{\delta})_{\eps}: \mbox{dist}(x,\, (\mathcal{V}^{\delta})_{\eps})  \geq \frac {R}{2} + l\right\}.
$$
Let $\eta_l \in C^{\infty}(\R, [0, 1])$ be a cut-off function with $|\eta'(\tau)| \leq 4$ for any $\tau \in \R$, and
\begin{align*}
\eta_l(\tau):=\left\{
\begin{aligned}
&0,  \quad \tau \leq  \frac{R}{2}+l, \\
&1, \quad \tau \geq \frac{R}{2}+l +1.
\end{aligned}
\right.
\end{align*}
For any $x \in \R^N$,  we define that $\phi_{\eps, l}(x):=\eta_l(\mbox{dist}(x,\,(\mathcal{V}^{\delta})_{\eps}))$. Setting
$$
\hat{z}_{\eps}(t, x):=\left(u_{\eps}(t, x), v_{\eps}(-t, x)\right),
$$
we then know that
\begin{align}\label{hatzepsn1}
\partial_{t} \hat{z}_{\eps}-\Delta \hat{z}_{\eps} + \hat{z}_{\eps}=h,
\end{align}
where $h:=(h_1, h_2)$ with
$$
h_1(t, x):=-V_{\eps}(x)  v_{\eps}(t, x)+f_{\eps}(x, |z_{\eps}(t ,x)|)v_{\eps}(t, x),
$$
and
$$
h_2(t, x):=-V_{\eps}(x) u_{\eps}(-t, x) + f_{\eps}(x, |z_{\eps}(-t, x)|)u_{\eps}(-t, x).
$$
By taking the scalar product to \eqref{hatzepsn1} with $\phi_{\eps, l}^2 \hat{z}_{\eps}$, and integrating on $\R \times \R^N$, then
\begin{align*}
&\int_{\R}\int_{\R^N} \partial_t \hat{z}_{\eps} \cdot \hat{z}_{\eps} \phi_{\eps, l}^2 \, dtdx -\int_{\R} \int_{\R^N} \Delta \hat{z}_{\eps} \cdot \hat{z}_{\eps}\phi_{\eps, l}^2  \, dtdx + \int_{\R} \int_{\R^N} |\hat{z}_{\eps}|^2 \phi_{\eps, l}^2 \, dtdx \\
&= \int_{\R} \int_{\R^N} h \cdot \hat{z}_{\eps} \phi_{\eps, l}^2 \, dtdx.
\end{align*}
Using the same arguments as the proof of Lemma \ref{decay}, and letting $l=\left[R/2\right]-1$, we obtain that
$$
\int_{\R}\int_{\tilde{\mathcal{V}}} |\nabla \hat{z}_{\eps}|^2  + |\hat{z}_{\eps}|^2 \, dtdx \leq \tilde{C}\, \mbox{exp}\left( \frac{R}{3} \ln{\theta}\right),
$$
where $0<\theta<1$, and
$$
\tilde{\mathcal{V}}:=\left\{x \in \R \setminus (\mathcal{V}^{\delta})_{\eps}: \mbox{dist}(x,\, (\mathcal{V}^{\delta})_{\eps})  \geq R - 1\right\}.
$$
Thus, by Corollary \ref{estimate} and Lemma \ref{bddbr}, for any $R \geq R_0$ with $\mbox{dist}(x, \, (\mathcal{V}^{\delta})_{\eps})=R$,
\begin{align*}
|z_{\eps}(t, x)| \leq C \, \mbox{exp}\left( \frac{R}{3} \ln{\theta} \right)&=C \, \mbox{exp}\left( \frac{\ln{\theta}}{3} \,\mbox{dist}(x, \, (\mathcal{V}^{\delta})_{\eps})\right)\\
&=C \, \mbox{exp}\left(-c\,\mbox{dist}(x, \, (\mathcal{V}^{\delta})_{\eps})\right),
\end{align*}
where $c:=-\frac{\ln{\theta}}{3} $. Hence we have completed the proof.
\end{proof}

We are now in a position to establish Theorem \ref{theorem}.

\begin{proof}[Proof of Theorem \ref{theorem}] From $(V_2)$, we infer that $\mbox{dist}(\mathcal{V},\, \partial \Lambda) >0.$
For any $0<\delta <\mbox{dist}(\mathcal{V},\, \partial \Lambda)$, from Lemma \ref{expdecay}, we have that
\begin{align} \label{dec}
|z_{\eps}(t,x)| \leq C \, \mbox{exp} (-c \, \mbox{dist}(x, \, (\mathcal{V}^{\delta})_{\eps})).
\end{align}
If $t \in \R$ and $x \in \R^N \setminus \Lambda_{\eps}$, then $\mbox{dist}(x,\, (\mathcal{V}^{\delta})_{\eps}) \to \infty$ as $\eps \to 0^+$. Thus, for any $\eps>0$ small, it follows from \eqref{dec} that $g(z_{\eps}(t, x)) \leq \mu$ for any $t \in \R$ and $x \in \R^N \setminus \Lambda_{\eps}$. This in turn suggests that
$$
f_{\eps}(x, |z_{\eps}(t, x)|)=g(|z(t, x)|) \quad \mbox{for any} \, \, t \in \R, x \in \R^N \setminus \Lambda_{\eps}.
$$
If $t \in \R$  and $x \in \Lambda_{\eps}$, then $\chi(\eps x)=0$, which indicates that $f_{\eps}(x, |z(t, x)|)=g(|z(t, x)|)$. Hence, for any $\eps>0$ small,
$z_{\eps}$ is a ground state to \eqref{system1}. By making a change variable, from Lemma \ref{expdecay}, we obtain the decay of $z_{\eps}$. Thus the proof is completed.
\end{proof}

\par {\bf Acknowledgement}: The authors thank Dr. Quanguo Zhang for his valuable suggestions and comments on the manuscript.

\end{document}